\newtheoremstyle{theorem}{6pt}{6pt}{\itshape}{}{\bfseries}{.}{.5em}{}
\newtheoremstyle{definition}{6pt}{6pt}{\upshape}{}{\bfseries}{.}{.5em}{}
\theoremstyle{theorem}
\newtheorem{theorem}{Theorem}[section]
\newaliascnt{corollary}{theorem}
\newaliascnt{lemma}{theorem}
\newtheorem{lemma}[lemma]{Lemma}
\newaliascnt{sublemma}{theorem}
\theoremstyle{definition}
\newtheorem{remark}{Remark}[section]
\newaliascnt{proposition}{theorem}
\newtheorem{proposition}[proposition]{Proposition}
\renewcommand{\u}{{\bf u}}
\newcommand{\R}{{\mathbb R}}
\newcommand{\dif}{{\mathrm d}}
\newcommand{\bn}{\begin{eqnarray}}
\newcommand{\en}{\end{eqnarray}}
\newcommand{\bnn}{\begin{eqnarray*}}
\newcommand{\enn}{\end{eqnarray*}}
\renewcommand{\div}{ {\rm div }  }
\newcommand{\N}{\mathbb{N}}
\newcommand{\rL}{\mathrm{L}}
\newcommand{\T}{\mathbb{T}}
\newcommand{\al}{\alpha}
\newcommand{\frD}{\mathfrak{D}}
\newcommand{\calE}{\mathcal{E}}
\newcommand{\pt}{\partial_t}
\newcommand{\D}{\nabla}
\newcommand{\vr}{\varrho}
\numberwithin{equation}{section}
\newcommand\reallywidehat[1]{%
\savestack{\tmpbox}{\stretchto{%
  \scaleto{%
    \scalerel*[\widthof{\ensuremath{#1}}]{\kern-.6pt\bigwedge\kern-.6pt}%
    {\rule[-\textheight/2]{1ex}{\textheight}}
  }{\textheight}%
}{0.5ex}}%
\stackon[1pt]{#1}{\tmpbox}%
}
\begin{document}

\begin{frontmatter}

\title{Global Existence and Asymptotic Behavior of Large Strong Solutions \\ to the 3D Full Compressible Navier-Stokes Equations \\ with Density-dependent Viscosities}

\author[label1]{Yachun Li}
\address[label1]{School of Mathematical Sciences, CMA-Shanghai, MOE-LSC, and SHL-MAC,\\ Shanghai Jiao Tong University, Shanghai 200240, P.R.China;}
\ead{ycli@sjtu.edu.cn}

\author[label2]{Peng Lu}
\address[label2]{School of Mathematical Sciences, Shanghai Jiao Tong University, Shanghai 200240, P.R.China;}
\ead{lp95@sjtu.edu.cn}

\author[label3]{Zhaoyang Shang\corref{cor2}}
\address[label3]{School of Finance, Shanghai Lixin University of  Accounting and Finance, Shanghai 201209, P.R.China;}
\cortext[cor2]{Corresponding author. }
\ead{shangzhaoyang@sjtu.edu.cn}

\author[label2]{Shaojun Yu}
\ead{edwardsmith123@sjtu.edu.cn}

\begin{abstract}
The purpose of this work is to investigate the Cauchy problem of global-in-time existence of large strong solutions to the Navier–Stokes equations for compressible viscous  and heat conducting fluids. A class of density-dependent viscosity is considered. By introducing the modified  effective viscous flux and using the bootstrap argument, we establish the global existence of large strong solution when the initial density is linearly equivalent to a large
constant state. It is worthy of mentioning that, different from the work of Matsumura and Nishida (J. Math. Kyoto Univ., 1980) with small initial perturbation and the work of Huang and Li (Arch. Ration. Mech. Anal., 2018) with small energy but possibly large oscillations, our global large strong solution is uniform-in-time in $H^2$ Sobolev space and the uniform-in-time bounds of both density and temperature are obtained without any
restrictions on the size of initial velocity and initial temperature. In addition, when the initial data belongs to $L^{p_0}\cap H^2$ with $p_0\in[1,2]$, we establish the convergence of the solution to its associated equilibrium with an explicit
decay rate whether the initial data close to or far away from the equilibrium in the whole space. As a result, we give a specific large strong solution in Sobolev space satisfying the global existence assumptions proposed by  Villani (Mem. Amer. Math. Soc., 2009), He, Huang, and  Wang (Arch. Ration. Mech. Anal., 2019), Zhang and Zi (Ann. Inst. H. Poincaré Anal. Non Linéaire, 2020) in studying the asymptotic behavior of solution, and extend the second above result to the nonisentropic case. This paper considers for the first time the application of Fourier splitting method to Navier-Stokes equations with variable viscosity.

 \end{abstract}

\begin{keyword}
Full compressible Navier–Stokes equations, Density-dependent viscosities, Large initial data, Global strong solutions, Asymptotic behavior.
\end{keyword}

\end{frontmatter}



\section{Introduction}
The motion of a compressible viscous, heat-conductive, and Newtonian polytropic fluid is governed by the following full
compressible Navier-Stokes equations:
\begin{equation}\label{FCNS}
\begin{cases}
\pt\rho+\div(\rho u)=0,\\[6pt]
\pt(\rho u)+\div(\rho u\otimes u)+\D P=\div\T,\\[6pt]
\pt(\rho\calE)+\div(\rho\calE u+Pu)=\div(u\T)+\div(\kappa\D\theta),
\end{cases}
\end{equation}
where $x=(x_1,x_2,x_3)^\top\in\R^3$ and $t\geq 0$ denote the spatial coordinate and time coordinate, $\rho$ the mass density, $u = (u^1, u^2, u^3)^\top$ the fluid velocity, $P$ the pressure of the fluid, $\theta$ the absolute temperature, $\calE=e+\frac{1}{2}|u|^2$ the specific total energy, $e$ the specific internal energy. The equation of state for polytropic gas satisfies
\begin{equation}\label{Pecv}
P=R\rho\theta, \quad e=c_v\theta,
\end{equation}
where $R>0$ is the gas constant, and $c_v>0$ is the specific heat at
constant volume. $\T$ is the viscosity stress tensor given by $\T=2\tilde{\mu}(\rho)\frD(u)+\tilde{\lambda}(\rho
)\div u\mathbb{I}_3$, where 
$$\frD(u) =\frac{\D u+(\D u)^\top}{2}$$
is the deformation tensor, $\mathbb{I}_3$ is the $3 \times 3$ identity matrix, $\tilde{\mu}$ is the shear viscosity coefficient, and $\tilde{\lambda}+\frac{2}{3}\tilde{\mu}$ is the bulk viscosity coefficient. $\kappa$ denotes the coefficient of heat conductivity.
In this paper, we consider the following case:
\begin{equation}\label{temperature-dependent}
\tilde{\mu}(\rho)=\mu\rho^\al, \quad \tilde{\lambda}(\rho)=\lambda\rho^\al, \quad \kappa=constant>0,
\end{equation}
where $\mu, \lambda$ and $\al$ are constants satisfying
\begin{equation}\label{constants}
\mu>0, \quad 2\mu+3\lambda\geq 0, \quad \al>0.
\end{equation}

There have been extensive literature on the well-posedness for the  compressible Navier-Stokes equations \eqref{FCNS}  in multi-dimensional space.  The local existence and uniqueness of strong and classical solutions in bounded and unbounded domain were established in \cite{MR0283426, MR0149094, MR0106646, ta}. 
For the case of global well-posedness in three dimensional space, when the viscosity and heat conductivity coefficients are constants, in 1980s, Matsumura and Nishida \cite{MR555060,MR0564670} first demonstrated the global strong solutions with the initial data close to a non-vacuum equilibrium in Sobolev space. Later, Hoff \cite{MR1480244} extended their results to the cases with discontinuous initial data, provided that the initial density remains strictly positive. In 1996, Jiang \cite{MR1389908} considered the equations in the domain exterior to a ball and proved the global existence of spherically symmetric smooth solutions for large initial data with spherical symmetry. In 1998, Xin \cite{MR1488513} presented a sufficient condition on the blowup of smooth solution in arbitrary space dimensions with initial density of compact support. It is shown that any smooth solution for polytropic fluids in the absence of heat conduction will blow up in finite time as long as the initial density has compact support.
In 2004, Hoff and Jenssen \cite{MR2091508} proved the global existence of weak solutions with initial data and external forces which are large, discontinuous, and spherically or cylindrically symmetric. In 2013, Xin and Yan \cite{MR3063918} improved the blowup results of \cite{MR1488513} by removing the crucial assumptions that the initial density has compact support and the smooth solution has finite total energy. In 2017,  Wen and Zhu \cite{MR3597161} investigated the Cauchy problem and showed that the strong solution exists globally in time if the initial mass is small for the fixed coefficients of viscosity  $\mu$ and heat conductivity $\kappa$, and can be large for the large coefficients of viscosity and heat conductivity.  In the next year, Huang and Li \cite{MR3744381} proved the global existence of classical solution to the Cauchy problem with smooth initial data which are of small energy but possibly large oscillations. Recently, Chen, Huang and Zhu \cite{MR4813232} established the global-in-time existence of solution of the Cauchy problem with initial data that are large, discontinuous, spherically symmetric, and away from the vacuum. The solution they obtained is of global finite total energy and on any region strictly away from the possible vacuum, the velocity and specific internal energy are Hölder continuous, and the density has a uniform upper bound. For the case of global well-posedness in 1D and 2D spaces, we refer to 
\cite{MR637519, MR651877, MR4346514, MR4039142, MR4491875, MR4740640, MR4008547}, etc., and the references cited therein.

However, viscosity and the heat conductivity coeﬃcients may not be constants, they may depend on density or temperature, or both. At present, there are much fewer results on global well-posedness to the 3D full compressible Navier-Stokes equations with variable viscosities. First, we recall some known results of isentropic case. In two dimensional space, when initial vacuum is not allowed, in 1995, Vaigant and Kazhikhov \cite{vk} studied the  initial-boundary value problem with density-dependent viscosity, that is, 
\begin{equation}\label{1.30-2-1}
\tilde{\mu}=constant >0,\quad\tilde{\lambda}(\rho)=\rho^\beta,\quad\beta>3,
\end{equation}
and proved the existence of global large strong solution. Later, in 2016, Huang and Li \cite{MR3505779} relaxed the power index $\beta$ to be $\beta>\frac{4}{3}$, and the related Cauchy problem can be found in \cite{MR4429416}. When $0<\beta\leq 1$, Fang and Guo \cite{liguo} established the global well-posedness of the strong solution to the Cauchy problem provided that the initial data are of small total energy, which was further improved by Ding, Huang and Liu \cite{MR3844919} to a classical solution in 2018. When the initial vacuum is allowed, we refer \cite{MR4444073, MR4732164, MR3200422, MR3053470} to the interested readers. In three dimensional space, a remarkable discovery of a new mathematical entropy function was made by Bresch and Desjard\^ins \cite{Bre2003Exi}, in 2003,
for $\lambda(\rho)$ and $\mu(\rho)$ satisfying the relationship
\begin{equation}
\tilde{\lambda}(\rho)=2(\tilde{\mu}^\prime(\rho)\rho-\tilde{\mu}(\rho)),
\end{equation}
which provides additional regularity on some derivative of the density. This observation was applied widely in proving the global existence of weak solutions with vacuum for isentropic Navier-Stokes equations and some related models. Based on the methods in \cite{Bre2003Exi}, in 2016, Vasseur-Yu \cite{MR3573976} proved the existence of global weak solutions for 3D
compressible Navier–Stokes equations with degenerate viscosities for $1<\gamma<3$ (The result holds for $\gamma>1$ in two dimensional space). For more results about global existence of weak solutions, we refer to  \cite{Bresch2022Glo, MR2254008, liweek, Mellet2007On}, etc.  In 2021, Xin and Zhu \cite{Xin2021Glo}  considered viscosities satisfying 
\begin{equation}\label{temperature-dependent-1}
\tilde{\mu}(\rho)=\mu\rho^\al, \quad \tilde{\lambda}(\rho)=\lambda\rho^\al,
\end{equation}
with $\alpha>1$ and established the global existence of classical solution for the Cauchy problem with vacuum under some initial smallness assumptions on density in homogeneous Sobolev space.
In 2023, Yu \cite{MR4589926} considered the Cauchy problem with viscosities as in (\ref{temperature-dependent-1}) where the parameters satisfying 
\begin{equation}
\frac{4}{3}\leq\gamma\leq\alpha\leq\frac{5}{3}, \quad\alpha+4\gamma>7,\quad \alpha+\gamma \leq 3.
\end{equation}
Under the condition that the lower bound of initial density is sufficiently large, he proved
that strong solution exists globally in time for large initial data. Recently, Cao, Li and Zhu \cite{MR4811168} considered the initial-boundary value problem in the domain exterior to a ball in $\R^d ~ (d=2, 3)$. When $\alpha=1$ in \eqref{temperature-dependent-1}, they proved the global existence of the unique spherically symmetric classical solution for large initial data with spherical symmetry and far field vacuum. For the results of isentropic Navier-Stokes equations in one dimensional space, we refer to  \cite{Burtea2020New, Cao2022Glo,Con2020Com, Haspot2018Exi, Kang2020Glo, Mellet2007Exi}, and the references therein.

Next, we turn to the results of nonisentropic compressible Navier-Stokes equations with variable viscosity, and the issues become much more complicated. In 1985, Kawohl \cite{MR791841} proved the global existence of large classical solution to initial-boundary value problem in one dimensional space when the viscosity and the heat conductivity satisfy 
\begin{equation}\label{temperature-dependent-2}
0<\mu_0\leq\mu(v)\leq \mu_1,\quad \kappa_0(1+\theta^q)\leq \kappa(v,\theta)\leq \kappa_1(1+\theta^q), \quad |\kappa_v(v,\theta)|+|\kappa_{vv}(v,\theta)|\leq \kappa_1(1+\theta^q),
\end{equation}
where $v$ is specific volume and $q\geq2$.
In 2014, Liu, Yang, Zhao and Zou \cite{MR3225502} investigated the temperature-dependent transport coefficients and established the Nishida–Smoller type global smooth solution to Cauchy problem for the large data under the assumption that $(\gamma-1)\|(v_0-1, u_0, \frac{\theta_0}{\sqrt{\gamma-1}})\|_{H^3}< C $ for some positive constant $C$ which does not depend on $\gamma$. 
In 2016, Wang \cite{MR3461630} studied the initial and initial-boundary value problems  for the
$p-$th power Newtonian fluid  and established the existence and uniqueness of global smooth non-vacuum solutions when the transport coefficients $\mu$ and $\kappa$ are given as 
\begin{equation}\label{temperature-dependent-4}
\tilde{\mu}(\rho)=\rho^\al, \quad \tilde{\kappa}(\theta)=\theta^\beta,
\end{equation}
with some positive parameters $\alpha$ and $\beta$.
In the same year, Wang and Zhao \cite{MR3564590} considered the Cauchy problem when the viscosity $\mu$ and the heat conductivity $\kappa$ depend on the density $\rho$ and the temperature $\theta$ and are both proportional to $h(\rho)\theta^\al$ for certain non-degenerate smooth function $h$. They proved the existence and uniqueness of a global-in-time non-vacuum solution under certain assumptions on the parameter $\al$ and initial data, which imply that the initial data can be large if $|\al|$ is sufficiently small. It is the first global existence result for general adiabatic exponent and large initial data when the viscosity coefficient depends on both density and temperature. In 2017, Duan, Guo and Zhu \cite{MR3603273} considered the initial-boundary value problem with the stress-free and heat insulated boundary condition, and established the global existence of strong solution  with  the initial density is away from vacuum and the following density-dependent viscosity and temperature-dependent heat conductivity 
\begin{equation}\label{temperature-dependent-3}
\mu(\rho)=1+\rho^\alpha, \quad\kappa(\theta)=\theta^\beta,\quad \alpha\geq 0,\quad \beta>0.
\end{equation}
For the three dimensional case, in 2007, Bresch and Desjard\^ins \cite{MR2297248} considered the Cauchy problem or periodic problem and proved that the weak solutions exist globally in time under the condition that
$$\lambda(\rho)=2(\mu'(\rho)\rho-\mu(\rho)),\quad \kappa(\rho,\theta)=\kappa_0(\rho,\theta)(1+\rho)(1+\theta^\al),\quad \al\geq 2.$$
In 2020, when the coefficients of viscosity depend on density and temperature, Yu and Zhang \cite{MR4079010} studied the Dirichlet boundary value problem and proved the strong solution exists globally in time provided that $\|\nabla u_0\|_{L^2}^2+\|\nabla \theta_0\|_{L^2}^2$ is suitably small. 
From the above mentioned known results,
as far as our concerned, there are few results for the case of 3D full compressible Navier-Stokes equations with variable viscosity and large initial data. In this paper, we are interested in investigating the global well-posedness  and  asymptotic behavior of strong solution to the system \eqref{FCNS} with density-dependent viscosities and large initial data.  Without loss of generality, we assume the constant $c_v=1$ in \eqref{Pecv}. It follows from \eqref{Pecv}--\eqref{temperature-dependent} that \eqref{FCNS} can be rewritten as
\begin{equation}\label{FCNS:2}
\begin{cases}
\pt\rho+\div(\rho u)=0,\\[6pt]
\rho(\pt u+(u\cdot\D)u)+\D P=2\mu\div(\rho^\al\frD(u)) +\lambda\D(\rho^\al\div u),\\[6pt]
\rho(\pt\theta+u\cdot\D\theta)+P\div u=2\mu\rho^\al |\frD(u)|^2+\lambda\rho^\al(\div u)^2+\kappa\Delta\theta.
\end{cases}
\end{equation}
When system \eqref{FCNS:2} is supplemented with the following initial data and far field behavior
\begin{equation}\label{data and far}
\begin{aligned}
& (\rho, u, \theta)(0, x)=(\rho_0, u_0, \theta_0)(x) \quad \text{for} \quad x\in\R^3, \\
& (\rho, u, \theta)(t, x) \to (\vr, 0, 1) \quad \text{as} \quad |x|\to\infty \quad \text{for} \quad t \geq 0,
\end{aligned}
\end{equation}
where $\vr$ is a positive constant, we give our first result concerning the global existence of strong solution to Cauchy problem \eqref{FCNS:2}--\eqref{data and far} with large initial data.
\begin{theorem}\label{th} Let the constants $\vr>1$ and $\al>4$. Suppose the initial data $(\rho_0,u_0,\theta_0)$ satisfy 
\begin{equation}\label{ic-1}
\frac{3}{4}\vr\leq\rho_0(x)\leq\frac{5}{4}\vr, \quad \underline{\theta}\leq\theta_0(x)\leq\bar{\theta}, \quad \left(\rho_0(x)-\vr, u_0(x), \theta_0(x)-1\right)\in H^2,
\end{equation}
\begin{equation}\label{compatibility condition}
\|\rho_0-\vr\|_{H^2}+\|\D\rho_0\|_{L^4}\leq C_0, \quad 
\end{equation}
with positive constants $C_0$, $\bar{\theta}$ and $\underline{\theta}$ independent of $\vr$, then there exists a constant $\rL>0$ independent of $\vr$, such that if $\vr\geq\rL$, Cauchy problem \eqref{FCNS:2}--\eqref{data and far} admits a global strong solution $(\rho,u,\theta)$ satisfying
\begin{equation}\label{sp}
\begin{cases}
\displaystyle
\left(\rho-\vr, u, \theta-1\right)\in L^\infty(0,\infty; H^2), \quad u\in L^2(0,\infty;W^{2,4}),\\[6pt]
\displaystyle
\displaystyle
( { u}_t,\theta_t)\in L^2(0,\infty;H^1)\cap L^\infty(0,\infty;L^2), \quad \rho_t\in L^\infty(0,\infty;H^1).
\end{cases}
\end{equation}
and
\begin{equation}\label{bc}
\frac{2}{3}\vr\leq\rho(x,t)\leq\frac{4}{3}\vr, \quad \underline{\Theta}\leq\theta(x,t)\leq\overline{\Theta}, \quad \forall ~ (x,t)\in\R^3\times(0,\infty),
\end{equation}
with positive constants $\underline{\Theta}$ and $\overline{\Theta}$ depending on $\bar{\theta}$, $\underline{\theta}$, $\mu$, $\lambda$, $\kappa$, $R$, $\vr$ and the initial data. Moreover, the following large time behavior holds
\begin{equation}\label{1010}
    \lim_{t\rightarrow \infty} \left(\|\D\rho\|_{H^1}+\|\nabla\theta\|_{H^1}+\|\nabla u\|_{H^1}\right)=0.
\end{equation}
\end{theorem}

\begin{remark}
    We would like to mention that Theorem \ref{th} is the first result concerning the global existence of strong solutions  for the Cauchy problem to the 3D full Navier-Stokes equations with variable viscosities and large initial data. Our global strong solution is uniform-in-time in solution space (\ref{sp}). Moreover, for high dimensional case, the uniform-in-time lower and upper bounds of the density and temperature are also given in (\ref{bc}).
\end{remark}

\begin{remark}
    The choice of $\alpha>4$ is a technical requirement, which is used to dealing with the leading terms in the a priori estimates. It should be mentioned that it seems that $\alpha>4$ is not a sharp assumption. Therefore, it would be interesting to study the problem \eqref{FCNS:2}--\eqref{data and far} when $\alpha\leq4$. This is left for interested readers. 
\end{remark}

\begin{remark}
In Theorem \ref{th}, we consider the coefficients of viscosity depend on the density and  the coefficient of heat conductivity is a constant. This is certainly a restriction which is not physically motivated, experimental evidence pointed out that the coefficients of viscosity and heat
conductivity  usually depend on both density and temperature. In particular for high temperature case, the transport coefficients increase such as 
\begin{equation}\label{t}
\tilde{\mu}(\theta)={\mu}\theta^{\alpha}, \quad \tilde{\lambda}(\theta)={\lambda}\theta^{\alpha},\quad \tilde{\kappa}(\theta)={\kappa}\theta^{\beta}, \quad \alpha>0,\quad \beta>0.
\end{equation}
   Under this physical background, the Cauchy problem \eqref{FCNS} with  temperature-dependent viscosities (\ref{t}) is taken into account. The initial data and far field behavior are given as follows:
\begin{equation}\label{data and far2}
\begin{aligned}
& (\rho, u, \theta)(0,x)=(\rho_0, u_0, \theta_0)(x) \quad \text{for} \quad x\in\R^3, \\
& (\rho , u, \theta)(t, x) \to (1, 0, \bar{\theta}) \quad \text{as} \quad |x|\to\infty \quad \text{for} \quad t \geq 0,
\end{aligned}
\end{equation}
where $\bar{\theta}$ is a positive constant. In our forthcoming paper, the global well-posedness of large strong solution is established provided that the initial temperature is
linearly equivalent to a large constant state.
\end{remark}

\begin{remark}
We notice that there are some global existence results of nonisentropic Navier-Stokes equations with density-dependent viscosity in one dimensional space, see \cite{MR3603273,MR3461630}. It should be pointed out that, by using the similar method of this paper, our results can be proved in a bounded domain. From mathematical point of view, 
we extend their results to the 3D case. Moreover, our method can be applied to the constant viscosities case with small initial energy which was studied in \cite{MR3744381} where there are some restrictions on the size of initial velocity and temperature, while in this paper, we only pose restriction on the size of initial density.

\end{remark}

After obtaining the global existence of large strong solution, motivated by the above mentioned results in the whole space, it is natural to investigate the asymptotic behavior of the solution which far away from  the equilibrium initially. Next, we recall some related works.  Regarding to the long time behavior for solutions, a lot of results are under the close-to-equilibrium setting. In 1980s, Matsumura and Nishida \cite{MR555060, MR0564670} first obtained the optimal decay rates of $L^2$-norm 
\begin{equation}\label{MN:decay}
\| (\rho-\vr, u, \theta-1)(t)\|_{L^{2}} \leq C(1+t)^{-\frac{3}{4}}.
\end{equation}
The general $L^{p}$ decay rates had been established by Ponce \cite{MR0785713}, in 1985,
\begin{equation}
    \|\nabla^{k}(\rho-\vr, u, \theta-1)(t)\|_{L^{p}} \leq C(1+t)^{-\frac{d}{2}\left(1-\frac{1}{p}\right)-\frac{k}{2}},
\end{equation}
where $p\geq 2$, $0\leq k\leq 2$ and $d = 2, 3$ is the dimension of space. 
In 2009, Villani \cite{MR2562709} studied the asymptotic behavior of global solution which belongs to $L^\infty(0,\infty;C^k(\R^3))$, $\forall k\in \N$.  Under the assumption that the density and the temperature have uniform lower bounds, he proved that 
\begin{equation}\label{v}
\|(\rho-1,u,\theta-1)(t)\|_{C^{k}}=O(t^{-\infty}),  \quad \text{for} ~ k\in\mathbb{N}.
\end{equation}
When the initial data is far away from the equilibrium, there are fewer results on the large-time behavior. For the isentropic Navier-Stokes equations, in 2019, He, Huang and Wang \cite{He2019Glo} established the global stability of large smooth solution in $\mathbb{R}^3$ under the condition that the density is uniformly bounded in $C^\alpha$ for some $\alpha\in(0,1)$ in the whole space, more precisely, they obtained the decay rate 
\begin{equation}\label{1.23}
\|\rho(t)-1\|_{H^{1}}+\|u(t)\|_{H^{1}} \leq C(1+t)^{-\frac{3}{4}\left(\frac{2}{p}-1\right)},
\end{equation}
where $(\rho_0 -1, u_0 ) \in L^{p} \cap H^2$ with $p\in[1,2]$. In the next year,  Gao, Wei and Yao \cite{Gao2020The} refined the results in \cite{He2019Glo}, and considered the following second-order derivative decay rate
\begin{equation}
\|\D\rho(t)\|_{H^{1}}+\|\D u(t)\|_{H^{1}}+\|\rho_t(t)\|_{L^2}+\|u_t(t)\|_{L^2} \leq C(1+t)^{-\frac{3}{4}\left(\frac{2}{p}-1\right)-\frac{1}{2}}.
\end{equation}
For the nonisentropic case, in 2020, Zhang and Zi \cite{Zhang2020Con} studied the convergence to equilibrium on the torus under the conditions that both the density and the temperature possess uniform in time positive lower and upper bounds, and obtained the exponential stability of the $C^\infty$ solution
\begin{equation}\label{ZZ:decay}
\| (\rho-\vr, u, \theta-1)(t)\|_{C^{k}} \leq Ce^{-\frac{Ct}{2k+4}}, \quad \text{for} ~ k\in\mathbb{N},
\end{equation}
which improved the previous result (\ref{v}) in \cite{MR2562709}. Recently, He, Huang and Wang \cite{MR4389852} proved the global-in-time stability of large solution
\begin{equation}\label{HHW:decay}
\| (\rho-\vr, u, \theta-1)(t)\|_{H^{1}} \leq C(1+t)^{-\frac{3}{4}}
\end{equation}
under the assumptions that $(\rho_0-\vr, u_0, \theta_0-1)\in L^1(\R^3)$, the density is uniformly bounded in $C^\alpha$ for some $\alpha\in(0,1)$, and the temperature is uniformly bounded in $\R^3\times\R_+$. Gao, Wei and Yao \cite{Gao2021De} extended the results in \cite{MR4389852} to the second-order spatial derivative of the solution
\begin{equation}\label{GWY:decay}
\|\D(\rho-\vr, u, \theta-1)(t)\|_{H^{1}} \leq C(1+t)^{-\frac{5}{4}},
\end{equation}
which was improved by Luo and Zhang \cite{MR4493879} to the rate $(1+t)^{-\frac{7}{4}}$. Moreover, the authors in \cite{Gao2021De} obtained 
\begin{equation}\label{GWY:decay:2}
\|(\rho-\vr, u, \theta-1)(t)\|_{H^{1}} \leq C(1+t)^{-\frac{s}{2}}, \quad \|\D(\rho-\vr, u, \theta-1)(t)\|_{H^{1}} \leq C(1+t)^{-\frac{1+s}{2}},
\end{equation}
when the initial perturbation belongs to the negative Sobolev space $\dot{H}^{-s}$ for $s\in(0,\frac{3}{2})$.

\bigskip

In the following, we give our second result concerning the algebraic decay rate of the global solution obtained in Theorem \ref{th}.
\begin{theorem}\label{th2} Let $(\rho, u, \theta)$ be the global strong solution of (\ref{FCNS})  obtained in Theorem \ref{th}. Moreover, if $(\rho_0-\vr, u_0, \theta_0-1)\in L^{p_0}(\R^3)$ with $p_0\in[1,2]$, then we have
 \begin{equation}\label{decay-0}
\|\rho-\vr\|_{H^1}+\|u\|_{H^1} + \|\theta-1\|_{H^1}+\|\dot{u}\|_{L^2}+\|\dot{\theta}\|_{L^2}\leq \bar{C}(1+t)^{-\frac 34\left(\frac 2{p_0}-1\right)},
\end{equation}
where $\dot{u}$ and $\dot{\theta}$ denote the material derivative of $u$ and $\theta$, and the constant $\bar{C}$ depends only on $\vr$, $\mu$, $\lambda$, $\kappa$, $R$ and initial data.
\end{theorem}

\begin{remark}
In previous works \cite{MR555060, MR0564670}, the authors established the global smooth solution under the assumption that the initial perturbation is sufficiently small, moreover, they obtained the decay rate \eqref{MN:decay}. When the initial perturbation is large, there are rich literature concerning the large time behaviour of the solution, see \cite{Gao2020The,Gao2021De,He2019Glo,MR4389852,MR4493879,MR2562709,Zhang2020Con}.  However, as far as we know, the global existence of the large strong or smooth solution in Sobolev space are not given in the above references. In this paper, we establish the global large strong solution in Theorem \ref{th}, based on which, we show the asymptotic behaviour of the solution in Theorem \ref{th2}. It is worth mentioning that, the regularity assumptions
\begin{equation}\label{condition:HHW}
\sup\limits_{t\geq 0}\|\rho-\vr\|_{C^\al}\leq M, \quad \sup\limits_{t\geq 0}\|\theta-1\|_{L^\infty}\leq M,
\end{equation}
in \cite{Gao2020The,Gao2021De,He2019Glo,MR4389852,MR4493879} are satisfied in Theorem \ref{th}. This is the first result dedicated to the asymptotic behavior of the large strong solution to 3D full compressible Navier-Stokes equations with variable viscosities in the whole space. 
\end{remark}

\begin{remark}
Combining \eqref{1010} and the proof of Theorem \ref{th2}, we can obtain the decay rate of the second-order spatial derivative of density. Recently, we notice that in \cite{Gao2021De,MR4493879} the authors obtained the optimal decay rates of the higher-order spatial derivative of solution when the viscosities are constants. However, for the variable viscosities case, whether the optimal decay rates still hold remain unsolved.
\end{remark}


We now make some comments on the analysis of this paper. The local
existence and uniqueness of strong solutions to \eqref{FCNS:2}--\eqref{data and far} can be found in \cite{MR0283426, MR0149094, MR0106646, ta, MR3210747}. However, 
there are few results concerning the global existence of large strong or classical solutions to nonisentropic Naiver-Stokes equations in Sobolev space. In this paper, we consider the global existence of large strong 
solutions to the Cauchy problem \eqref{FCNS:2}--\eqref{data and far} when initial data satisfy \eqref{ic-1}–-\eqref{compatibility condition}. We hope to use the bootstrap argument to extend the local strong solution with strictly positive initial density and temperature globally in time just under the condition that the initial density is
linearly equivalent to a large constant state. Under this purpose, we need to establish the global a priori estimates in the whole space for the power law of density-dependent viscosities case. It turns out that the key difficulties in this paper are to derive both the time-independent lower-order and higher-order
estimates of the strong solution $(\rho, u, \theta)$, and to find the range of the parameter $\alpha$ to close our a priori estimates. One may think that the larger $\alpha$, the better dissipation property, and then global well-posedness problem can be solved provided $\alpha$ is large. Nevertheless, it is a nontrivial problem. In fact, using  the
bootstrap argument to close the estimate  and finding a suitable interval of the parameter $\alpha$ to control the terms $A_i(T) ~ (i=1,\cdots,8)$, $A_{5,j}(T) ~(j=1,2,3)$ (see Proposition \ref{p4.1}) at the same time is not obvious.  More precisely,
\begin{itemize}
    \item Global solution of large strong solution.
    
    In the lower-order estimates, we give an entropy-type inequality when viscosities depend on density, and bounds on the $L^2$-norm of the solution are obtained. Here, the bounds are uniform-in-time  and depend on the far field value of density $\vr$, which will play a crucial role in the analysis of this paper. In the case of density-dependent viscosities in the higher-dimensional space, the first main difficulty lies in the estimates on the first-order spatial derivatives
of both the velocity and the temperature. To overcome this difficulty, different 
from the constant viscosity case \cite{MR3744381}, we introduce the modified effective viscous flux and give some assumptions on the first-order spatial derivatives
of density (see $A_7(T)$, $A_8(T)$), by which, after some 
careful analysis on the standard $L^p$ elliptic estimate, we succeed in deriving some new uniform estimates on the key terms of $\|\D G\|_{L^p}$ and $\|\D u\|_{L^p}$, $p\in[2,6]$.
The second main difficulty comes from the bound
of $\|\sqrt{\rho}\dot u\|_{L^2}$. When viscosities depend on density and $\vr$ is large, by the standard energy method, we can see that $\|\sqrt{\rho}\dot u\|_{L^2}$ is a large term due to large initial data. In fact, the upper bound
of $\|\sqrt{\rho}\dot u\|_{L^2}$ plays an important role in the higher-order estimates of the velocity, which further will be used in the time-independent estimates of the first-order spatial derivatives of density. When $\|\sqrt{\rho}\dot u\|_{L^2}$ is too large, we cannot use bootstrap argument to close our a priori assumptions directly. For this reason, we divide our estimates with respect to time into two parts $(0,\sigma(T))$ and $(\sigma(T),T)$, and some new difficulties come out. In the time interval $(0,\sigma(T))$, estimates of $A_i(\sigma(T))(i=1,\cdots,4)$, with elaborate analysis on the power of $\vr$, can be obtained provided that the $\vr$ is suitably large, which imply the smallness of $\nabla u$ in $L^2(0,\sigma(T);L^2)$. In order to derive the the smallness information of $\nabla u$ in $L^2(\sigma(T),T;L^2)$,  different from the isentropic case, we need to prove the uniform upper bound of the temperature as we can see from the entropy-type inequality. Next, in the time interval $(\sigma(T), T)$,  we firstly assume that the upper bound of temperature can be controlled by a constant which depends on $\vr$, by which we have $\nabla u$ is small in $L^2(0,T;L^2)$ when $\vr$ is suitably large, and it plays a fundamental role  throughout this paper. Then the time-weighted higher-order estimates of $A_5(T)$ and $A_6(T)$ can be proved by using a linear combination of $A_{5,j}(T)(j=1,\cdots,3)$, and the estimates on $A_5(T)$ and $A_6(T)$ are better than $A_i(\sigma(T))(i=1,\cdots,4)$ due to the influence of the lack of initial values. With the estimates on the first-order and the second-order spatial derivatives of temperature at hand, we show that, by using interpolation, the temperature is indeed uniform bounded from above. At the end of our a priori estimates, we are in position to estimate the first-order spatial derivatives of density, $A_7(T)$ and $A_8(T)$, which is the third main difficulty in proving the global existence of strong solution. After careful observation, we find that time-weighted estimates on $A_7(T)$ and $A_8(T)$ can be established in time interval $(0,\sigma(T))$ by using Gronwall inequality due to the bound of $\sigma(T)$. It is worth mentioning that due to the interaction between density and velocity, $\|\nabla^2 u\|_{L^4}$ plays an important role in our analysis. When $t\in(\sigma(T),T)$, although the time interval is unbounded, thanks to the time-weighted estimates, the upper bound of $\|\nabla^2 u\|_{L^4}$ is smaller than that of the $(0,\sigma(T))$ case. Then, by using standard energy estimates, together with the modified effective viscous flux, $A_7(T)$ and $A_8(T)$ can be bounded, by which, together with $L^2$-norm estimate of $\rho-\vr$ and interpolation inequality, the lower and upper bounds of the density can be obtained. Based on the analysis mentioned above, we can close the a priori assumptions in Proposition \ref{p4.1}. At last, in Lemma \ref{Lem:10}, different from the previous results \cite{MR555060, MR0564670,MR3744381}, the $\vr$-dependent uniform upper bound of the strong solution is given, and hence, global existence of large strong solution is established. In addition, the uniform-in-time lower and upper bounds of both density and temperature are given in the higher dimension space.


 \item Asymptotic behavior  of large strong solution.
 
Motivated by results in \cite{Gao2020The,Gao2021De,MR4389852,MR4493879,MR2562709,Xin2021Opi,Zhang2020Con} , we  consider the asymptotic behavior  of large strong solution to the full compressible Navier-Stokes equations with density-dependent viscosities and large initial data. The most cited known results are given by Matsumura and Nishida \cite{MR555060,MR0564670} in the small perturbation framework. However, the asymptotic behaviour of global large solution under the case of variable viscosities are open as far as we know. In Theorem \ref{th}, we establish the global-in-time existence of large strong solutions to the Cauchy problem  when the initial density is linearly equivalent to a large constant state. 
Then, in Theorem \ref{th2}, we turn to study asymptotic behavior of large strong solution. Here, we apply the method in \cite{He2019Glo} which concerning the global stability of large solution to isentropic Navier-Stokes equations, to study the nonisentropic case with variable viscosities. First, in Lemma \ref{Lem4.1:6-0}, we derive a dissipation inequality of the global solution $(\rho,u,\theta)$, where the temperature of fluid is considered additionally. Second, in Lemma \ref{Lem4.2}, by using Fourier transform, we give some decay estimates on the low frequency part of the solution provided the initial data belong to $L^{p_0}$ space for $1\leq {p_0}\leq 2$, while \cite{MR4389852,MR4493879} only considered the case that the initial data belong to $L^1$ space. It should be noted that, compared with the constant viscosities case \cite{Gao2020The,Gao2021De,MR4389852,MR4493879,MR2562709,Xin2021Opi,Zhang2020Con}, estimates in frequency space become much more complicated when we consider the nonisentropic Navier-Stokes equations with variable viscosities. 
In this paper, we find some cancellation property about the temperature related nonlinear terms which cannot be controlled by the present initial assumptions. When viscosities depend on density, based on properties of  Fourier transform and the necessary regularity of density established in \S \ref{Section 3}, we succeed in deriving decay estimates on the low frequency part of viscosity related terms, viscosity stress tensor $\T$ for example. At last, in Lemma \ref{decayprop}, inspired by the work \cite{Sch1985De}, by using Fourier splitting method, together with the high frequency estimates and ordinary differential equation with respect to time, after some iterations of time decay rates, we establish the asymptotic behavior of large strong solutions in whole space. As a result,  we extend the works in \cite{Gao2020The,Gao2021De,MR4389852,MR4493879,MR2562709,Xin2021Opi,Zhang2020Con} to the nonisentropic, variable viscosity case. To the best of authors' knowledge, this is the first result of asymptotic behavior of large strong solution to the variable viscosity fluid model. 
\end{itemize}

The rest of this paper is organized as follows.  In \S \ref{prelim}, we give some notations and preliminary lemmas which will be used frequently in our following proof. In \S \ref{Section 3}, we devote to establishing the global existence of large strong solution by using bootstrap argument. More precisely, in \S \ref{sec3.1}, we give the lower-order uniform estimates of solution, and the higher-order time-weighted uniform estimates are given in \S \ref{sec3.2}. In \S \ref{sec3.3}, we give the uniform estimates on the first-order spatial derivatives of density, by which, lower and upper bounds of density are also obtained.
Combining the estimates mentioned above, in \S \ref{sec3.4}, we show the higher-order uniform estimates, then we can close our the priori estimates and extend the local solution to a global one. At the end of \S \ref{Section 3}, we prove that the first-order spatial derivatives of the global large solution trends to zero in $H^1$ Sobolev space as time trends to infinity, by which we obtain the lower bound of temperature. Finally, in \S \ref{sec4}, with additional assumptions on the initial data, we establish the convergence of the solution to its associated equilibrium with an explicit decay rate when the initial data far away from the equilibrium in the whole space.

\section{Preliminaries}\label{prelim}
	
\subsection{Notations and Inequalities}
	
For later purpose, we introduce the following notations. For any $r\in[1,\infty]$ and integer $k\geq 0$, we denote
\begin{equation*}
\begin{aligned}
& L^r=L^r(\R^3), \quad W^{k,r}=W^{k,r}(\R^3), \quad H^k=W^{k,2}, \\
& D^{k,r}=\left\{u\in L^1_{loc}(\R^3) : ~\D^ku\in L^r\right\}.
\end{aligned}
\end{equation*}
For matrices $A$ and $B$, we denote $A:B=\sum\limits_{i,j=1}^nA_{ij}B_{ij}$, where $A_{ij}$ is the $(i,j)$-element of $A$. For any function $f(x,t)$, we denote
\begin{equation*}
\int f\dif x=\int_{\R^3}f\dif x,
\end{equation*}
and $\dot{f}=f_t+u\cdot\D f$ is the material derivative of $f$. Moreover, $\|\cdot\|_{L^p}$, $\|\cdot\|_{L^\infty}$ and $\|\cdot\|_{H^s}$ stand for the norms of $L^p$, $L^{\infty}$ and $H^s$, respectively. $\widehat{f}(\xi)$ denotes the Fourier transformation of a function $f(x)$. Throughout this paper, we use $C$ to denote a generic positive constant that may depend on $\mu$, $\lambda$, $\kappa$, $R$, $C_0$, but is independent of $T$, $\vr$, $K_i$, and will change in different places.

	
The following local existence result can be similarly obtained to \cite{MR3210747}.
\begin{proposition}\label{local}
Assume that the initial data $(\rho_0, u_0, \theta_0)$ satisfies
\begin{equation}\label{data:condition}
\frac{3}{4}\vr\leq\rho_0(x)\leq\frac{5}{4}\vr, \quad \rho_0(x)-\vr\in H^1\cap D^{1,4}, \quad (u_0(x), ~ \theta_0(x)-1)\in H^2,
\end{equation}
then there exist $T_*> 0$ and a strong solution $(\rho,u,\theta)$ to \eqref{FCNS:2}--\eqref{data and far} in $\R^3 \times [0, T_*]$.
\end{proposition}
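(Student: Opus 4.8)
The plan is to construct the solution by the standard linearization-and-contraction scheme, working in the perturbation unknowns $\sigma:=\rho-\vr$ and $\vt:=\theta-1$, which lie in Sobolev spaces by virtue of the far-field condition in \eqref{data and far}. Starting from $(\sigma^0,u^0,\vt^0)$ equal, for all $t$, to the initial data, one defines the $(k+1)$-st iterate by solving, in turn, three \emph{linear} problems driven by the $k$-th iterate $(\sigma^k,u^k,\vt^k)$, with $\rho^k:=\sigma^k+\vr$: (i) the linear transport equation $\pt\sigma^{k+1}+u^k\cdot\D\sigma^{k+1}+(\sigma^{k+1}+\vr)\div u^k=0$; (ii) the linear, uniformly parabolic system $\rho^k(\pt u^{k+1}+u^k\cdot\D u^{k+1})-2\mu\div((\rho^k)^\al\frD(u^{k+1}))-\lambda\D((\rho^k)^\al\div u^{k+1})=-\D P(\rho^{k+1},\theta^k)$; (iii) the linear parabolic equation $\rho^k(\pt\vt^{k+1}+u^k\cdot\D\vt^{k+1})-\kappa\Delta\vt^{k+1}=2\mu(\rho^k)^\al|\frD(u^{k+1})|^2+\lambda(\rho^k)^\al(\div u^{k+1})^2-P(\rho^{k+1},\theta^k)\div u^{k+1}$; each is supplemented with the prescribed initial data. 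Because $\rho_0\in[\tfrac34\vr,\tfrac54\vr]$, one keeps, as part of the induction, $\rho^k\in[\tfrac12\vr,\tfrac32\vr]$ on a short interval $[0,T_*]$, so that $\mu(\rho^k)^\al$ and the density coefficient are bounded both above and away from zero; the linear problems (i)--(iii) are then uniquely solvable in the natural strong-solution class by classical transport and linear-parabolic theory.

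The core of the argument is a family of bounds, \emph{uniform in $k$}, valid on an interval $[0,T_*]$ whose length depends only on the data norms in \eqref{data:condition}. For the density one uses (i): a Gr\"onwall argument controls $\|\sigma^{k+1}\|_{L^\infty_t(H^1\cap D^{1,4})}$ and $\|\pt\sigma^{k+1}\|_{L^\infty_tL^q}$ in terms of $\int_0^{T_*}\|\D u^k\|_{L^\infty}\,\dif t$ (finite since $u^k\in L^2_tW^{2,q}\hookrightarrow L^2_tW^{1,\infty}$ for $q>3$), and, after shrinking $T_*$ so that $\int_0^{T_*}\|\div u^k\|_{L^\infty}\,\dif t$ is small, reproduces the range $[\tfrac12\vr,\tfrac32\vr]$ via $\rho^{k+1}=\rho_0\exp(-\int_0^t\div u^k)$. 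For the velocity one runs the usual parabolic energy estimates on (ii) — testing by $u^{k+1}$, by $\pt u^{k+1}$, and by the material derivative $\dot u^{k+1}$ — together with elliptic regularity for the stationary viscosity operator, to bound $\|u^{k+1}\|_{L^\infty_tH^2\cap L^2_tW^{2,q}}$ and $\|\pt u^{k+1}\|_{L^\infty_tL^2\cap L^2_tH^1}$; the cross terms carrying $\D(\rho^k)^\al=\al(\rho^k)^{\al-1}\D\sigma^k$ are absorbed by pairing $\D\sigma^k\in L^4$ with $\D u^{k+1}\in L^4$, which is exactly why $\sigma_0\in D^{1,4}$ is assumed — this is also what places the initial acceleration $\pt u|_{t=0}$ in $L^2$, so no vacuum-type compatibility condition is needed. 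The temperature is handled in the same way; the only new ingredient is the viscous-heating source $(\rho^k)^\al|\frD(u^{k+1})|^2$, whose $L^2_tL^q$ (resp.\ $L^2_tL^2$) norm is controlled by the velocity bounds via interpolation and the finiteness of $[0,T_*]$, so $\|\vt^{k+1}\|_{L^\infty_tH^2\cap L^2_tW^{2,q}}$ and $\|\pt\vt^{k+1}\|_{L^\infty_tL^2\cap L^2_tH^1}$ close. Every nonlinear contribution is multiplied by a positive power of $T_*$, so for $T_*$ small the induction hypothesis reproduces itself.

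It remains to pass to the limit. Subtracting consecutive linear problems and estimating the differences $(\sigma^{k+1}-\sigma^k,\ u^{k+1}-u^k,\ \vt^{k+1}-\vt^k)$ in the weaker norm $L^\infty_tL^2\times(L^\infty_tL^2\cap L^2_tH^1)^2$ — using the uniform higher bounds to dominate all coefficients — yields a contraction with factor $CT_*^{\eta}$ for some $\eta>0$, so, after one further shrink of $T_*$, $(\sigma^k,u^k,\vt^k)$ converges in that norm. Interpolating this convergence against the uniform bounds in the strong norm upgrades it to every intermediate norm, which is more than enough to pass to the limit in (i)--(iii) and recover \eqref{FCNS:2}--\eqref{data and far}; lower semicontinuity places $(\rho,u,\theta)$ in the asserted class, the pointwise bounds survive (so $\rho>0$ on $\R^3\times[0,T_*]$), uniqueness follows from the same difference estimate applied to two solutions, and the time-continuity statements follow from the equations (e.g.\ $\rho\in C_t(H^1\cap W^{1,q})$ from the transport equation with $u\in L^1_t\Lip$, and $u,\theta\in C_tH^2$ by an Aubin--Lions/interpolation argument).

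The step I expect to be most delicate is closing the uniform-in-$k$ estimates without spoiling the gain in $T_*$: bounding the viscous-heating source in the temperature equation in $L^2_tL^q$, and handling the $\D(\rho^k)^\al\cdot\D u^{k+1}$ cross terms in the momentum equation — precisely where the $D^{1,4}$-regularity of the density enters — require some care in the choice of norms and exponents. Once these uniform bounds are in hand the contraction and the passage to the limit are routine, and the whole construction parallels the local theory of \cite{MR3210747}.
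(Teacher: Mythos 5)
Your proposal is correct and follows the standard linearization--iteration--contraction scheme for local well-posedness away from vacuum, with the $D^{1,4}$ assumption on $\D\rho_0$ entering exactly where it should (the $\D(\rho^k)^\al\cdot\D u^{k+1}$ cross terms and the initial acceleration). The paper gives no proof of Proposition~\ref{local} at all --- it simply defers to \cite{MR3210747} --- and the method there is precisely the one you outline, so your argument matches the intended one.
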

	
The following well-known Gagliardo–Nirenberg inequality will be used (see \cite{Ladyzhenskaya1968}).
\begin{lemma}
For $p\in[2,6]$, $q\in(1,\infty)$ and $r\in(3,\infty)$, there exists a constant $C>0$ which may depend on $q,r$  such that for $f\in H^1$ and $g\in L^q \cap D^{1,r}$, it holds
\begin{equation}
\|f\|_{L^p}\leq C\|f\|_{L^2}^{\frac{6-p}{2p}}\|\D f\|_{L^2}^{\frac{3p-6}{2p}},
\end{equation}
\begin{equation}
\|g\|_{L^\infty}\leq C\|g\|_{L^q}^{\frac{q(r-3)}{3r+q(r-3)}}\|\D g\|_{L^r}^{\frac{3r}{3r+q(r-3)}}.
\end{equation}
\end{lemma}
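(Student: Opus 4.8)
The plan is to obtain both estimates from the two classical endpoint embeddings in $\R^3$—the Sobolev embedding $H^1\hookrightarrow L^6$ and Morrey's embedding $W^{1,r}\hookrightarrow L^\infty$ for $r>3$—combined with elementary interpolation, so the proof splits into two independent parts, each handled by first reducing to smooth functions by density.

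\emph{The first inequality.} By density of $C_c^\infty(\R^3)$ in $H^1(\R^3)$ it suffices to treat $f\in C_c^\infty(\R^3)$. I would first record the endpoint Gagliardo--Nirenberg--Sobolev bound $\|f\|_{L^6}\le C\|\D f\|_{L^2}$: the one-dimensional fundamental theorem of calculus in each coordinate gives $\|v\|_{L^{3/2}}\le\prod_{i=1}^3\|\pa_i v\|_{L^1}^{1/3}$, and applying this to $v=|f|^4$ together with Hölder's inequality yields $\|f\|_{L^6}^4\le C\|f\|_{L^6}^3\|\D f\|_{L^2}$, hence the claim. Then, for $p\in[2,6]$, choose $\theta\in[0,1]$ with $\frac1p=\frac{\theta}{2}+\frac{1-\theta}{6}$, which forces $\theta=\frac{6-p}{2p}$ and $1-\theta=\frac{3p-6}{2p}$; the log-convexity of $L^p$ norms (Hölder) gives $\|f\|_{L^p}\le\|f\|_{L^2}^{\theta}\|f\|_{L^6}^{1-\theta}$, and inserting the Sobolev bound produces $\|f\|_{L^p}\le C\|f\|_{L^2}^{\frac{6-p}{2p}}\|\D f\|_{L^2}^{\frac{3p-6}{2p}}$. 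Finally one passes to the limit along any sequence converging to $f$ in $H^1$ (so the gradients converge in $L^2$ and the functions in $L^p$ for $p\le6$).

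\emph{The second inequality.} Here $r>3$ is essential. By truncation and mollification one reduces to $g\in C_c^\infty(\R^3)$: the mollifications satisfy $\|g_\varepsilon\|_{L^q}\le\|g\|_{L^q}$, $\|\D g_\varepsilon\|_{L^r}\le\|\D g\|_{L^r}$ and converge to $g$ a.e., so an $L^\infty$ bound for $g_\varepsilon$ passes to the limit. For a ball $B=B_\rho(x_0)$, the standard Morrey argument—integrating $t\mapsto g(x_0+t\omega)$ along rays $\omega\in S^2$ and averaging over $B$—gives
\[
|g(x_0)|\le\frac{1}{|B|}\int_{B}|g|\,\dif y+C\rho^{1-3/r}\|\D g\|_{L^r(\R^3)},
\]
and Hölder bounds the average by $C\rho^{-3/q}\|g\|_{L^q(\R^3)}$. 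Hence $\|g\|_{L^\infty}\le C\big(\rho^{-3/q}\|g\|_{L^q}+\rho^{1-3/r}\|\D g\|_{L^r}\big)$ for every $\rho>0$. Optimizing the right-hand side over $\rho$—equivalently, applying the scaling $g\mapsto g(\lambda\,\cdot)$, which is compatible with the desired inequality only for exponents $a=\frac{q(r-3)}{3r+q(r-3)}$ and $b=\frac{3r}{3r+q(r-3)}$ (note $a+b=1$), and then choosing $\lambda$ to equalize the two terms—yields $\|g\|_{L^\infty}\le C\|g\|_{L^q}^{a}\|\D g\|_{L^r}^{b}$, as asserted.

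\emph{Main obstacle.} The interpolation and scaling bookkeeping are routine; the substance lies in the two endpoint embeddings and the approximation. For the first estimate the only care is density of $C_c^\infty$ in $H^1$ and convergence of the $L^p$ norms for $p\le6$. For the second, the delicate points are proving the local Morrey estimate with the sharp power $\rho^{1-3/r}$—which is exactly where the hypothesis $r>3$ enters and the argument fails at $r=3$—and justifying the reduction to smooth functions, i.e.\ that $\|g\|_{L^\infty}\le\liminf_{\varepsilon\to0}\|g_\varepsilon\|_{L^\infty}$ for $g\in L^q\cap D^{1,r}$ and that such $g$ can be approximated (a.e.\ and in $D^{1,r}$) by $C_c^\infty$ functions, which holds since $q,r<\infty$. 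Checking that the optimization in $\rho$ reproduces the stated exponents exactly is the one computation worth doing carefully.
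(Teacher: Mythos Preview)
Your proof is correct. The paper itself does not prove this lemma: it simply records it as the well-known Gagliardo--Nirenberg inequality and cites \cite{Ladyzhenskaya1968}. Your argument---reducing to the endpoint embeddings $H^1\hookrightarrow L^6$ (via the Gagliardo--Nirenberg--Sobolev trick applied to $|f|^4$) and the Morrey-type oscillation estimate for $r>3$, followed by interpolation and optimization in the radius $\rho$---is the standard self-contained derivation, and the exponent bookkeeping you carry out matches the statement exactly. In short, you have supplied a full proof where the paper only gives a reference; there is nothing to compare beyond that.
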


From $(\ref{FCNS:2})_2$, in this paper, we denote  $G$ satisfies
\begin{equation}\label{elliptic}
-\Delta G=\div H,
\end{equation}
where 
\begin{equation}\label{G}
    G=(2\mu+\lambda)\div u+R(\rho^{1-\al}\theta-\vr^{1-\al}),
\end{equation}
is the modified effective viscous flux and 
\begin{equation}\label{H}
H=\rho^{-\al}\left(-\rho\dot{u}+R\al\rho\D\theta +2\mu\frD(u)\cdot\D(\rho^\al)+\lambda\div u\D(\rho^\al)\right).
\end{equation}

Then we have the following elementary estimates:
\begin{lemma}
Let $(\rho,u,\theta)$ be a strong solution of \eqref{FCNS:2}--\eqref{data and far}, and the assumptions of Theorem \ref{th} hold. Then there exists a generic positive constant $C$ depending only on $\mu$, $\lambda$, $\kappa$, and $R$ such that, for any $p\in[2, 6]$, it holds
\begin{equation}\label{2.3:2}
\|\D G\|_{L^p}\leq C\|H\|_{L^p}\leq C\vr^{-\al}\left(\|\rho\dot{u}\|_{L^p}+\vr\|\D\theta\|_{L^p}+\vr^{\al-1}\|\D u\cdot\D\rho\|_{L^p}\right),
\end{equation}
\begin{equation}\label{2.3:3}
\|G\|_{L^p}\leq C\|H\|_{L^2}^{\frac{3p-6}{2p}} \left(\|\D u\|_{L^2}+\|\rho^{1-\al}\theta-\vr^{1-\al}\|_{L^2}\right)^{\frac{6-p}{2p}},
\end{equation}
\begin{equation}\label{2.3:4}
\|\D u\|_{L^p}\leq C\|\D u\|_{L^2}^{\frac{6-p}{2p}}\left( \|H\|_{L^2}+\|\rho^{1-\al}\theta-\vr^{1-\al}\|_{L^6}\right)^{\frac{3p-6}{2p}}.
\end{equation}
\end{lemma}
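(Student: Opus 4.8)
The estimates \eqref{2.3:2}--\eqref{2.3:4} should follow from the elliptic relation \eqref{elliptic} together with the representation of $\D u$ through $\div u$ and $\curle\,u$; the plan is to set up those representations and then feed in Calder\'on--Zygmund, Sobolev and interpolation estimates, carrying the powers of $\vr$ through the bound $\tfrac23\vr\le\rho\le\tfrac43\vr$.

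I would first prove \eqref{2.3:2}. Since $u\to0$, $\rho\to\vr$ and $\theta\to1$ as $|x|\to\infty$, the flux $G$ tends to $0$, and by the regularity of the solution $G\in H^1$; hence $\D G$ is the gradient of the $L^2$-solution of $-\Delta G=\div H$, that is, a vector of second-order Riesz transforms applied to $H$. These operators are bounded on $L^p(\R^3)$ for $1<p<\infty$ with norm uniform on the compact range $p\in[2,6]$, so $\|\D G\|_{L^p}\le C\|H\|_{L^p}$. The second inequality in \eqref{2.3:2} is then the pointwise estimate of \eqref{H}: from $\tfrac23\vr\le\rho\le\tfrac43\vr$ one gets $\rho^{-\al}\le C\vr^{-\al}$, $|\D(\rho^\al)|=\al\rho^{\al-1}|\D\rho|\le C\vr^{\al-1}|\D\rho|$, and $|\frD(u)|,|\div u|\le C|\D u|$, whence $|H|\le C\vr^{-\al}(\rho|\dot{u}|+\vr|\D\theta|+\vr^{\al-1}|\D u||\D\rho|)$; taking $L^p$ norms completes \eqref{2.3:2}.

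For \eqref{2.3:3} I would apply the Gagliardo--Nirenberg inequality of the preceding lemma to $G\in H^1$, $\|G\|_{L^p}\le C\|G\|_{L^2}^{\frac{6-p}{2p}}\|\D G\|_{L^2}^{\frac{3p-6}{2p}}$, then bound $\|\D G\|_{L^2}\le C\|H\|_{L^2}$ by \eqref{2.3:2} and $\|G\|_{L^2}\le C(\|\D u\|_{L^2}+\|\rho^{1-\al}\theta-\vr^{1-\al}\|_{L^2})$ directly from the definition of $G$ and $\|\div u\|_{L^2}\le C\|\D u\|_{L^2}$; combining these gives \eqref{2.3:3}. For \eqref{2.3:4}, which needs $\D u$ and not merely $\div u$, I would use that the rearrangement of $(\ref{FCNS:2})_2$ underlying \eqref{elliptic} gives, before taking the divergence, a pointwise identity of the form $\mu\,\curle\,\curle\,u=\D G+H$ (consistent with \eqref{H}). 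Writing $\omega=\curle\,u$, so that $\div\omega=0$ and $\mu\,\curle\,\omega=\D G+H$, the $L^2$ div--curl estimate gives $\|\D\omega\|_{L^2}\le C\|\curle\,\omega\|_{L^2}\le C\|\D G+H\|_{L^2}\le C\|H\|_{L^2}$, hence by $\dot{H}^1(\R^3)\hookrightarrow L^6(\R^3)$ (using $\omega\to0$) one gets $\|\curle\,u\|_{L^6}\le C\|H\|_{L^2}$. On the other hand $(2\mu+\lambda)\div u=G+R(1-\al)^{-1}(\rho^{1-\al}\theta-\vr^{1-\al})$, so $\|\div u\|_{L^6}\le C(\|G\|_{L^6}+\|\rho^{1-\al}\theta-\vr^{1-\al}\|_{L^6})\le C(\|H\|_{L^2}+\|\rho^{1-\al}\theta-\vr^{1-\al}\|_{L^6})$, using $\|G\|_{L^6}\le C\|\D G\|_{L^2}$. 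Since $u\to0$, the div--curl bound $\|\D u\|_{L^6}\le C(\|\div u\|_{L^6}+\|\curle\,u\|_{L^6})$ then yields $\|\D u\|_{L^6}\le C(\|H\|_{L^2}+\|\rho^{1-\al}\theta-\vr^{1-\al}\|_{L^6})$, and the interpolation $\|\D u\|_{L^p}\le\|\D u\|_{L^2}^{\frac{6-p}{2p}}\|\D u\|_{L^6}^{\frac{3p-6}{2p}}$, valid for $p\in[2,6]$, produces \eqref{2.3:4}.

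The only step carrying real content is the pointwise identity $\mu\,\curle\,\curle\,u=\D G+H$, whose divergence is \eqref{elliptic}: one uses $\div(2\frD(u))=\Delta u+\D\div u=2\D\div u-\curle\,\curle\,u$, expands $\div(\rho^\al\frD(u))$ and $\D(\rho^\al\div u)$ by the Leibniz rule, and rewrites $\rho^{-\al}\D P$ as $R(1-\al)^{-1}\D(\rho^{1-\al}\theta)$ plus a multiple of $\rho^{1-\al}\D\theta$, which is precisely what absorbs the pressure into $G$ and the remaining lower-order terms into $H$. Since \eqref{elliptic} is already available, the only point in the estimates themselves that needs some care is the $\curle\,u$ contribution to \eqref{2.3:4} through the div--curl system; everything else is routine singular-integral and interpolation bookkeeping, with the $\vr$-dependence tracked solely through $\tfrac23\vr\le\rho\le\tfrac43\vr$.
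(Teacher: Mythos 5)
Your proposal is correct and follows essentially the same route as the paper: Calder\'on--Zygmund estimates for $-\Delta G=\div H$ together with the pointwise bound on $H$ from $\tfrac23\vr\le\rho\le\tfrac43\vr$ give \eqref{2.3:2}, Gagliardo--Nirenberg interpolation of $G$ gives \eqref{2.3:3}, and the div--curl splitting of $\D u$ with the vorticity equation $\mu\Delta(\D\times u)=\pm\,\D\times H$ gives \eqref{2.3:4}. The only cosmetic difference is that you establish the $p=6$ case of \eqref{2.3:4} and interpolate $\D u$ directly between $L^2$ and $L^6$, whereas the paper interpolates $G$ and $\D\times u$ separately before summing; both yield the stated exponents.
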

\begin{proof}
Applying standard $L^p$-estimate to \eqref{elliptic} and noting the definition of $H$ in \eqref{H}, we obtain \eqref{2.3:2}. By the interpolation inequalities of $L^p$ spaces, we have
\begin{align}\label{2.3:?}
\|G\|_{L^p}\leq \|\D G\|_{L^2}^{\frac{3p-6}{2p}}\|G\|_{L^2}^{\frac{6-p}{2p}},
\end{align}
then \eqref{2.3:3} is followed by \eqref{2.3:2} and \eqref{2.3:?}. Noting that $-\Delta u=-\D\div u+\D\times\D\times u$,
we get
\begin{align*}
-\D u=-\D(-\Delta)^{-1}\D\div u+\D(-\Delta)^{-1}\D\times\D\times u.
\end{align*}
Therefore, standard $L^p$-estimate implies
\begin{align}\label{2.3:??}
\|\D u\|_{L^p}\leq C\left(\|\div u\|_{L^p}+\|\D\times u\|_{L^p}\right).
\end{align}
By the definition of $G$ and \eqref{2.3:??}, we obtain
\begin{equation}\label{2.3:1}
\|\D u\|_{L^p}\leq C\left(\|G\|_{L^p}+\|\D\times u\|_{L^p}+\|\rho^{1-\al} \theta-\vr^{1-\al}\|_{L^p}\right).
\end{equation}
Noting that $\mu\Delta(\D\times u)=\D\times H$, we have
\begin{equation}\label{2.3:???}
\|\D\times u\|_{L^6}\leq C\|\D(\D\times u)\|_{L^2}\leq C\|H\|_{L^2},
\end{equation}
then \eqref{2.3:4} is followed by \eqref{2.3:1}, \eqref{2.3:3}, \eqref{2.3:???}, and the interpolation inequalities of $L^p$ spaces.
\end{proof}	

\section{Global Existence of Large Strong Solution}\label{Section 3}

In this section, we will establish a priori bounds for the local-in-time strong solution to \eqref{FCNS:2}--\eqref{data and far} obtained in Proposition \ref{local}. We thus fix a strong solution
$(\rho,u,\theta)$ of \eqref{FCNS:2}--\eqref{data and far} on $\R^3 \times (0,T]$ for some time $T>0$, with initial data
$(\rho_0,u_0,\theta_0)$ satisfying \eqref{data:condition}.
We denote $A_i(T)$ as follows:
\begin{equation}\label{A1}
A_1(T)=\sup\limits_{t\in[0,T]}\left(\frac{\vr}{2}\right)^\al\|\D u\|_{L^2}^2+\int_0^T\|\sqrt{\rho}\dot{u}\|_{L^2}^2\dif t,
\quad 
A_2(T)=\kappa\sup\limits_{t\in[0,T]}\|\D \theta\|_{L^2}^2+\int_0^T\|\sqrt{\rho}\dot{\theta}\|_{L^2}^2\dif t,
\end{equation}
\begin{equation}\label{A3}
A_3(T)=\sup\limits_{t\in[0,T]}\|\sqrt{\rho}\dot{u}\|_{L^2}^2+\left(\frac{\vr}{2}\right)^\al\int_0^T\|\D \dot{u}\|_{L^2}^2\dif t,
\end{equation}
\begin{equation}\label{A4}
A_4(T)=\sup\limits_{t\in[0,T]}\|\sqrt{\rho}(\theta-1) \|_{L^2}^2+\int_0^T\left(\left(\frac{\vr}{2}\right)^\al\|\D u\|_{L^2}^2+\kappa\|\D\theta\|_{L^2}^2 \right)\dif t,
\end{equation}
\begin{equation}\label{B1}
A_5(T)=M\vr A_{5,1}(T)+M^2\vr A_{5,2}(T)+A_{5,3}(T),
\quad
A_6(T)=\sup\limits_{t\in[0,T]}\sigma^6\|\sqrt{\rho}\dot{\theta}\|_{L^2}^2+\kappa\int_0^T\sigma^6\|\D\dot{\theta}\|_{L^2}^2\dif t,
\end{equation}
\begin{equation}\label{A7}
A_7(T)=\sup_{t\in[0,T]}\|\D \rho\|_{L^2}^2+\vr^{1-\al}\int_0^T\|\D \rho\|_{L^2}^2\dif t,
\quad
A_8(T)=\sup_{t\in[0,T]}\|\D \rho\|_{L^4}^2+\vr^{1-\al}\int_0^T\|\D \rho\|_{L^4}^2\dif t,
\end{equation}
where $\sigma(t) := \min\{1, t\}$ and $M>1$ is a constant to be determined later.
\begin{equation}\label{B1,1}
A_{5,1}(T)=\sup\limits_{t\in[0,T]}\sigma^2\left(\frac{\vr}{2}\right)^\al\|\D u\|_{L^2}^2+\int_0^T\sigma^2\|\sqrt{\rho}\dot{u}\|_{L^2}^2\dif t,
\end{equation}
\begin{equation}\label{B1,2}
A_{5,2}(T)=\kappa\sup\limits_{t\in[0,T]}\sigma^3\|\D\theta\|_{L^2}^2+\int_0^T\sigma^3\|\sqrt{\rho}\dot{\theta}\|_{L^2}^2\dif t,
\end{equation}
\begin{equation}\label{B1,3}
A_{5,3}(T)=\sup\limits_{t\in[0,T]}\sigma^3\|\sqrt{\rho}\dot{u}\|_{L^2}^2+\left(\frac{\vr}{2}\right)^\al\int_0^T\sigma^3\|\D \dot{u}\|_{L^2}^2\dif t.
\end{equation}

We then have the following key a priori estimates on $(\rho,u,\theta)$.

\begin{proposition}\label{p4.1}
For given $\vr>1$, assume that $(\rho_0,u_0,\theta_0)$ satisfies \eqref{ic-1}--\eqref{compatibility condition}, then there exist positive constants $K_i$ and $\rL$ depending on $\mu$, $\lambda$, $\kappa$, $R$, $C_0$, $u_0$, and $\theta_0$, such that if $(\rho,u,\theta)$ is a strong solution of \eqref{FCNS:2}--\eqref{data and far} on $\R^3\times(0,T]$, for any $T>0$, satisfying
\begin{equation}\label{assume:1}
A_1(\sigma(T))\leq 2K_1\vr^\al, ~ A_2(\sigma(T))\leq 2K_2\vr^\al, ~ A_3(\sigma(T))\leq 2K_3\vr^{2\al-1}, ~ A_4(\sigma(T))\leq 2K_4\vr, 
\end{equation}
\begin{equation}\label{assume:2}
A_5(T)\leq 2K_6\vr^2, \quad A_6(T)\leq 2K_7\vr^2,
\end{equation}
\begin{equation}\label{assume:1.75}
\|\theta-1\|_{L^\infty}^2\leq 2K_8\vr^2, \qquad\text{for} ~ t\in[\sigma(T),T],
\end{equation}
\begin{equation}\label{assume:3}
A_7(T)\leq 2K_9\vr^2, \quad A_8(T)\leq 2\vr^{\frac{3}{2}}, \quad \frac{1}{2}\vr\leq\rho\leq\frac{3}{2}\vr,
\end{equation}
we have
\begin{equation}\label{goal:1}
A_1(\sigma(T))\leq K_1\vr^\al, ~ A_2(\sigma(T))\leq K_2\vr^\al, ~ A_3(\sigma(T))\leq K_3\vr^{2\al-1}, ~ A_4(\sigma(T))\leq K_4\vr,
\end{equation}
\begin{equation}\label{goal:2}
A_5(T)\leq K_6\vr^2, \quad A_6(T)\leq K_7\vr^2,
\end{equation}
\begin{equation}\label{goal:1.75}
\|\theta-1\|_{L^\infty}^2\leq K_8\vr^2, \qquad\text{for} ~ t\in[\sigma(T),T],
\end{equation}
\begin{equation}\label{goal:3}
A_7(T)\leq K_9\vr^2, \quad A_8(T)\leq \vr^{\frac{3}{2}}, \quad \frac{2}{3}\vr\leq\rho\leq\frac{4}{3}\vr,
\end{equation}
provided $\vr\geq\rL$.
\end{proposition}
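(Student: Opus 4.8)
\emph{Overall strategy.} This proposition is the closing step of a continuity argument, so the plan is to assume the ``doubled'' bounds \eqref{assume:1}--\eqref{assume:3} throughout $[0,T]$ and recover the sharper ones \eqref{goal:1}--\eqref{goal:3}, producing the gain entirely from the largeness of $\vr$ (no smallness of $u_0,\theta_0$ is invoked): since the viscosity is $\mu\rho^\al\sim\mu\vr^\al$ and $\al>4$, every nonlinear term will turn out to come with a strictly negative net power of $\vr$. The constant tool is the decomposition
\[ (2\mu+\lambda)\div u = G + \frac{R}{1-\al}\big(\rho^{1-\al}\theta-\vr^{1-\al}\big), \]
which, combined with the elliptic bounds \eqref{2.3:2}--\eqref{2.3:4}, lets one exchange $\D u$ and $\D^2u$ for $\rho\dot u$, $\D\theta$, $\D u\cdot\D\rho$ at the price of a controlled power of $\vr$. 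Because the initial data lie only in $H^2$ with no compatibility for $u_t$, the top-order bounds live only in the $\sigma$-weighted form $B_i$ near $t=0$; I would keep the unweighted $A_i$ on $[0,\sigma(T)]$ and let the $B_i$ carry information to later times.

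\emph{Energy, first-order and second-order estimates.} First I would run a basic energy/relative-entropy estimate — test $(\ref{FCNS:2})_2$ with $u$ and $(\ref{FCNS:2})_3$ with suitable functions of $\theta$ (exploiting the structure of the viscous heating on the right-hand side of $(\ref{FCNS:2})_3$), together with \eqref{compatibility condition:2} and $\int\rho_0|u_0|^2\sim\vr$ — to obtain $A_4(\sigma(T))\le K_4\vr$ and, crucially, a smallness $\int_0^T\|\D u\|_{L^2}^2\lesssim\vr^{1-\al}$. Next, testing $(\ref{FCNS:2})_2$ with $\dot u$ and $(\ref{FCNS:2})_3$ with $\dot\theta$, integrating by parts and inserting $G$, gives differential inequalities of the shape $\frac{\dif}{\dif t}\big((\mu+\tfrac{\lambda}{2})\int\rho^\al|\D u|^2\big)+\int\rho|\dot u|^2\lesssim(\text{l.o.t.})$; the lower-order terms, handled through \eqref{2.3:2}--\eqref{2.3:4}, Gagliardo--Nirenberg and \eqref{assume:1}--\eqref{assume:3}, always carry the small factor $\|\D u\|_{L^2}$ (or its time integral) wherever a dangerous $\vr^\al$ appears, so that after integrating — first against $\sigma^2$ resp. $\sigma^3$ to reach $B_2,B_3$, unweighted on $[0,\sigma(T)]$ for $A_1,A_2$ — one gets $A_1(\sigma(T))\le K_1\vr^\al$ and $A_2(\sigma(T))\le K_2\vr^\al$. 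Applying the material derivative to $(\ref{FCNS:2})_2$, testing with $\dot u$ (weighted by $\sigma^3$ for $B_4$, unweighted for $A_3$), and similarly to $(\ref{FCNS:2})_3$ weighted by $\sigma^6$ for $B_5$, yields $A_3(\sigma(T))\le K_3\vr^{2\al-1}$ and $B_5(T)\le K_7\vr^2$; combining $B_2,B_3,B_4$ with the weights $M\vr,M^2\vr,1$ of \eqref{B1} and choosing $M$, then $\vr$, large closes $B_1(T)\le K_6\vr^2$.

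\emph{Density bounds, $\theta$ in $L^\infty$, and closing the loop.} For the density, $\D\rho$ solves a transport equation coming from $(\ref{FCNS:2})_1$; pairing it with $\D\rho$ in $L^2$ and with $|\D\rho|^2\D\rho$ in $L^4$ and rewriting $\D\div u$ via $\D G$, the differentiation of $\rho^{1-\al}$ creates a damping term $\sim\vr^{1-\al}|\D\rho|^2$ which, for $\vr$ large, dominates and feeds the dissipation integrals in $A_5,A_6$; a Gronwall argument using \eqref{2.3:2}, the $B$-bounds and \eqref{compatibility condition:1} gives $A_5(T)\le K_9\vr^2$ and $A_6(T)\le\vr^{3/2}$. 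The pointwise bound $\tfrac{2}{3}\vr\le\rho\le\tfrac{4}{3}\vr$ follows by integrating $\tfrac{D}{Dt}\ln\rho=-\div u=-(2\mu+\lambda)^{-1}\big(G+\tfrac{R}{1-\al}(\rho^{1-\al}\theta-\vr^{1-\al})\big)$ along particle paths and estimating $\int_0^T\|G\|_{L^\infty}$ (Gagliardo--Nirenberg from $\|G\|_{L^2}$, $\|\D G\|_{L^6}$ and the $B$-bounds) and the $O(\vr^{1-\al})$ pressure term, both small for $\vr\ge\rL$. Finally, for $t\in[\sigma(T),T]$ an $L^p$-iteration (or $H^2$ elliptic regularity on $(\ref{FCNS:2})_3$ plus $\|\theta-1\|_{L^\infty}\lesssim\|\D\theta\|_{L^2}^{1/2}\|\D^2\theta\|_{L^2}^{1/2}$) using the bounds already obtained gives $\|\theta-1\|_{L^\infty}^2\le K_8\vr^2$. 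Fixing $K_4$, then $K_1,K_2$, then $K_3,K_6,K_7$, then $M$, then $K_9$, then $K_8$, and finally $\rL$ large enough that every residual factor $\vr^{2-\al}$, $\vr^{1-\al}$, $\vr^{-1/2}$ is $\le\tfrac12$ delivers \eqref{goal:1}--\eqref{goal:3}.

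\emph{Main obstacle.} The real difficulty is not any single estimate but the exponent bookkeeping: after the flux estimates and interpolation, \emph{every} nonlinear contribution must be verified to carry a power of $\vr$ strictly below that of the matching term in the definitions of $A_i$ and $B_i$; this is exactly what pins down the threshold $\al>4$, and one mis-counted exponent breaks the loop. A secondary, structural difficulty is the $t=0$ boundary layer: with only $H^2$ data and no compatibility for $u_t$, one must carefully separate $[0,\sigma(T)]$ from $[\sigma(T),T]$, route the pointwise density control (which needs $\int_0^T\|\div u\|_{L^\infty}$, hence $\|\D\rho\|_{L^4}$ and $\|\D G\|_{L^6}$) through the $\sigma$-weighted bounds, and likewise only claim $\|\theta-1\|_{L^\infty}$ past $\sigma(T)$.
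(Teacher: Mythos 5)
Your outline reproduces the architecture of the paper's proof: the same bootstrap through the modified effective viscous flux $G$, the same hierarchy of estimates (basic entropy/energy, $\dot u$- and $\dot\theta$-tested estimates for $A_1,\dots,A_4$ on $[0,\sigma(T)]$, $\sigma$-weighted versions $B_2,B_3,B_4$ combined with weights $M\vr,M^2\vr,1$ to close $B_1$, the $\sigma^6$-weighted $\dot\theta$ estimate for $B_5$, the $\vr^{1-\al}|\D\rho|^r$ damping term and Gronwall for $A_5,A_6$), and the same separation of the initial layer $[0,\sigma(T)]$ from $[\sigma(T),T]$. One substantive divergence: for the pointwise bound $\tfrac{2}{3}\vr\le\rho\le\tfrac{4}{3}\vr$ you propose integrating $\tfrac{D}{Dt}\ln\rho=-\div u$ along particle paths and controlling $\int_0^T\|G\|_{L^\infty}\dif t$; the paper instead reads the bound off directly from the already-established $W^{1,2}\cap W^{1,4}$ control via $\|\rho-\vr\|_{L^\infty}\le C\|\D\rho\|_{L^2}^{1/3}\|\D\rho\|_{L^4}^{2/3}\le C\vr^{5/6}\le\vr/3$. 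The paper's route is strictly cheaper: it needs no $L^\infty$-in-space information on $G$ or $\div u$ (which is delicate near $t=0$ with only $H^2$ data and which your own remarks identify as a pain point), whereas your Lagrangian argument is the standard device when only $L^2$-type bounds on $\D\rho$ are available --- here $A_5,A_6$ make it unnecessary.

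Two cautions. First, the claimed consequence of the energy estimate, $\int_0^T\|\D u\|_{L^2}^2\dif t\lesssim\vr^{1-\al}$, is mis-counted: the entropy inequality controls $\int_0^T\int\theta^{-1}\rho^\al|\frD(u)|^2\lesssim\vr$, and removing $\theta^{-1}$ on $[\sigma(T),T]$ costs $\|\theta\|_{L^\infty}\lesssim\vr$, so the correct bound (the paper's Lemma~\ref{sky}) is $K_5\vr^{2-\al}$; only the initial-layer piece is $O(\vr^{1-\al})$. Since $\al>4$ this does not break the scheme, but it is exactly the kind of exponent slip you warn against. Second, and more importantly, the proposal defers the entire exponent verification --- the closing of each constant $K_i$ against $C(K)\vr^{\text{negative power}}$ remainders, the auxiliary bounds on $\|\D u\|_{L^6}$, $\|\D^2u\|_{L^4}$, $\|\D^2\theta\|_{L^2}$ that feed the Gronwall factors, and the resulting thresholds $\rL_1,\dots,\rL_8$ --- which is where the hypothesis $\al>4$ is actually consumed. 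As a strategy the proposal is sound and matches the paper; as a proof it is incomplete precisely at the step you yourself identify as the real difficulty.
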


Next, we give some lemmas to prove Proposition \ref{p4.1}.
\subsection{Lower-order uniform estimates of $(\rho,u,\theta)$ on $[0,T]$}\label{sub:3.1}

In this subsection, we use $C(K)$ to denote a constant that may depend on $K_i$. The constant $C_i$ ($i\in\mathbb{N}_+$) denotes the coefficient of leading terms with respect to $\vr$. We begin with the following standard energy estimate of $(\rho,u,\theta)$.

\begin{lemma}\label{Lem3.1}
Under the conditions of Proposition \ref{p4.1}, there exists a positive constant $C$ depending on $\mu$, $\lambda$, $\kappa$, $C_0$, and $R$ such that if $(\rho,u,\theta)$ is a strong solution of \eqref{FCNS:2}--\eqref{data and far} on $\R^3\times(0,T]$, the following estimates hold for all $t\in(0,T]:$
\begin{equation}\label{lem3.1:g1}
\|u\|_{L^2}\leq C,
\end{equation}
\begin{equation}\label{lem3.1:g2}
\|\rho-\vr\|_{L^2}\leq C\vr,
\end{equation}
\begin{equation}\label{lem3.1:g3}
\|\theta-1\|_{L^2}\leq C\left(\|\D\theta\|_{L^2}+1\right).
\end{equation}
\end{lemma}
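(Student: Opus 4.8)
The plan is to prove first the basic relative-entropy estimate for \eqref{FCNS:2} — precisely the quantity whose initial value appears in \eqref{compatibility condition:2} — and then to read off \eqref{lem3.1:g1}--\eqref{lem3.1:g3} from it, using the two-sided bound $\tfrac12\vr\le\rho\le\tfrac32\vr$ assumed in \eqref{assume:3}.

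For the energy estimate I would combine the three equations of \eqref{FCNS:2} in the standard way. Multiplying $(\ref{FCNS:2})_2$ by $u$ and integrating by parts (using $(\ref{FCNS:2})_1$) gives a kinetic energy identity whose dissipation is $\int\Psi\,\dif x$, with $\Psi:=2\mu\rho^\al|\frD(u)|^2+\lambda\rho^\al(\div u)^2\ge0$, and whose forcing is the pressure work $R\int\rho\theta\,\div u\,\dif x$; integrating $(\ref{FCNS:2})_3$ over $\R^3$ gives $\frac{\dif}{\dif t}\int\rho(\theta-1)\,\dif x=\int\Psi\,\dif x-R\int\rho\theta\,\div u\,\dif x$, and adding the two the dissipation and pressure work cancel — this is conservation of the relative total energy $\int[\tfrac12\rho|u|^2+\rho(\theta-1)]\,\dif x$. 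Dividing $(\ref{FCNS:2})_3$ by $\theta$ and using $(\ref{FCNS:2})_1$ turns it into the entropy balance $\rho\dot s=\theta^{-1}\Psi+\kappa\theta^{-1}\Delta\theta$ with $s=\ln\theta-R\ln\rho$; integrating and using $\int\kappa\theta^{-1}\Delta\theta\,\dif x=\int\kappa\theta^{-2}|\D\theta|^2\,\dif x$ gives $\frac{\dif}{\dif t}\int\rho s\,\dif x=\int\theta^{-1}\Psi\,\dif x+\int\kappa\theta^{-2}|\D\theta|^2\,\dif x\ge0$. Subtracting this from total-energy conservation yields a nonpositive time derivative; after the usual care (boundary terms at spatial infinity vanish, and $\int\div u\,\dif x=0$, $\frac{\dif}{\dif t}\int\rho\,\dif x=0$ dispose of the terms involving only the far-field constants) one arrives at
\begin{equation*}
\int\!\Big(\tfrac12\rho|u|^2+\rho(\theta-\ln\theta-1)+R(\rho\ln\rho-\rho-\rho\ln\vr+\vr)\Big)(\cdot,t)\,\dif x\le\int\!\Big(\tfrac12\rho_0|u_0|^2+\rho_0(\theta_0-\ln\theta_0-1)+R(\rho_0\ln\rho_0-\rho_0-\rho_0\ln\vr+\vr)\Big)\dif x .
\end{equation*}
Each of the three integrands on the left is pointwise nonnegative, and by \eqref{compatibility condition:2} (which absorbs the constant $R$), $\rho_0\le\tfrac54\vr$, and the bound on the initial kinetic energy coming from the hypotheses on $u_0$, the right-hand side is $\le C\vr$.

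The three estimates now follow directly. Since $\rho\ge\tfrac12\vr$, $\|u\|_{L^2}^2\le\tfrac2\vr\int\rho|u|^2\,\dif x\le C$, which is \eqref{lem3.1:g1}. With $b(\rho):=\rho\ln\rho-\rho-\rho\ln\vr+\vr$, Taylor expansion about the minimum at $\rho=\vr$ (where $b=b'=0$ and $b''(\rho)=\rho^{-1}\ge\tfrac{2}{3\vr}$ on $[\tfrac12\vr,\tfrac32\vr]$) gives $b(\rho)\ge\tfrac1{3\vr}(\rho-\vr)^2$, hence $\|\rho-\vr\|_{L^2}^2\le 3\vr\int b(\rho)\,\dif x\le C\vr^2$, which is \eqref{lem3.1:g2}. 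Finally $\rho\ge\tfrac12\vr$ also gives $\int(\theta-\ln\theta-1)\,\dif x\le C$; since $\theta-\ln\theta-1\ge c_0(\theta-1)^2$ on $\{\theta\le 2\}$ and $\ge c_1(\theta-1)$ on $\{\theta>2\}$ for suitable positive constants $c_0,c_1$, this yields both $\int_{\{\theta\le 2\}}(\theta-1)^2\,\dif x\le C$ and $|\{\theta>2\}|\le\int_{\{\theta>2\}}(\theta-1)\,\dif x\le C$; then Hölder's inequality and the Sobolev embedding $\|\theta-1\|_{L^6}\le C\|\D\theta\|_{L^2}$ give $\int_{\{\theta>2\}}(\theta-1)^2\,\dif x\le|\{\theta>2\}|^{2/3}\|\theta-1\|_{L^6}^2\le C\|\D\theta\|_{L^2}^2$, so that $\|\theta-1\|_{L^2}^2\le C+C\|\D\theta\|_{L^2}^2$, which is \eqref{lem3.1:g3}.

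The algebraic bookkeeping in combining the three equations is routine. The one genuinely nonobvious step is the last one — extracting $\|\theta-1\|_{L^2}\le C(\|\D\theta\|_{L^2}+1)$ from the purely $L^1$-type control $\int(\theta-\ln\theta-1)\,\dif x\le C$ — because that control bounds $(\theta-1)^2$ only where $\theta$ is moderate and bounds merely $(\theta-1)$ where $\theta$ is large, so the superlevel set $\{\theta>2\}$ must be treated by interpolating its (necessarily small) Lebesgue measure against the Sobolev bound on $\|\theta-1\|_{L^6}$. A secondary technical matter, glossed over above, is the verification that the relative-entropy integral is finite and that all boundary contributions at infinity vanish; this is exactly why one works with the nonnegative combination in the display rather than with kinetic energy, internal energy, and entropy separately, since these three are individually not integrable against the nonzero far-field state $(\vr,0,1)$.
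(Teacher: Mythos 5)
Your proposal is correct and follows essentially the same route as the paper: the paper obtains your relative-entropy inequality in one stroke by testing $(\ref{FCNS:2})_2$ with $u$ and $(\ref{FCNS:2})_3$ with $1-\theta^{-1}$ (algebraically identical to your ``energy minus entropy'' combination), then uses the same pointwise convexity bounds $\rho\ln\rho-\rho-\rho\ln\vr+\vr\geq\tfrac{1}{3\vr}(\rho-\vr)^2$ and the splitting of $\theta-\ln\theta-1$ over $\{\theta>2\}$ and $\{\theta<3\}$, together with the lower bound $\rho\geq\tfrac12\vr$. For \eqref{lem3.1:g3} the paper interpolates $\|\theta-1\|_{L^2(\theta>2)}\leq\|\theta-1\|_{L^1(\theta>2)}^{2/5}\|\theta-1\|_{L^6}^{3/5}$ rather than passing through the measure of the superlevel set, but this is the same H\"older argument in a different guise.
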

\begin{proof}
Adding $(\ref{FCNS:2})_2$ multiplied by $u$ to $(\ref{FCNS:2})_3$ multiplied by $1-\theta^{-1}$, we obtain after integrating the resulting equality over $\R^3$ and using $(\ref{FCNS:2})_1$ that
\begin{align}\label{Lem3.1:1}
&\frac{\dif}{\dif t}\int\left(\frac{1}{2}\rho|u|^2+R(\rho\ln\rho-\rho-\rho\ln\vr+\vr)+\rho(\theta-\ln\theta-1)\right)\dif x\nonumber\\
=~&-\int\left(\lambda\rho^\al(\div u)^2+2\mu\rho^\al |\frD(u)|^2\right)\dif x-\int\frac{\kappa}{\theta^2} |\D\theta|^2\dif x+\int\left(1-\frac{1}{\theta}\right)\left(\lambda\rho^\al(\div u)^2+2\mu\rho^\al|\frD(u)|^2\right)\dif x\nonumber\\
=~&-\int\left(\frac{1}{\theta}\left(\lambda\rho^\al(\div u)^2+2\mu\rho^\al|\frD(u)|^2\right)+\frac{\kappa}{\theta^2} |\D\theta|^2\right)\dif x.
\end{align}

It follows from the identity
\begin{align*}
\rho\ln\rho-\rho-\rho\ln\vr+\vr=\vr\left(\frac{\rho}{\vr}-1\right)^2\int_0^1\frac{1-y}{y\left(\frac{\rho}{\vr}-1\right)+1}\dif y
\end{align*}
that
\begin{align}\label{Lem3.1:2}
\frac{1}{3\vr}(\rho-\vr)^2\leq\rho\ln\rho-\rho-\rho\ln\vr+\vr\leq\frac{2}{\vr}(\rho-\vr)^2.
\end{align}

Similarly, it follows from the identity
\begin{align*}
\theta-\ln\theta-1=(\theta-1)^2\int_0^1\frac{y}{y(\theta-1)+1}\dif y
\end{align*}
that
\begin{align}\label{Lem3.1:3}
\frac{1}{8}(\theta-1)\chi_{\{\theta(t)>2\}}+\frac{1}{12}(\theta-1)^2\chi_{\{\theta(t)<3\}}\leq\theta-\ln\theta-1\leq(1+\theta^{-1})(\theta-1)^2,
\end{align}
where $\chi_A$ is the characteristic function on the set $A$, and
\begin{align*}
\{\theta(t)>2\}:=&\left\{x\in\R^3|\theta(x,t)>2\right\},\\
\{\theta(t)<3\}:=&\left\{x\in\R^3|\theta(x,t)<3\right\}.
\end{align*}

Integrating \eqref{Lem3.1:1} with respect to t over $(0, T)$ yields
\begin{align}\label{Lem3.1:3-1}
&\sup\limits_{t\in[0,T]}\int\left(\frac{1}{2}\rho|u|^2+R (\rho\ln\rho-\rho-\rho\ln\vr+\vr)+\rho(\theta-\ln\theta-1)\right)\dif x\nonumber\\
&+\int_0^T\int\left(\frac{1}{\theta}\left(\lambda\rho^\al(\div u)^2+2\mu\rho^\al|\frD(u)|^2\right)+\frac{\kappa}{\theta^2} |\D\theta|^2\right)\dif x\dif t\nonumber\\
\leq~&\int\left(\frac{1}{2}\rho_0|u_0|^2+R (\rho_0\ln\rho_0-\rho_0-\rho_0\ln\vr+\vr)+\rho_0(\theta_0-\ln\theta_0-1)\right)\dif x,
\end{align}
which together with \eqref{Lem3.1:2} and \eqref{Lem3.1:3} leads to 
\begin{align}\label{Lem3.1:3-2}
&\sup\limits_{t\in[0,T]}\int\left(\rho|u|^2+\vr^{-1}(\rho-\vr)^2\right)\dif x+\sup\limits_{t\in[0,T]}\int\left(\rho(\theta-1)\chi_{\{\theta(t)>2\}}+\rho(\theta-1)^2\chi_{\{\theta(t)<3\}}\right)\dif x\nonumber\\
&+\int_0^T\int\left(\frac{1}{\theta}\left(\lambda\rho^\al(\div u)^2+2\mu\rho^\al|\frD(u)|^2\right)+\frac{\kappa}{\theta^2} |\D\theta|^2\right)\dif x\dif t\nonumber\\
\leq~&\vr\|u_0\|_{L^2}^2+2R\vr^{-1}\|\rho_0-\vr\|_{L^2}^2+\left(1+\underline{\theta}\right)\vr\|\theta_0-1\|_{L^2}^2\nonumber\\
\leq ~& C\vr.
\end{align}
This directly gives \eqref{lem3.1:g1} and \eqref{lem3.1:g2}.

Next, we shall prove \eqref{lem3.1:g3}. Direct computation shows that
\begin{equation}\label{Lem3.1:4}
\|\theta-1\|_{L^2(\theta(t)<3)}\leq C, \quad \|\theta-1\|_{L^1(\theta(t)>2)}\leq C,
\end{equation}
which together with H\"{o}lder’s inequality and Sobolev embedding theorem leads to
\begin{align}\label{Lem3.1:5}
\|\theta-1\|_{L^2(\theta>2)}\leq\|\theta-1\|_{L^1(\theta>2) }^{\frac{2}{5}}\|\theta-1\|_{L^6}^{\frac{3}{5}}\leq C\|\D\theta\|_{L^2}^{\frac{3}{5}}\leq C\left(\|\D\theta\|_{L^2}+1\right).
\end{align}
Combining \eqref{Lem3.1:4} and \eqref{Lem3.1:5} yields \eqref{lem3.1:g3} directly. The proof of Lemma \ref{Lem3.1} is finished.
\end{proof}

\subsubsection{Uniform estimates of velocity and temperature on $[0,\sigma(T)]$}\label{sec3.1}

In the following Lemma \ref{Lem33}--\ref{Lem35}, we give some uniform estimates on $A_i(\sigma(T)), i=1,\cdots,4$. First, the first-order estimates of velocity and temperature are given in Lemma \ref{Lem33}.
\begin{lemma}\label{Lem33}
Under the conditions of Proposition \ref{p4.1}, there exist positive constants $K_1$, $K_2$ and $\rL_1$ depending on $\mu$, $\lambda$, $\kappa$, $R$, $C_0$, $u_0$, and $\theta_0$, such that if $(\rho,u,\theta)$ is a strong solution of \eqref{FCNS:2}--\eqref{data and far} on $\R^3\times(0,T]$, the following estimate holds
\begin{equation}\label{g:3.3:1}
A_1(\sigma(T))=\sup\limits_{t\in[0,\sigma(T)]}\left(\frac{\vr}{2}\right)^\al\|\D u\|_{L^2}^2+\int_0^{\sigma(T)}\int \rho|\dot{u}|^2\dif x\dif t\leq K_1\vr^\al,
\end{equation}
and
\begin{equation}\label{g:3.3:2}
A_2(\sigma(T))=\kappa\sup\limits_{t\in[0,\sigma(T)]}\|\D \theta\|_{L^2}^2+\int_0^{\sigma(T)}\int \rho|\dot{\theta}|^2\dif x\dif t\leq K_2\vr^\al,
\end{equation}
provided that $\vr\geq \rL_1$.
\end{lemma}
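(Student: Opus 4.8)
The plan is to run the first differentiated energy estimates — test $(\ref{FCNS:2})_2$ against $\dot u$ and $(\ref{FCNS:2})_3$ against $\dot\theta$ — and to close the resulting coupled differential inequalities by Gronwall on the short interval $[0,\sigma(T)]$, whose length is $\le1$; the point is that the initial velocity/temperature energies are finite because $u_0,\theta_0\in H^2$, while the terms fed by the a~priori bounds \eqref{assume:1}--\eqref{assume:3} are of lower order in $\vr$ when $\vr$ is large. \textbf{Step 1 (velocity).} Multiplying $(\ref{FCNS:2})_2$ by $\dot u$ and integrating over $\R^3$ gives $\int\rho|\dot u|^2\dif x$ on the left; on the right, integrating the viscous terms by parts (writing $\frD(u):\D\dot u=\tfrac12\partial_t|\frD(u)|^2+\frD(u):\D(u\cdot\D u)$ and likewise for $\div u\,\div\dot u$) produces $-\tfrac{\dif}{\dif t}\int(\mu\rho^\al|\frD(u)|^2+\tfrac\lambda2\rho^\al(\div u)^2)\dif x$, a commutator $\tfrac12\int\partial_t(\rho^\al)(2\mu|\frD(u)|^2+\lambda(\div u)^2)\dif x$ with $\partial_t\rho^\al=-\al\rho^{\al-1}\div(\rho u)$, and cubic terms $\int\rho^\al\frD(u):\D(u\cdot\D u)\dif x$; the pressure term becomes, via $P-R\vr$, $\tfrac{\dif}{\dif t}\int(P-R\vr)\div u\,\dif x-\int P_t\div u\,\dif x+\int(P-R\vr)\div(u\cdot\D u)\,\dif x$, and $P_t$ is eliminated through the pressure equation $P_t+\div(Pu)+RP\div u=R(2\mu\rho^\al|\frD(u)|^2+\lambda\rho^\al(\div u)^2+\kappa\Delta\theta)$. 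Writing $\Phi_u:=\int(\mu\rho^\al|\frD(u)|^2+\tfrac\lambda2\rho^\al(\div u)^2)\dif x-\int(P-R\vr)\div u\,\dif x$ one gets $\tfrac{\dif}{\dif t}\Phi_u+\int\rho|\dot u|^2\dif x\le\mathcal R_u(t)$, where each term of $\mathcal R_u$ is bounded using Gagliardo--Nirenberg, $\tfrac12\vr\le\rho\le\tfrac32\vr$, Lemma~\ref{Lem3.1}, the ellipticity of the viscous operator (from $\mu>0$, $2\mu+3\lambda\ge0$), the effective-viscous-flux estimates \eqref{2.3:2}--\eqref{2.3:4}, and the a~priori bounds (the $\D\rho$ factors via $A_5,A_6$; the $\kappa\Delta\theta$ contribution, after one integration by parts, becomes $R\kappa\int\D\theta\cdot\D\div u\,\dif x$, handled by replacing $\D\div u$ through $\D G$ and $\D(\rho^{1-\al}\theta-\vr^{1-\al})$ and invoking \eqref{2.3:2}); a small part of $\mathcal R_u$ is absorbed into $\int\rho|\dot u|^2$.

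\textbf{Step 2 (temperature).} Multiplying $(\ref{FCNS:2})_3$ by $\dot\theta$ and integrating, the left side is $\int\rho|\dot\theta|^2\dif x$; the term $\int\kappa\Delta\theta\,\dot\theta\,\dif x$ yields $-\tfrac\kappa2\tfrac{\dif}{\dif t}\|\D\theta\|_{L^2}^2$ plus $\int\kappa\Delta\theta\,(u\cdot\D\theta)\,\dif x$; the term $-\int P\div u\,\dot\theta\,\dif x$ is split by Cauchy--Schwarz with the $\dot\theta$-factor absorbed. The delicate term is $\int(2\mu\rho^\al|\frD(u)|^2+\lambda\rho^\al(\div u)^2)\dot\theta\,\dif x$: one estimates $\|\rho^\al|\D u|^2\|_{L^2}\le C\vr^\al\|\D u\|_{L^4}^2$, controls $\|\D u\|_{L^4}$ from \eqref{2.3:4} with $p=4$ together with a self-improving step (using $\|\D u\|_{L^2}^2\le C(K)$ from $A_1$, $\|\rho\dot u\|_{L^2}^2\le C\vr\int\rho|\dot u|^2$, $\|\D\theta\|_{L^2}$ from $A_2$, and the $\vr$-smallness of $\|\rho^{1-\al}\theta-\vr^{1-\al}\|_{L^6}$), and absorbs the $\dot\theta$-factor into $\int\rho|\dot\theta|^2$, the resulting power of $\vr$ matching $\vr^\al$ because $\al>4$. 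Setting $\Phi_\theta:=\tfrac\kappa2\|\D\theta\|_{L^2}^2$ gives $\tfrac{\dif}{\dif t}\Phi_\theta+\int\rho|\dot\theta|^2\dif x\le\mathcal R_\theta(t)$ with $\mathcal R_\theta$ estimated the same way.

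\textbf{Step 3 (closing).} Add the two inequalities. Since $\int\rho^\al|\frD(u)|^2\dif x\ge c(\vr/2)^\al\|\D u\|_{L^2}^2$ up to $\D\rho$-errors and $|\int(P-R\vr)\div u\,\dif x|\le\tfrac12 c(\vr/2)^\al\|\D u\|_{L^2}^2+C(\vr/2)^{-\al}\|P-R\vr\|_{L^2}^2$ with $\|P-R\vr\|_{L^2}\le C\vr(\|\D\theta\|_{L^2}+1)$ (Lemma~\ref{Lem3.1}), $\Phi_u+\Phi_\theta$ is comparable to $(\vr/2)^\al\|\D u\|_{L^2}^2+\kappa\|\D\theta\|_{L^2}^2$ up to terms of lower order in $\vr$, and one arrives at $\tfrac{\dif}{\dif t}(\Phi_u+\Phi_\theta)+c\int(\rho|\dot u|^2+\rho|\dot\theta|^2)\dif x\le\Lambda(t)(\Phi_u+\Phi_\theta)+g(t)$ with $\int_0^{\sigma(T)}\Lambda\,\dif t\le C$ (using the a~priori bounds and $\sigma(T)\le1$) and $\int_0^{\sigma(T)}g\,\dif t\le C\vr^\al$. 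Because $u_0,\theta_0\in H^2$ and \eqref{compatibility condition:1}--\eqref{compatibility condition:2} hold, $\Phi_u(0)+\Phi_\theta(0)\le C(u_0,\theta_0)\vr^\al$, so Gronwall yields $A_1(\sigma(T))\le K_1\vr^\al$ and $A_2(\sigma(T))\le K_2\vr^\al$ with $K_1,K_2$ and $\rL_1$ depending only on $\mu,\lambda,\kappa,R,C_0,u_0,\theta_0$, provided $\vr\ge\rL_1$.

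\textbf{Main obstacle.} The hardest point is the temperature source term $\int\rho^\al|\D u|^2\dot\theta\,\dif x$: pushing $\|\D u\|_{L^4}$ through \eqref{2.3:2}--\eqref{2.3:4} and the control of $\int\rho|\dot u|^2$ while tracking all powers of $\vr$, and then checking that the whole contribution is genuinely lower order so that it can be absorbed once $\vr\ge\rL_1$. A secondary difficulty is the pressure-induced coupling — through $P_t$ and the $\kappa\Delta\theta$ term — which forbids treating the velocity and temperature estimates separately and forces the simultaneous Gronwall argument above; keeping $K_1,K_2$ (and $\rL_1$) independent of the remaining $K_i$ also requires some care, namely ensuring every $K_i$-dependent contribution carries a negative power of $\vr$.
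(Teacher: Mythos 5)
Your overall strategy is essentially the paper's: test the momentum equation with (a multiple of) $u_t$, equivalently $\dot u$, and the energy equation with $\theta_t$, equivalently $\dot\theta$; route the pressure coupling through the modified effective viscous flux $G$ and the pressure equation \eqref{eqn:P}; control $\|\D u\|_{L^6}$ and $\|\D G\|_{L^2}$ via \eqref{2.3:2}--\eqref{2.3:4} and the bootstrap hypotheses \eqref{assume:1}--\eqref{assume:3}; integrate over $[0,\sigma(T)]$ and absorb remainders for $\vr\ge\rL_1$. (The paper integrates directly rather than invoking Gronwall, but that is cosmetic.)

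There is, however, a genuine defect in your closing step. You assert that keeping $K_1,K_2$ under control amounts to ``ensuring every $K_i$-dependent contribution carries a negative power of $\vr$,'' i.e.\ is of strictly lower order than $\vr^\al$. For the temperature estimate this is impossible, and your own Step~2 already concedes it: the dissipation term $\int\rho^\al|\D u|^2\dot\theta\,\dif x$ produces, after absorbing the $\dot\theta$ factor, a contribution of the form $C\vr^{2\al-1}\int_0^{\sigma(T)}\|\D u\|_{L^2}\|\D u\|_{L^6}^3\,\dif t$, which by \eqref{estimate:DuL6:2}--\eqref{estimate:DuL6:3} is of size $C(K)\,\vr^{2\al-1}\cdot\vr^{1-\al}=C(K)\,\vr^{\al}$ --- exactly order $\vr^\al$, with a coefficient depending on several of the bootstrap constants ($K_1$, $K_9$, and the constant controlling $\|\D u\|_{L^6}$ on $[0,\sigma(T)]$); no enlargement of $\vr$ makes it subordinate. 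The way the paper closes \eqref{g:3.3:2} in \eqref{3.3:15} is not to make this term small but to define $K_2$ \emph{after} and \emph{in terms of} those constants (schematically $K_2\gtrsim\sqrt{K_1K_4}\,(K_1+K_9)$), which is legitimate because $K_1$ is fixed from $\|\D u_0\|_{L^2}$ alone, $K_9$ from $K_1$, and $K_4$ from $K_1,K_9$, so no circularity arises; only the genuinely subordinate remainders are then killed by taking $\vr\ge\rL_1$. This also rules out your single coupled Gronwall inequality for $\Phi_u+\Phi_\theta$ with a source satisfying $\int_0^{\sigma(T)}g\,\dif t\le C\vr^\al$ with $C$ independent of the $K_i$: the velocity estimate must be closed first (there every $K_i$-dependent term is indeed of order $\vr^{\al/2+1}\ll\vr^\al$, so $K_1$ depends only on the data), and the temperature estimate second, consuming the previously fixed constants. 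As written, your plan leaves the bound $A_2(\sigma(T))\le K_2\vr^\al$ unproved.
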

\begin{proof}
First, multiplying $(\ref{FCNS:2})_2$ by $2u_t$ and integrating the resulting equality over $\R^3$, we obtain after integration by parts that
\begin{align}\label{3.3:1}
&\frac{\dif}{\dif t}\int\left(2\mu\rho^\al|\frD(u)|^2+\lambda \rho^\al(\div u)^2 \right)\dif x+\int\rho|u_t|^2\dif x\nonumber\\
\leq~&-2\int\D P\cdot u_t\dif x+\int\rho|u\cdot\D u|^2\dif x-\al\int\rho^{\al-1}\div(\rho u)\left(2\mu|\frD(u)|^2+\lambda (\div u)^2\right)\dif x\nonumber\\
=~&2R\frac{\dif}{\dif t}\int(\rho\theta-\vr)\div u\dif x-2\int P_t\div u\dif x+\int\rho|u\cdot\D u|^2\dif x\nonumber\\
&-\al\int\rho^{\al-1}\div(\rho u)\left(2\mu|\frD(u)|^2+\lambda (\div u)^2\right)\dif x\nonumber\\
=~&2R\frac{\dif}{\dif t}\int(\rho\theta-\vr)\div u\dif x-\frac{2}{2\mu+\lambda}\int P_tG\dif x+\frac{2R}{(2\mu+\lambda)(\al-1)}\int P_t(\rho^{1-\al}\theta-\vr^{1-\al})\dif x\nonumber\\
&+\int\rho|u\cdot\D u|^2\dif x-\al\int\rho^{\al-1}\div(\rho u)\left(2\mu|\frD(u)|^2+\lambda (\div u)^2\right)\dif x\nonumber\\
:=~&2R\frac{\dif}{\dif t}\int(\rho\theta-\vr)\div u\dif x+\sum\limits_{i=1}^4I_i.
\end{align}

Before we give estimates on the right-hand side of (\ref{3.3:1}), first we show some estimates about $\|\D u\|_{L^6}$, $\|G\|_{L^2}$ and $\|\D G\|_{L^2}$, which will be used frequently in the following proof. It follows from \eqref{2.3:2},  \eqref{2.3:4}, and \eqref{assume:3} that
\begin{align}
\|\D u\|_{L^6}\leq~& C\left(\|H\|_{L^2}+\|\rho^{1-\al}\theta-\vr^{1-\al}\|_{L^6}\right)\nonumber\\
\leq~& C\left(\vr^{\frac{1}{2}-\al}\|\sqrt{\rho}\dot{u}\|_{L^2}+\vr^{1-\al}\|\D\theta\|_{L^2}+\vr^{-1}\|\D\rho\|_{L^4}\|\D u\|_{L^4}+\|\rho^{1-\al}\theta-\vr^{1-\al}\|_{L^6}\right)\nonumber\\
\leq~& C\left(\vr^{\frac{1}{2}-\al}\|\sqrt{\rho}\dot{u}\|_{L^2}+\vr^{1-\al}\|\D\theta\|_{L^2}+\vr^{1-\al}\|\theta-1\|_{L^6}+\vr^{-\al}\|\rho-\vr\|_{L^6}\right)\nonumber\\
&+C\vr^{-1}\|\D\rho\|_{L^4}\|\D u\|_{L^2}^{\frac{1}{4}}\|\D u\|_{L^6}^{\frac{3}{4}}\nonumber\\
\leq~& C\left(\vr^{\frac{1}{2}-\al}\|\sqrt{\rho}\dot{u}\|_{L^2}+\vr^{1-\al}\|\D\theta\|_{L^2}+\vr^{-\al}\|\nabla\rho\|_{L^2}+\vr^{-4}\|\D\rho\|_{L^4}^4\|\D u\|_{L^2}\right)+\frac{1}{2}\|\D u\|_{L^6}.
\end{align}
Then, we get
\begin{align}\label{estimate:DuL6}
\|\D u\|_{L^6}\leq~& C\left(\vr^{\frac{1}{2}-\al}\|\sqrt{\rho}\dot{u}\|_{L^2}+\vr^{1-\al}\|\D\theta\|_{L^2}+\vr^{-\al}\|\nabla\rho\|_{L^2}+\vr^{-4}\|\D\rho\|_{L^4}^4\|\D u\|_{L^2}\right).
\end{align}
By \eqref{assume:1} and \eqref{assume:3}, we have
\begin{align}\label{estimate:DuL6:2}
\|\D u\|_{L^6}^2\leq CK_3, \quad \text{for} ~ t\in[0,\sigma(T)],
\end{align}
and
\begin{align}\label{estimate:DuL6:3}
\int_0^{\sigma(T)}\|\D u\|_{L^6}^2\dif t\leq~&C\vr^{1-2\al}\int_0^{\sigma(T)}\|\sqrt{\rho}\dot{u}\|_{L^2}^2\dif t+C\vr^{2-2\al}\int_0^{\sigma(T)}\|\D\theta\|_{L^2}^2\dif t\nonumber\\
&+C\vr^{-2\al}\int_0^{\sigma(T)}\|\D\rho\|_{L^2}^2\dif t+C\vr^{-2}\int_0^{\sigma(T)}\|\D u\|_{L^2}^2\dif t\nonumber\\
\leq~&C(K_1+K_9)\vr^{1-\al},
\end{align}
provided that $\alpha\geq 2$. 

Next, from \eqref{2.3:2}, \eqref{2.3:3}, \eqref{lem3.1:g2} and \eqref{lem3.1:g3}, we get
\begin{align}\label{GL2}
\|G\|_{L^2}\leq~&C\left(\|\D u\|_{L^2}+\|\rho^{1-\al}\theta-\vr^{1-\al}\|_{L^2}\right)\nonumber\\
\leq~&C\left(\|\D u\|_{L^2}+\vr^{1-\al}\|\theta-1\|_{L^2}+\vr^{-\al}\|\rho-\vr\|_{L^2}\right)\nonumber\\
\leq~&C\left(\|\D u\|_{L^2}+\vr^{1-\al}+\vr^{1-\al}\|\D\theta\|_{L^2}\right),
\end{align}
and
\begin{align}\label{DGL2}
\|\D G\|_{L^2}^2\leq~&C\vr^{-2\al}\left(\vr\|\sqrt{\rho}\dot{u}\|_{L^2}^2+\vr^2\|\D\theta\|_{L^2}^2+\vr^{2\al-2}\|\D\rho\|_{L^4}^2\|\D u\|_{L^4}^2\right)\nonumber\\
\leq~&C\vr^{-2\al}\left(\vr\|\sqrt{\rho}\dot{u}\|_{L^2}^2+\vr^2\|\D\theta\|_{L^2}^2+\vr^{2\al-2}\|\D\rho\|_{L^4}^2\|\D u\|_{L^2}^{\frac{1}{2}}\|\D u\|_{L^6}^{\frac{3}{2}}\right).
\end{align}
Hence, we have
\begin{align}\label{DGL2:2}
\|\D G\|_{L^2}^2\leq CK_3, \quad \text{for} ~ t\in[0,\sigma(T)],
\end{align}
and
\begin{align}\label{DGL2:3}
\int_0^{\sigma(T)}\|\D G\|_{L^2}^2\dif t\leq CK_1\vr^{1-\al}.
\end{align}

Now, we estimate the terms on the right hand side of \eqref{3.3:1}. 
\begin{align}\label{3.3:4}
I_1=~&-\frac{2}{2\mu+\lambda}\int P_tG\dif x=\frac{2}{2\mu+\lambda}\int \left(\div(Pu)-R\rho\dot{\theta}\right)G\dif x\nonumber\\
\leq~& C\int P|u||\D G|\dif x+C\int\rho|\dot \theta||G|\dif x\nonumber\\
\leq~& C\vr\|\theta-1\|_{L^6}\|u\|_{L^3}\|\D G\|_{L^2}+C\vr\|u\|_{L^2}\|\D G\|_{L^2}+C\vr^\frac 12\|\sqrt{\rho}\dot\theta\|_{L^2}\| G\|_{L^2}\nonumber\\
\leq~&C\vr\|\D \theta\|_{L^2}\|u\|_{L^2}^\frac{1}{2}\|\nabla u\|_{L^2}^\frac{1}{2}\|\D G\|_{L^2}+C\vr\|u\|_{L^2}\|\D G\|_{L^2}+C\vr^\frac 12\|\sqrt{\rho}\dot\theta\|_{L^2}\| G\|_{L^2}\nonumber\\
\leq~&C(K)\left(\vr^{1+\frac{\alpha}{2}}+\vr^\frac 12\|\sqrt{\rho}\dot\theta\|_{L^2}\right),
\end{align}
where we have used integration by parts, \eqref{2.3:3}, \eqref{assume:1}, \eqref{lem3.1:g1}, \eqref{GL2}, Lemma \ref{Lem3.1}, and
\begin{equation}\label{eqn:P}
P_t=-\div(Pu)+R\rho\dot{\theta}.
\end{equation}
due to \eqref{FCNS:2}.
\begin{align}\label{3.3:7}
I_2=~&\frac{2R}{(2\mu+\lambda)(\al-1)}\int P_t(\rho^{1-\al}\theta-\vr^{1-\al})\dif x\nonumber\\
\leq~&  C\int|\D\rho\cdot u|\left|\rho^{1-\al}\theta-\vr^{1-\al}\right|\dif x+C\int\rho|\D u|\left|\rho^{1-\al}\theta-\vr^{1-\al}\right|\dif x+C\int\rho|\dot\theta|\left|\rho^{1-\al}\theta-\vr^{1-\al}\right|\dif x\nonumber\\
\leq~&  C\|\D\rho\|_{L^4}\|u\|_{L^4}\left\|\rho^{1-\al}\theta-\vr^{1-\al}\right\|_{L^2}+C\vr\|\D u\|_{L^2}\left\|\rho^{1-\al}\theta-\vr^{1-\al}\right\|_{L^2}+C\vr^{\frac{1}{2}}\|\sqrt{\rho}\dot\theta\|_{L^2}\left\|\rho^{1-\al}\theta-\vr^{1-\al}\right\|_{L^2}\nonumber\\
\leq~&  C(K)\left(\vr^\frac{3}{4}\left\|\rho^{1-\al}\theta-\vr^{1-\al}\right\|_{L^2}+\vr\left\|\rho^{1-\al}\theta-\vr^{1-\al}\right\|_{L^2}+\vr^{\frac{1}{2}}\|\sqrt{\rho}\dot\theta\|_{L^2}\left\|\rho^{1-\al}\theta-\vr^{1-\al}\right\|_{L^2}\right)\nonumber\\
\leq~&  C(K)\left(\vr^{2-\frac{\alpha}{2}}+\vr^{\frac{3}{2}-\frac{\alpha}{2}}\|\sqrt{\rho}\dot\theta\|_{L^2}\right),
\end{align}
where we have used \eqref{eqn:P}, \eqref{assume:3} Lemma \ref{Lem3.1} and 
\begin{align}
\left\|\rho^{1-\al}\theta-\vr^{1-\al}\right\|_{L^2} 
\leq~&
\left\|\rho^{1-\al}\left(\theta-1\right)\right\|_{L^2}+\left\|\rho^{1-\al}-\vr^{1-\al}\right\|_{L^2}\nonumber\\
\leq~&\vr^{1-\al}\left\|\theta-1\right\|_{L^2}+C\vr^{-\al}\left\|\rho-\vr\right\|_{L^2}\nonumber\\
\leq~& C\vr^{1-\al}\left(1+\left\|\D\theta\right\|_{L^2}\right).
\end{align}
\begin{align}\label{3.3:5}
I_3=\int\rho|u\cdot\D u|^2\dif x\leq C\vr\|u\|_{L^6}^2\|\D u\|_{L^2}\|\D u\|_{L^6}\leq C\vr\|\D u\|_{L^2}^3\|\D u\|_{L^6}\leq C(K)\vr.
\end{align}
\begin{align}\label{3.3:6}
I_4=~&-\al\int\rho^{\al-1}\div(\rho u)\left(2\mu|\frD(u)|^2+\lambda (\div u)^2\right)\dif x\nonumber\\
\leq~& C\vr^{\al-1}\int|\D u|^2\left(|u||\D\rho|+\rho|\D u|\right)\dif x\nonumber\\
 \leq~&  C(K)\vr^{\al}\|\D u\|_{L^2}\|\D u\|_{L^6} .
\end{align}

Substituting \eqref{3.3:4}--\eqref{3.3:6} into \eqref{3.3:1}, integrating \eqref{3.3:1} over $(0,\sigma(T))$ and using \eqref{Lem3.1:3-1}, we obtain
\begin{align}\label{3.3:8}
&\left(\frac{\vr}{2}\right)^\al\sup\limits_{t\in[0,\sigma(T)]}\|\D u\|_{L^2}^2+\int_0^{\sigma(T)}\int\rho|\dot{u}|^2\dif x\dif t\nonumber\\
\leq~&C\vr^\al\|\D u_0\|_{L^2}^2+2R\int(\rho\theta-\vr)\div u\dif x-2R\int(\rho_0\theta_0-\vr)\div u_0\dif x\nonumber\\
&+C(K)\vr^\frac 12\int_0^{\sigma(T)}\|\sqrt{\rho}\dot\theta\|_{L^2}\dif t+C(K)\vr^\al\int_0^{\sigma(T)}\left(\|\D u\|_{L^2}^2+\|\D u\|_{L^6}^2\right)\dif t+C(K)\left(\vr+\vr^2+\vr^{\frac{\al}{2}+1}\right)\nonumber\\
\leq~&C_1\vr^\al+C(K)\vr^{\frac{\al}{2}+1}\nonumber\\
\leq~&K_1\vr^\al,
\end{align}
provided that $\al\geq 2$, $\vr\geq\rL_{1,1}$, where we have used \eqref{compatibility condition}, \eqref{assume:1}, \eqref{assume:3}, \eqref{estimate:DuL6:2}--\eqref{DGL2:3}, and
\begin{align*}
\sup\limits_{t\in[0,\sigma(T)]}\|\rho\theta-\vr\|_{L^2}\|\D u\|_{L^2}\leq C\sup\limits_{t\in[0,\sigma(T)]}\left(\vr\|\theta-1\|_{L^2}+\|\rho-\vr\|_{L^2}\right)\|\D u\|_{L^2}\leq C(K)\vr^{\frac{\al}{2}+1},
\end{align*}
and taken $K_1$ and $\rL_{1,1}$ large enough, such that $C_1\vr^\al\leq\frac{1}{2}K_1\vr^\al$, $C(K)\vr^{\frac{\al}{2}+1}\leq\frac{1}{2}K_1\vr^\al$, we complete the proof of \eqref{g:3.3:1}.

Next, we will prove \eqref{g:3.3:2}. Multiplying $(\ref{FCNS:2})_3$ by $\theta_t$ and integrating the resulting equality over $\R^3$, we obtain after integration by parts that
\begin{align}\label{3.3:9}
&\frac{\kappa}{2}\frac{\dif}{\dif t}\int|\D\theta|^2\dif x+\int\rho|\dot{\theta}|^2\dif x\nonumber\\
=~&\int\rho\dot{\theta}u\cdot\D\theta\dif x+\lambda\int\rho^\al\dot{\theta}(\div u)^2\dif x+2\mu\int\rho^\al\dot{\theta}|\frD(u)|^2\dif x-R\int\rho\theta\div u\dot{\theta}\dif x\nonumber\\
&+R\int\rho\theta u\cdot\D\theta\div u\dif x-\lambda\int\rho^\al u\cdot\D\theta(\div u)^2\dif x-2\mu\int\rho^\al u\cdot\D\theta|\frD(u)|^2\dif x\nonumber\\
:=~&\sum\limits_{i=1}^7J_i,
\end{align}
where $J_i$ can be estimated as 
\begin{align}\label{3.3:11}
J_1+J_4\leq~&C\vr^{\frac{1}{2}}\int|\sqrt{\rho}\dot{\theta}|\left(|u\cdot\D\theta|+|\theta\D u|\right)\dif x\nonumber\\
\leq~&\frac{1}{4}\|\sqrt{\rho}\dot{\theta}\|_{L^2}^2+C\vr\left(\|u\D\theta\|_{L^2}^2+\|\theta\D u\|_{L^2}^2\right)\nonumber\\
\leq~&\frac{1}{4}\|\sqrt{\rho}\dot{\theta}\|_{L^2}^2+C\vr\left(\|\D u\|_{L^2}^2+\|\D u\|_{L^2}\|\D\theta\|_{L^2}^2\|\D u\|_{L^6}\right)\nonumber\\
\leq~&\frac{1}{4}\|\sqrt{\rho}\dot{\theta}\|_{L^2}^2+C(K)\vr\left(1+\|\D\theta\|_{L^2}^2\right),
\end{align}
in the last inequality of (\ref{3.3:11}) we have used
\begin{align}\label{3.3:10}
\|u\D\theta\|_{L^2}^2+\|\theta\D u\|_{L^2}^2\leq~&C\left(\|u\|_{L^\infty}^2\|\D\theta\|_{L^2}^2+\|\D u\|_{L^2}^2+\|\D u\|_{L^2}\|\theta-1\|_{L^6}^2\|\D u\|_{L^6}\right)\nonumber\\
\leq~&C\left(\|\D u\|_{L^2}^2+\|\D u\|_{L^2}\|\D\theta\|_{L^2}^2\|\D u\|_{L^6}\right).
\end{align}
\begin{align}\label{3.3:13}
J_2+J_3\leq C\vr^{\al-\frac{1}{2}}\int|\sqrt{\rho}\dot{\theta}||\D u|^2\dif x\leq \frac{1}{4}\|\sqrt{\rho}\dot{\theta}\|_{L^2}^2+C\vr^{2\al-1}\|\D u\|_{L^2}\|\D u\|_{L^6}^3.
\end{align}
\begin{align}\label{3.3:12}
J_5\leq C\vr\left(\|u\D\theta\|_{L^2}^2+\|\theta\D u\|_{L^2}^2\right)\leq C\vr\left(\|\D u\|_{L^2}^2+\|\D u\|_{L^2}\|\D\theta\|_{L^2}^2\|\D u\|_{L^6}\right)\leq C(K)\vr\left(1+\|\D\theta\|_{L^2}^2\right).
\end{align}
\begin{align}\label{3.3:14}
J_6+J_7\leq~&C\vr^\al\int|u\cdot\D\theta||\D u|^2\dif x\nonumber\\
\leq~&C\vr\|u\D\theta\|_{L^2}^2+C\vr^{2\al-1}\|\D u\|_{L^2}\|\D u\|_{L^6}^3\nonumber\\
\leq~&C\vr\left(\|\D u\|_{L^2}^2+\|\D u\|_{L^2}\|\D\theta\|_{L^2}^2\|\D u\|_{L^6}\right)+C\vr^{2\al-1}\|\D u\|_{L^2}\|\D u\|_{L^6}^3\nonumber\\
\leq~&C\vr\left(1+\|\D\theta\|_{L^2}^2\right)+C\vr^{2\al-1}\|\D u\|_{L^2}\|\D u\|_{L^6}^3.
\end{align}

Substituting \eqref{3.3:11}--\eqref{3.3:14} into \eqref{3.3:9} and integrating \eqref{3.3:9} over $(0,\sigma(T))$, we obtain
\begin{align}\label{3.3:15}
&\kappa\sup\limits_{t\in[0,\sigma(T)]}\|\D\theta\|_{L^2}^2+\int_0^{\sigma(T)}\int\rho|\dot{\theta}|^2\dif x\dif t\nonumber\\
\leq~&\kappa\|\D\theta_0\|_{L^2}^2+C(K)\vr\int_0^{\sigma(T)}\left(1+\|\D\theta\|_{L^2}^2\right)\dif t+C\vr^{2\al-1}\int_0^{\sigma(T)}\|\D u\|_{L^2}\|\D u\|_{L^6}^3\dif t\nonumber\\
\leq~&\kappa\|\D\theta_0\|_{L^2}^2+C(K)\vr\int_0^{\sigma(T)}\left(1+\|\D\theta\|_{L^2}^2\right)\dif t+C\sqrt{K_1K_4}\,\vr^{2\al-1}\int_0^{\sigma(T)}\|\D u\|_{L^6}^2\dif t\nonumber\\
\leq~&C(K)\vr^2+C_2\sqrt{K_1K_4}\,(K_1+K_9)\vr^\al\nonumber\\
\leq~&K_2\vr^\al,
\end{align}
provided $\al\geq 2$, $\vr\geq\rL_{1,2}$, where we have used \eqref{assume:1}, \eqref{estimate:DuL6:2}, \eqref{estimate:DuL6:3}, and taken $K_2$ and $\rL_{1,2}$ large enough, such that $C(K)\vr^2\leq\frac{1}{2}K_2\vr^\al$ and $C_2\sqrt{K_1K_4}\,(K_1+K_9)\vr^\al\leq\frac{1}{2}K_2\vr^\al$. At last, taking $\rL_1=\max\{\rL_{1,1}, ~ \rL_{1,2}\}$, we have proved Lemma \ref{Lem33}.
\end{proof}

The second-order estimates of velocity are given in the following Lemma \ref{Lem34}. We notice that when viscosities depend on density, compared to the constant viscosities case as in \cite{MR3744381}, the estimates become much more complicated.
\begin{lemma}\label{Lem34}
Under the conditions of Proposition \ref{p4.1}, there exist positive constants $K_3$, $K_4$ and $\rL_2$ depending on $\mu$, $\lambda$, $\kappa$, $R$, $C_0$, $u_0$, and $\theta_0$, such that if $(\rho,u,\theta)$ is a strong solution of \eqref{FCNS:2}--\eqref{data and far} on $\R^3\times(0,T]$, the following estimate holds
\begin{equation}\label{g:3.4:1}
A_3(\sigma(T))=\sup\limits_{t\in[0,\sigma(T)]}\int\rho|\dot{u}|^2\dif x+\left(\frac{\vr}{2}\right)^\al\int_0^{\sigma(T)}\int|\D \dot{u}|^2\dif x\dif t\leq K_3\vr^{2\al-1},
\end{equation}
provided that $\vr\geq \rL_2$.
\end{lemma}
\begin{proof}
Operating $\dot{u}^j(\pt+\mathrm{div}(u\cdot))$ to $(\ref{FCNS:2})_2^j$, summing with respect to $j$, and integrating the resulting equation over $\R^3$, we obtain
\begin{align}\label{3.4.1}
\frac{1}{2}\frac{\dif}{\dif t}\int\rho|\dot u|^2\dif x
=~& -\int\dot u^j\Big(\partial_jP_t+\div(u\partial_j P)\Big)\dif x+2\mu\int\dot u^j\Big[\pt(\div(\rho^\al\frD(u)))+\div(u\div(\rho^\al\frD(u)))\Big]\dif x\nonumber\\
&+\lambda\int\dot u^j\Big[\pt(\partial_j(\rho^\al\div u))+\div(u\cdot\partial_j(\rho^\al\div u))\Big]\dif x\nonumber\\
:=~&\sum_{i=1}^3M_i. 
\end{align} 

From \eqref{eqn:P}, after integration by parts,  we can see that
\begin{align}\label{3.4.3}
M_1=~&-\int\dot u^j\left(\partial_jP_t+\div (u\partial_j P)\right)\dif x\nonumber\\
=~&R\int\rho\dot{\theta}\div\dot{u}\dif x+\int P\left(\partial_k\left(u^k\div\dot{u}\right)-\div u\div\dot{u}-\partial_j\left(\partial_k\dot{u}^ju^k\right)\right)\dif x\nonumber\\
=~&R\int\rho\dot{\theta}\div\dot{u}\dif x-R\int \rho\theta\partial_k\dot{u}^j\partial_j u^k\dif x\nonumber\\
\leq~&\frac{\mu}{8}\|\D\dot u\|_{L^2}^2+C\left(\|\rho\dot\theta\|_{L^2}^2+\|\rho\theta\D u\|_{L^2}^2\right)\nonumber\\
\leq~&\frac{\mu}{8}\|\D\dot u\|_{L^2}^2+C\left(\vr\|\sqrt{\rho}\dot\theta\|_{L^2}^2+\vr^2\|\D\theta\|_{L^2}^2\|\D u\|_{L^2}\|\D u\|_{L^6}\right).
\end{align}

Next, we have
\begin{align}\label{3.4.4}
M_2=~&\mu\int\dot{u}^j\left(\pt\partial_i\left(\rho^\al(\partial_iu^j+\partial_ju^i)\right)+\partial_k\left(u^k\partial_i\left(\rho^\al(\partial_iu^j+\partial_ju^i)\right)\right)\right)\dif x\nonumber\\
=~&-\mu\int\partial_i\dot{u}^j\pt\left(\rho^\al(\partial_iu^j+\partial_ju^i)\right)\dif x+\mu\int\dot{u}^j\div u\partial_i\left(\rho^\al(\partial_iu^j+\partial_ju^i)\right)\dif x\nonumber\\
&+\mu\int\dot{u}^ju^k\partial_i\partial_k\left(\rho^\al(\partial_iu^j+\partial_ju^i)\right)\dif x\nonumber\\
=~&-\mu\int\partial_i\dot{u}^j\pt\left(\rho^\al(\partial_iu^j+\partial_ju^i)\right)\dif x-\mu\int\partial_i\dot{u}^j\div u\left(\rho^\al(\partial_iu^j+\partial_ju^i)\right)\dif x\nonumber\\
&-\mu\int\rho^\al\dot{u}^j\partial_i(\div u)(\partial_iu^j+\partial_ju^i)\dif x-\mu\int\partial_i\dot{u}^ju^k\partial_k\left(\rho^\al(\partial_iu^j+\partial_ju^i)\right)\dif x\nonumber\\
&-\mu\int\dot{u}^j\partial_i u^k\partial_k\left(\rho^\al(\partial_iu^j+\partial_ju^i)\right)\dif x\nonumber\\
=~&-\mu\int\partial_i\dot{u}^j\pt\left(\rho^\al(\partial_iu^j+\partial_ju^i)\right)\dif x-\mu\int\partial_i\dot{u}^j\div u\left(\rho^\al(\partial_iu^j+\partial_ju^i)\right)\dif x\nonumber\\
&-\mu\int\rho^\al\dot{u}^j\partial_i(\div u)(\partial_iu^j+\partial_ju^i)\dif x-\mu\int\partial_i\dot{u}^ju^k\partial_k\left(\rho^\al(\partial_iu^j+\partial_ju^i)\right)\dif x\nonumber\\
&+\mu\int\partial_k\dot{u}^j\partial_i u^k\left(\rho^\al(\partial_iu^j+\partial_ju^i)\right)\dif x+\mu\int\dot{u}^j\partial_k\partial_i u^k\left(\rho^\al(\partial_iu^j+\partial_ju^i)\right)\dif x\nonumber\\
=~&-\mu\int\partial_i\dot{u}^j\pt\left(\rho^\al(\partial_iu^j+\partial_ju^i)\right)\dif x-\mu\int\partial_i\dot{u}^ju^k\partial_k\left(\rho^\al(\partial_iu^j+\partial_ju^i)\right)\dif x\nonumber\\
&+\mu\int\rho^\al\partial_k\dot{u}^j\partial_iu^k(\partial_iu^j+\partial_ju^i)\dif x-\mu\int\rho^\al\partial_i\dot{u}^j\div u(\partial_i u^j+\partial_j u^i)\dif x\nonumber\\
\leq~& -2\mu\int\rho^\al|\frD(\dot{u})|^2\dif x+C\vr^\al\int|\D\dot{u}||\D u|^2\dif x\nonumber\\
\leq~&-\frac{3\mu}{4}\left(\frac{\vr}{2}\right)^\al\|\D\dot u\|_{L^2}^2+C\vr^\al\|\D u\|_{L^2}\|\D u\|_{L^6}^3,
\end{align}
where we have used the fact
\begin{align*}
&-\mu\int\partial_i\dot{u}^j\left(\pt\left(\rho^\al(\partial_iu^j+\partial_ju^i)\right)+u\cdot\D\left(\rho^\al(\partial_iu^j+\partial_ju^i)\right)\right)\dif x\\
=~&-\mu\int\partial_i\dot{u}^j\left((\partial_iu^j+\partial_ju^i)(\pt(\rho^\al)+u\cdot\D(\rho^\al))+\rho^\al\left((\partial_iu^j_t+\partial_ju^i_t)+u\cdot\D(\partial_iu^j+\partial_ju^i)\right)\right)\dif x\\
=~&-\mu\int\partial_i\dot{u}^j\left(\al\rho^{\al-1}(\partial_iu^j+\partial_ju^i)(\rho_t+u\cdot\D\rho)+\rho^\al\left((\partial_i\dot{u}^j+\partial_j\dot{u}^i)+u\cdot\D(\partial_iu^j+\partial_ju^i)\right)\right)\dif x\\
&-\mu\int\rho^\al\partial_i\dot{u}^j\left(\partial_i(u\cdot\D)u^j+\partial_j(u\cdot\D)u^i\right)\dif x\\
=~&\mu\al\int\rho^\al\partial_i\dot{u}^j\div u(\partial_i u^j+\partial_j u^i)\dif x-\mu\int\rho^\al\partial_i\dot{u}^j\left(\partial_i\dot{u}^j+\partial_j\dot{u}^i\right)\dif x\\
&+\mu\int\rho^\al\partial_i\dot{u}^j\left(\partial_iu^k\partial_ku^j+\partial_ju^k\partial_ku^i\right)\dif x.
\end{align*}

Similarly, we get
\begin{align}\label{3.4.5}
M_3
=~&\lambda\int\dot u^j\left(\pt\partial_j(\rho^\al\div u)+\div (u\partial_j(\rho^\al\div u))\right)\dif x\nonumber\\
=~&-\lambda\int\partial_j\dot u^j\pt(\rho^\al\div u)\dif x+\lambda\int\dot u^j\left(\div u\partial_j(\rho^\al\div u)+u\cdot\D\partial_j(\rho^\al\div u)\right)\dif x\nonumber\\
=~&-\lambda\int\div\dot u\pt(\rho^\al\div u)\dif x-\lambda\int\rho^\al\div\dot{u}(\div u)^2\dif x-\lambda\int\rho^\al\div u\dot{u}^j\partial_j(\div u)\dif x\nonumber\\
&-\lambda\int\div\dot{u}u\cdot\nabla(\rho^\al\div u)\dif x-\lambda\int\dot{u}^j\partial_ju^k\partial_k(\rho^\al\div u)\dif x\nonumber\\
=~&-\lambda\int\div\dot u\pt(\rho^\al\div u)\dif x-\lambda\int\div\dot{u}u\cdot\nabla(\rho^\al\div u)\dif x-\lambda\int\rho^\al\div\dot{u}(\div u)^2\dif x\nonumber\\
&-\lambda\int\rho^\al\div u\dot{u}^j\partial_j(\div u)\dif x+\lambda\int\rho^\al\partial_k\dot{u}^j\partial_ju^k\div u\dif x+\lambda\int\rho^\al\div u\dot{u}^j\partial_j(\div u)\dif x\nonumber\\
=~&-\lambda\int\rho^\al(\div\dot{u})^2 \dif x+\lambda\int\rho^\alpha\div\dot{u}\partial_iu^j\partial_ju^i\dif x\nonumber\\
&+\lambda(\al-1)\int\rho^\al\div\dot{u}(\div u)^2\dif x+\lambda\int\rho^\al\partial_k\dot{u}^j\partial_ju^k\div u\dif x\nonumber\\
\leq~&-\lambda\left(\frac{\vr}{2}\right)^\al\|\div\dot{u}\|_{L^2}^2+\frac{\mu}{4}\left(\frac{\vr}{2}\right)^\al\|\D\dot{u}\|_{L^2}^2+C\vr^\al\|\D u\|_{L^2}\|\D u\|_{L^6}^3,
\end{align}
where we have used the fact
\begin{align*}
&-\lambda\int\div\dot{u}\left(\pt(\rho^\al\div u)+u\cdot\D(\rho^\al\div u)\right)\dif x\\
=~&-\lambda\int\div\dot{u}\left(\div u(\pt(\rho^\al)+u\cdot\D(\rho^\al))+\rho^\al(\div u_t+u\cdot\D(\div u))\right)\dif x\\
=~&-\lambda\int\div\dot{u}\left(\al\rho^{\al-1}\div u(\rho_t+u\cdot\D\rho)+\rho^\al(\div\dot{u}+u\cdot\D(\div u)-\div((u\cdot\D)u))\right)\dif x\\
=~&\lambda\al\int\rho^\al\div\dot{u}(\div u)^2\dif x-\lambda\int\rho^\al(\div\dot{u})^2\dif x+\lambda\int\rho^\al\div\dot{u}\partial_iu^j\partial_ju^i\dif x.
\end{align*}

Combining \eqref{3.4.1} and \eqref{3.4.3}--\eqref{3.4.5}, we get
\begin{align}\label{3.4.6}
\frac{\dif}{\dif t}\int\rho|\dot u|^2\dif x+\left(\frac{\vr}{2}\right)^\al\|\D\dot u\|_{L^2}^2\leq ~&C\left(\vr\|\sqrt{\rho}\dot\theta\|_{L^2}^2+\vr^2\|\D\theta\|_{L^2}^2\|\D u\|_{L^2}\|\D u\|_{L^6}+\vr^\al\|\D u\|_{L^2}\|\D u\|_{L^6}^3\right)\nonumber\\
\leq~&C(K)\left(\vr\|\sqrt{\rho}\dot\theta\|_{L^2}^2+\vr^{\al+2}+\vr^\al\right)
\end{align}
Integrating \eqref{3.4.6} over $(0,\sigma(T))$ and using \eqref{assume:1}, \eqref{estimate:DuL6:2}, and \eqref{estimate:DuL6:3}, we obtain
\begin{align}\label{3.4.7}
\int\rho|\dot u|^2\dif x+\left(\frac{\vr}{2}\right)^\al\int_0^{\sigma(T)}\|\D\dot u\|_{L^2}^2\dif t
\leq~&\int\rho|\dot u|^2\dif x\bigg|_{t=0}+C(K)\vr\int_0^{\sigma(T)}\|\sqrt{\rho}\dot\theta\|_{L^2}^2\dif t+C(K)\vr^{\al+2}\nonumber\\
\leq~&C_3\vr^{2\al-1}+C(K)\vr^{\al+2},
\end{align}
where we have used
\begin{align}\label{3.4.8}
\int\rho|\dot u|^2\dif x\bigg|_{t=0}\leq~& C\vr^{-1}\left(\left\|\D(\rho_0\theta_0)\right\|_{L^2}^2+\left\|\D(\rho^\al_0\D u_0)\right\|_{L^2}^2\right)\nonumber\\
\leq~&C\vr^{-1}\left(\left\|\theta_0\D\rho_0\right\|_{L^2}^2+\vr^2\left\|\D\theta_0\right\|_{L^2}^2+\vr^{2\al}\left\|\D^2u_0\right\|_{L^2}^2+\vr^{2\al-2}\left\|\D\rho_0\D u_0\right\|_{L^2}^2\right)\nonumber\\
\leq~&C_3\vr^{2\al-1}.
\end{align}

Inserting \eqref{3.4.8} into \eqref{3.4.7}, we get
\begin{align}\label{3.4.9}
\int\rho|\dot u|^2\dif x+\left(\frac{\vr}{2}\right)^\al\int_0^{\sigma(T)}\|\D\dot u\|_{L^2}^2\dif t\leq C_3\vr^{2\al-1}+C(K)\vr^{\al+2}\leq K_3\vr^{2\al-1},
\end{align}
provided $\al\geq 3$, $\vr\geq\rL_2$, where we have taken $K_3$ and $\rL_2$ large enough, such that $C_3\vr^{2\al-1}\leq\frac{1}{2}K_3\vr^{2\al-1}$,  and $C(K)\vr^{\al+2}\leq\frac{1}{2}K_3\vr^{2\al-1}$.
\end{proof}

The basic energy estimate in Lemma \ref{Lem3.1} cannot yield directly the bounds on the $L^2$-norm, with respect to time and space, of the first-order derivatives of both the velocity and the temperature since we do not have the super norm of the temperature on $[0,\sigma(T)]$.  In fact they could be obtained in the following higher-order estimates, however 
 the bounds are too large to close the a priori estimates due to the large initial data of the density. Under this reason, we give following Lemma \ref{Lem35} which plays a fundamental role throughout this paper.
\begin{lemma}\label{Lem35}
Under the conditions of Proposition \ref{p4.1}, there exist positive constants $K_4$ and $\rL_3$ depending on $\mu$, $\lambda$, $\kappa$, and $R$ such that if $(\rho,u,\theta)$ is a strong solution of \eqref{FCNS:2}--\eqref{data and far} on $\R^3\times(0,T]$, it holds that
\begin{align}\label{g:3.5}
A_4(\sigma(T))=\sup\limits_{t\in[0,\sigma(T)]}\int\rho(\theta-1)^2\dif x+\int_0^{\sigma(T)}\left(\left(\frac{\vr}{2}\right)^\al\|\D u\|_{L^2}^2+\kappa\|\D\theta\|_{L^2}^2 \right)\dif t\leq K_4\vr,
\end{align}
provided that $\vr\geq \rL_3$. 
\end{lemma}
\begin{proof}
First, multiplying $(\ref{FCNS:2})_2$ by $u$ and integrating the resulting equality over $\mathbb{R}^3$, together with \eqref{FCNS:2}$_1$, we have
\begin{align}\label{3.5.1}
\frac{\dif}{\dif t}\int\left(\frac{1}{2}\rho|u|^2+R(\rho\ln\rho-\rho\ln\vr-\rho+\vr)\right)\dif x+\left(\frac{\vr}{2}\right)^\al\int|\D{u}|^2\dif x\leq C\vr\|\theta-1\|_{L^2}\|\D{u}\|_{L^2}\leq CK_4^{\frac{1}{2}}\vr\|\D{u}\|_{L^2}.
\end{align}

Next, multiplying $(\ref{FCNS:2})_3$ by $\theta-1$ and integrating the resulting equality over
$\mathbb{R}^3$ lead to
\begin{align}\label{3.5.2}
&\frac{1}{2}\frac{\dif}{\dif t}\int\rho\left(\theta-1\right)^2\dif x+\kappa\int|\D\theta|^2\dif x\nonumber\\
\leq~&C\vr\int\theta|\theta-1|\div u\dif x+C\vr^\al\int|\theta-1||\D u|^2\dif x\nonumber\\
\leq~&C\vr\int|\theta-1||\D u|\dif x+C\vr\int|\theta-1|^2|\D u|\dif x+C\vr^\al\int|\theta-1||\D u|^2\dif x\nonumber\\
\leq~&C\vr\|\theta-1\|_{L^2}\|\D u\|_{L^2}+C\vr\|\theta-1\|_{L^2}\|\theta-1\|_{L^3}\|\D u\|_{L^6}+C\vr^\al\|\theta-1\|_{L^2}\|\D u\|_{L^4}^2\nonumber\\
\leq~&C\vr\|\theta-1\|_{L^2}\|\D u\|_{L^2}+C\vr\|\theta-1\|_{L^2}^{\frac{3}{2}}\|\D\theta\|_{L^2}^{\frac{1}{2}}\|\D u\|_{L^6}+C\vr^\al\|\theta-1\|_{L^2}\|\D u\|_{L^2}^{\frac{1}{2}}\|\D u\|_{L^6}^{\frac{3}{2}}\nonumber\\
\leq~&CK_4^{\frac{1}{2}}\vr\|\D u\|_{L^2}+CK_4^{\frac{3}{4}}\vr\|\D\theta\|_{L^2}^{\frac{1}{2}}\|\D u\|_{L^6}+CK_4^{\frac{1}{2}}\vr^\al\|\D u\|_{L^2}^{\frac{1}{2}}\|\D u\|_{L^6}^{\frac{3}{2}}.
\end{align}
Integrating \eqref{3.5.1} and \eqref{3.5.2} over $(0,\sigma(T))$, and adding the resulting inequality together,  we have
\begin{align}\label{3.5.3}
&\int\left(\rho|{u}|^2+R(\rho\ln\rho-\rho\ln\vr-\rho+\vr)+\rho\left(\theta-1\right)^2\right)\dif x+\int_0^{\sigma(T)}\left(\left(\frac{\vr}{2}\right)^\al\|\D u\|_{L^2}^2+\kappa\|\D\theta\|_{L^2}^2\right)\dif t\nonumber\\
\leq~&\int\left(\rho_0|{u}_0|^2+R(\rho_0\ln\rho_0-\rho_0\ln\vr-\rho_0+\vr)+\rho_0\left(\theta_0-1\right)^2\right)\dif x\nonumber\\
&+CK_4^{\frac{1}{2}}\vr\left(\int_0^{\sigma(T)}\|\D u\|_{L^2}^2\dif t\right)^{\frac{1}{2}}+CK_4^{\frac{3}{4}}\vr\left(\int_0^{\sigma(T)}\|\D\theta\|_{L^2}^2\dif t\right)^{\frac{1}{4}}\left(\int_0^{\sigma(T)}\|\D u\|_{L^6}^2\dif t\right)^{\frac{1}{2}}\nonumber\\
&+CK_4^{\frac{1}{2}}\vr^\al\left(\int_0^{\sigma(T)}\|\D u\|_{L^2}^2\dif t\right)^{\frac{1}{4}}\left(\int_0^{\sigma(T)}\|\D u\|_{L^6}^2\dif t\right)^{\frac{3}{4}}\nonumber\\
\leq~&C_4\vr+CK_1^{\frac{1}{2}}K_4^{\frac{1}{2}}\vr^{\frac{3}{2}-\frac{\al}{2}}+CK_4(K_1+K_9)^{\frac{1}{2}}\vr^{\frac{7}{4}-\frac{\al}{2}}+C_4\left(K_4(K_1+K_9)\right)^{\frac{3}{4}}\vr\nonumber\\
\leq~&C_4\left(1+\left(K_4(K_1+K_9)\right)^{\frac{3}{4}}\right)\vr+C(K)\vr^{\frac{7}{4}-\frac{\al}{2}}\nonumber\\
\leq~&K_4\vr,
\end{align}
provided $\vr\geq\rL_3$, where we have taken $K_4$ and $\rL_3$ large enough such that $C_4\left(1+\left(K_4(K_1+K_9)\right)^{\frac{3}{4}}\right)\leq\frac{1}{2}K_4$ and $C(K)\vr^{\frac{7}{4}-\frac{\al}{2}}\leq\frac{1}{2}K_4\vr$.
\end{proof}

\subsubsection{Time-weighted uniform estimates of velocity and temperature on $[0,T]$}\label{sec3.2}

Next, in Lemma \ref{sky}--\ref{Lem 3.5}, we derive the first-order and the second-order time-weighted uniform estimates of $(u,\theta)$ on the time interval $[0,T]$. First, based on the Lemma \ref{Lem3.1} and \ref{Lem35}, we have Lemma \ref{sky} which plays an important role in the following estimates on $A_i(T) ~ (i=5,6,7,8)$.

\begin{lemma}\label{sky}
Under the conditions of Proposition \ref{p4.1}, there exist positive constants $K_5$ and $\rL_4$ depending on $\mu$, $\lambda$, $\kappa$, and $R$ such that if $(\rho,u,\theta)$ is a strong solution of \eqref{FCNS:2}--\eqref{data and far} on $\R^3\times(0,T]$, it holds that
\begin{align}\label{-4.1:2}
\int_0^T\|\D u\|_{L^2}^2\dif t\leq K_5\vr^{2-\al}, \quad \int_0^T\|\D\theta\|_{L^2}^2\dif t\leq K_5\vr^3,
\end{align}
provided that $\vr\geq\rL_4$.
\end{lemma}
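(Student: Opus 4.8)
The plan is to split the time interval as $[0,T]=[0,\sigma(T)]\cup[\sigma(T),T]$ and argue on each piece separately. On $[0,\sigma(T)]$ both bounds will be immediate consequences of the estimate $A_4(\sigma(T))\le K_4\vr$ already established in \eqref{g:3.5}; on $[\sigma(T),T]$ I will squeeze them out of the basic energy dissipation of Lemma~\ref{Lem3.1}, using the pointwise ceiling $\theta\le C(K)\vr$ supplied by \eqref{assume:1.75}.

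First, on $[0,\sigma(T)]$: since $\sigma(T)\le1$ and $\vr>2$, \eqref{g:3.5} gives directly
\[
\int_0^{\sigma(T)}\|\D u\|_{L^2}^2\dif t\le C\Big(\tfrac{\vr}{2}\Big)^{-\al}K_4\vr\le C(K)\vr^{2-\al},\qquad \int_0^{\sigma(T)}\|\D\theta\|_{L^2}^2\dif t\le C\kappa^{-1}K_4\vr\le C(K)\vr^{3}.
\]
For the remaining interval I recall from the proof of Lemma~\ref{Lem3.1} the dissipation bound contained in \eqref{Lem3.1:3-1},
\[
\int_0^T\int\Big(\tfrac{1}{\theta}\big(2\mu\rho^\al|\frD(u)|^2+\lambda\rho^\al(\div u)^2\big)+\tfrac{\kappa}{\theta^2}|\D\theta|^2\Big)\dif x\dif t\le C\vr,
\]
and note that on $[\sigma(T),T]$ one has $\sigma(t)\equiv1$ and, by \eqref{assume:1.75} and \eqref{assume:3}, $0<\theta\le1+\|\theta-1\|_{L^\infty}\le C(K)\vr$ and $\rho\ge\vr/2$ pointwise; hence $\kappa\theta^{-2}\ge c(K)\vr^{-2}$ and $\rho^\al\theta^{-1}\ge c(K)\vr^{\al-1}$ a.e.\ on this interval. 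The temperature estimate then drops out at once: $\int_{\sigma(T)}^T\|\D\theta\|_{L^2}^2\dif t\le C(K)\vr^{2}\int_{\sigma(T)}^T\int\kappa\theta^{-2}|\D\theta|^2\dif x\dif t\le C(K)\vr^{3}$.

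For the velocity the key algebraic point is that $2\mu|\frD(u)|^2+\lambda(\div u)^2\ge0$ pointwise: this is clear if $\lambda\ge0$, and if $\lambda<0$ it follows from $(\div u)^2\le3|\frD(u)|^2$ together with $2\mu+3\lambda\ge0$. Because this density is nonnegative pointwise, the lower bound on $\rho^\al\theta^{-1}$ can be factored out, and an integration by parts — namely $\int\big(2\mu|\frD(u)|^2+\lambda(\div u)^2\big)\dif x=\mu\|\D u\|_{L^2}^2+(\mu+\lambda)\|\div u\|_{L^2}^2\ge\mu\|\D u\|_{L^2}^2$, using $\mu+\lambda\ge\mu/3>0$ — yields $\int\rho^\al\theta^{-1}\big(2\mu|\frD(u)|^2+\lambda(\div u)^2\big)\dif x\ge c(K)\vr^{\al-1}\|\D u\|_{L^2}^2$ for $t\in[\sigma(T),T]$. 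Integrating in time gives $\int_{\sigma(T)}^T\|\D u\|_{L^2}^2\dif t\le C(K)\vr^{-(\al-1)}\cdot\vr=C(K)\vr^{2-\al}$. Combining the two subintervals and choosing $K_5$ suitably large, together with $\rL_4$ large enough that $\vr>2$ and the pointwise bound $\theta\le C(K)\vr$ used above are in force, completes the proof.

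I expect the genuinely delicate point to be exactly this velocity estimate on $[\sigma(T),T]$. The obvious alternative — testing $(\ref{FCNS:2})_2$ with $u$ and using the kinetic-energy balance \eqref{3.5.1} — produces the term $C\vr\int_0^T\|\theta-1\|_{L^2}\|\D u\|_{L^2}\dif t$, and since $\|\theta-1\|_{L^2}$ carries no time decay, that route would only yield a bound growing with $T$. Working instead with the weighted dissipation $\int_0^T\int\rho^\al\theta^{-1}\big(2\mu|\frD(u)|^2+\lambda(\div u)^2\big)\dif x\dif t$, which \eqref{Lem3.1:3-1} already bounds by $C\vr$ uniformly in $T$, and exploiting the pointwise ceiling $\theta\le C(K)\vr$ valid on $[\sigma(T),T]$, is precisely what makes the final estimate $T$-independent.
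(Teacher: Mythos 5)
Your proposal is correct and follows essentially the same route as the paper: the bounds on $[0,\sigma(T)]$ come from \eqref{g:3.5}, and on $[\sigma(T),T]$ they are extracted from the entropy dissipation \eqref{Lem3.1:3-1} combined with the ceiling $\theta\leq C(K)\vr$ from \eqref{assume:1.75}. You merely make explicit two steps the paper leaves implicit — the pointwise nonnegativity of $2\mu|\frD(u)|^2+\lambda(\div u)^2$ needed to factor out the lower bound of $\rho^\al\theta^{-1}$, and the identity $\int\big(2\mu|\frD(u)|^2+\lambda(\div u)^2\big)\dif x=\mu\|\D u\|_{L^2}^2+(\mu+\lambda)\|\div u\|_{L^2}^2\geq\mu\|\D u\|_{L^2}^2$ — both of which are valid.
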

\begin{proof}
Inserting \eqref{assume:1.75} into \eqref{Lem3.1:3-2}, and taking $K_5\geq C\left(K_8+K_8^{\frac{1}{2}}\right)$, we have
\begin{align}\label{-4.1:3}
\int_{\sigma(T)}^T\|\D u\|_{L^2}^2\dif t\leq K_5\vr^{2-\al}, \quad \int_{\sigma(T)}^T\|\D\theta\|_{L^2}^2\dif t\leq K_5\vr^3.
\end{align}
Then \eqref{-4.1:2} is followed by \eqref{-4.1:3} and \eqref{g:3.5}, provided $\rL_4\geq\max\{1, K_4^2\}$.
\end{proof}

In Lemma \ref{Lem:4}, we estimate $A_{5,i}(T) ~ (i=1,2,3)$ (see (\ref{B1,1})--(\ref{B1,3})), separately, by which,  after a linear combination of them, we can derive the uniform estimate of $A_5(T)$.

\begin{lemma}\label{Lem:4}
Under the conditions of Proposition \ref{p4.1}, there exist positive constants $M$, $K_6$ and $\rL_5$ depending on $\mu$, $\lambda$, $\kappa$, and $R$ such that if $(\rho,u,\theta)$ is a strong solution of \eqref{FCNS:2}--\eqref{data and far} on $\R^3\times(0,T]$, it holds that
\begin{align}\label{Lem:4:1}
A_5(T)=~&2M\sup\limits_{t\in[0,T]}\sigma^2\left(\frac{\vr}{2}\right)^{\al+1}\|\D u\|_{L^2}^2+M\vr\int_0^T\int \sigma^2\rho|\dot{u}|^2\dif x\dif t\nonumber\\
&+\kappa M^2\sup\limits_{t\in[0,T]}\sigma^3\vr\|\D \theta\|_{L^2}^2+M^2\vr\int_0^T\int \sigma^3\rho|\dot{\theta}|^2\dif x\dif t\nonumber\\
&+\sup\limits_{t\in[0,T]}\sigma^3\int\rho|\dot{u}|^2\dif x+\left(\frac{\vr}{2}\right)^\al\int_0^T\int\sigma^3|\D \dot{u}|^2\dif x\dif t\leq K_6\vr^2,
\end{align}
provided that $\vr\geq\rL_5$.
\end{lemma}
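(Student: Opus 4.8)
The plan is to derive time-weighted energy estimates that combine the three building blocks already established on $[0,\sigma(T)]$ --- namely the bounds behind \eqref{g:3.3:1}, \eqref{g:3.3:2}, \eqref{g:3.4:1} --- and propagate them to the whole interval $[0,T]$ with the weights $\sigma^2$, $\sigma^3$. Concretely, I would revisit the three differential identities used earlier: (i) the identity \eqref{3.3:1} for $\frac{\dif}{\dif t}\int(2\mu\rho^\al|\frD(u)|^2+\lambda\rho^\al(\div u)^2)\dif x$, multiplied now by $\sigma^2$; (ii) the identity \eqref{3.3:9} for $\frac{\kappa}{2}\frac{\dif}{\dif t}\int|\D\theta|^2\dif x$, multiplied by $\sigma^3$; and (iii) the identity \eqref{3.4.6} for $\frac{\dif}{\dif t}\int\rho|\dot u|^2\dif x$, multiplied by $\sigma^3$. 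When the time derivative hits the weight one picks up extra terms like $2\sigma\sigma'\int\rho^\al|\frD(u)|^2\dif x$, $3\sigma^2\sigma'\kappa\int|\D\theta|^2\dif x$, $3\sigma^2\sigma'\int\rho|\dot u|^2\dif x$; since $\sigma'=1$ on $[0,1]$ and $\sigma'=0$ afterwards, each of these is supported on $[0,\sigma(T)]$ and is therefore controlled directly by $A_1(\sigma(T))$, $A_2(\sigma(T))$, $A_3(\sigma(T))$ from \eqref{goal:1}, producing contributions of size $C(K)\vr^\al$, $C(K)\vr^\al$, $C(K)\vr^{2\al-1}$ respectively --- all dominated by $\vr^2$ once $\al$ and $\vr$ are large (recall $\al>4$, $\vr$ large; note $\vr^{2\al-1}\cdot\vr^{-(2\al-3)}=\vr^2$, so after the multiplicative prefactors $M$, $M^2$, $\vr$ in \eqref{Lem:4:1} one still needs $\vr$ large relative to $K$ and $M$).

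The main work is bounding the genuinely nonlinear right-hand sides. I would reuse verbatim the estimates \eqref{3.3:4}--\eqref{3.3:7}, \eqref{3.3:11}--\eqref{3.3:14}, \eqref{3.4.3}--\eqref{3.4.5}, now carrying the weights through, and then integrate in $t$ over $(0,T)$. The key interpolation inputs are \eqref{estimate:DuL6}--\eqref{estimate:DuL6:3}, \eqref{DGL2}--\eqref{DGL2:3} (which continue to hold on $[0,T]$ under the running assumptions \eqref{assume:1}--\eqref{assume:3}), together with the global $L^2$-in-time bounds $\int_0^T\|\D u\|_{L^2}^2\dif t\le K_5\vr^{2-\al}$ and $\int_0^T\|\D\theta\|_{L^2}^2\dif t\le K_5\vr^3$ from \autoref{sky}, and $\|\theta-1\|_{L^\infty}^2\le 2K_8\vr^2$ from \eqref{assume:1.75}. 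The delicate point is the coupling: the $\theta$-estimate (ii) contains terms like $\vr\sigma^3\|\D u\|_{L^2}\|\D\theta\|_{L^2}^2\|\D u\|_{L^6}$ and $\vr^{2\al-1}\sigma^3\|\D u\|_{L^2}\|\D u\|_{L^6}^3$, while the $\dot u$-estimate (iii) contains $\vr\sigma^3\|\sqrt\rho\dot\theta\|_{L^2}^2$; these must be absorbed into the left-hand side using precisely the factors $M\vr$ and $M^2\vr$ in front of $\int_0^T\sigma^2\rho|\dot u|^2$ and $\int_0^T\sigma^3\rho|\dot\theta|^2$. This is why $M$ is chosen \emph{large} first (to beat the coupling constants), and only then $\rL_5$ is chosen large depending on $M$ (to beat the residual powers of $\vr$). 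I would organize the absorption as: (a) the $\int\rho|\dot u|^2$-dissipation from (i) absorbs into $M\vr\int\sigma^2\rho|\dot u|^2$; (b) the $\|\D\dot u\|_{L^2}^2$-dissipation from (iii) controls the $\|\D\dot u\|_{L^2}$-terms arising in (i) after using \eqref{2.3:4} to write $\|\D u\|_{L^6}$ in terms of $\|H\|_{L^2}\lesssim\vr^{1/2-\al}\|\sqrt\rho\dot u\|_{L^2}+\dots$; (c) the $\|\D\theta\|_{L^2}^2$ and $\|\sqrt\rho\dot\theta\|_{L^2}^2$ pieces of (ii) and (iii) are matched against $\kappa M^2\vr\sigma^3\|\D\theta\|_{L^2}^2$ and $M^2\vr\int\sigma^3\rho|\dot\theta|^2$.

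The expected main obstacle is keeping the bookkeeping of powers of $\vr$ under control so that every leftover term is $O(\vr^2)$ \emph{after} dividing by the weights $M$, $M^2$, $\vr$ and \emph{before} choosing $K_6$ --- in particular the term $\vr^{2\al-1}\int_0^T\sigma^3\|\D u\|_{L^2}\|\D u\|_{L^6}^3\dif t$, which via $\|\D u\|_{L^6}^2\lesssim K_3$ and $\int_0^T\|\D u\|_{L^2}^2\,\|\D u\|_{L^6}^2\dif t$ estimated by Cauchy--Schwarz against \eqref{estimate:DuL6:3}-type bounds on $[0,T]$ gives something like $C(K)\vr^{2\al-1}\cdot\vr^{1-\al}=C(K)\vr^\al$, still $\ll\vr^2$ for $\al>4$? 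No --- for $\al>4$ this is $\vr^\al>\vr^4\gg\vr^2$, so here the weight $\sigma^3$ and the improved time-integrability coming from the $\sigma$-cutoff (the bad high-power terms live near $t=0$ where $\sigma$ is small) must be exploited: on $[0,\sigma(T)]$ one uses $\sigma^3\le\sigma(T)^3$ together with the already-proven $[0,\sigma(T)]$ estimates rather than the crude global bounds, which converts $\vr^{2\al-1}\int_0^{\sigma(T)}\sigma^3(\dots)$ into a harmless $C(K)\vr^{2\al-1}\int_0^{\sigma(T)}\sigma^3\|\D u\|_{L^6}^2\dif t$-type quantity and then into the $\vr^2$ budget via \eqref{estimate:DuL6:3}. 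Collecting all contributions yields $B_1(T)\le C_{10}\vr^2+C_{11}(K,M)\vr^{\,\beta}$ with $\beta<2$, so choosing $M$ large, then $K_6\ge 2C_{10}$, then $\rL_5$ large enough depending on $M$, $K_6$ (so that $C_{11}\vr^{\beta-2}\le\tfrac12 K_6$) completes the proof.
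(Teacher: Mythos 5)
Your overall architecture matches the paper's: three time-weighted energy identities (momentum tested against $\sigma^2\dot u$, temperature against $\sigma^3\theta_t$, and the material-derivative estimate with weight $\sigma^3$), added with prefactors $M\vr$, $M^2\vr$, $1$ so that the velocity--temperature coupling terms are absorbed, with $M$ fixed first, then $K_6$, then $\rL_5$. However, two of your quantitative claims are wrong in a way that would prevent the estimate from closing. First, the $\sigma'$ terms: you bound $2\sigma\sigma'\int\rho^\al|\frD(u)|^2\dif x$, $3\kappa\sigma^2\sigma'\int|\D\theta|^2\dif x$ and $3\sigma^2\sigma'\int\rho|\dot u|^2\dif x$ through $A_1,A_2,A_3$, i.e.\ by $C(K)\vr^\al$, $C(K)\vr^\al$, $C(K)\vr^{2\al-1}$, and assert these are dominated by $\vr^2$. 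For $\al>4$ they are not ($\vr^\al\ge\vr^4$), and the prefactors $M\vr$, $M^2\vr$ only make things worse. The paper instead integrates the first two in time and invokes the $A_4$ bound \eqref{g:3.5}, which gives $\vr^\al\int_0^{\sigma(T)}\|\D u\|_{L^2}^2\dif t\le CK_4\vr$ and $\kappa\int_0^{\sigma(T)}\|\D\theta\|_{L^2}^2\dif t\le CK_4\vr$, hence $O(\vr^2)$ after multiplication; and it absorbs the third into $M\vr\int_0^T\sigma^2\int\rho|\dot u|^2$ precisely through the choice $M=1+C_{10}+C_{11}$. A sup-in-time bound via $A_1$--$A_3$ cannot work here.

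Second, the critical term $\vr^{2\al}\int_0^T\sigma^3\|\D u\|_{L^2}\|\D u\|_{L^6}^3\dif t$. You correctly observe that the crude estimate produces $\vr^\al\gg\vr^2$, but your proposed repair --- exploiting the smallness of $\sigma$ near $t=0$ together with the $[0,\sigma(T)]$ estimates --- only touches the portion of the integral on $[0,\sigma(T)]$. The dangerous portion is on $[\sigma(T),T]$, where $\sigma\equiv1$ and the weight gives no gain, and where \eqref{estimate:DuL6:2}--\eqref{estimate:DuL6:3} do \emph{not} ``continue to hold'' (they rely on $A_3(\sigma(T))$, which is only assumed up to time $\sigma(T)$). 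What closes this term in the paper is the time-weighted bound \eqref{estimate:DuL6:4}, $\sup_t\sigma^3\|\D u\|_{L^6}^2\le CK_6\vr^{-1-\al}$, obtained by feeding the bootstrap hypothesis \eqref{assume:2} on $B_1(T)$ itself back into \eqref{estimate:DuL6}; combined with $\int_0^T\|\D u\|_{L^2}^2\dif t\le C\vr^{2-\al}$ from Lemma \ref{sky} and the $[0,T]$-extension \eqref{estimate:DuL6:5} of the $L^6$ bound, this yields $C(K)\vr^{1/2}\ll\vr^2$. Without this self-referential use of \eqref{assume:2} (and the analogous use of $\sup_t\sigma^3\|\D\theta\|_{L^2}^2$ from $B_3$ for the term $\vr^2\int_0^T\sigma^3\|\D\theta\|_{L^2}^2\|\D u\|_{L^2}\|\D u\|_{L^6}\dif t$), the argument as you describe it does not close.
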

\begin{proof}
Firstly, we derive the time-weighted uniform bound of $\|\D u\|_{L^6}$. Combining \eqref{estimate:DuL6}, \eqref{assume:2} and \eqref{assume:3}, we obtain
\begin{align}\label{estimate:DuL6:4}
\sigma^3\|\D u\|_{L^6}^2\leq~&C\left(\sigma^3\vr^{1-2\al}\|\sqrt{\rho}\dot{u}\|_{L^2}^2+\sigma^3\vr^{2-2\al}\|\D\theta\|_{L^2}^2+\vr^{-2\al}\|\D\rho\|_{L^2}^2+\sigma^2\vr^{-8}\|\D\rho\|_{L^4}^8\|\D u\|_{L^2}^2\right)\nonumber\\
\leq~&CK_6\vr^{-1-\al},
\end{align}
provided $\al>4$. By \eqref{estimate:DuL6}, \eqref{assume:1}, \eqref{assume:2}, \eqref{assume:3} and \eqref{-4.1:2}, we obtain
\begin{align}\label{estimate:DuL6:5}
\int_0^T\|\D u\|_{L^6}^2\dif t\leq~&C\vr^{1-2\al}\int_0^T\|\sqrt{\rho}\dot{u}\|_{L^2}^2\dif t+C\vr^{2-2\al}\int_0^T\|\D\theta\|_{L^2}^2\dif t\nonumber\\
&+C\vr^{-2\al}\int_0^T\|\D\rho\|_{L^2}^2\dif t+C\vr^{-8}\|\D\rho\|_{L^4}^8\int_0^T\|\D u\|_{L^2}^2\dif t\nonumber\\
\leq~&C\vr^{1-2\al}\left(\int_0^{\sigma(T)}\|\sqrt{\rho}\dot{u}\|_{L^2}^2\dif t+\int_0^T\sigma^2\|\sqrt{\rho}\dot{u}\|_{L^2}^2\dif t\right)+C\vr^{2-2\al}\int_0^T\|\D\theta\|_{L^2}^2\dif t\nonumber\\
&+C\vr^{-2\al}\int_0^T\|\D\rho\|_{L^2}^2\dif t+C\vr^{-8}\|\D\rho\|_{L^4}^8\int_0^T\|\D u\|_{L^2}^2\dif t\nonumber\\
\leq~&C(K_1+K_9)\vr^{1-\al}.
\end{align}

Next, in view of \eqref{B1}, we divide the proof into the following four steps.

\textbf{Step 1}: Estimates of $A_{5,1}(T)$.

Multiplying $(\ref{FCNS:2})_2$ by $\sigma^2\dot{u}$ and integrating the resulting equality over $\R^3$ yield
\begin{align}\label{Lem:4:2}
\sigma^2\int\rho|\dot{u}|^2\dif x=~&-\sigma^2\int\dot{u}\cdot\D P\dif x-2\mu\sigma^2\int\rho^\al\D\dot{u}:\frD(u)\dif x-\lambda\sigma^2\int\rho^\al\div u\div\dot{u}\dif x\nonumber\\
:=~&\sum\limits_{i=1}^3N_i.
\end{align}

Using \eqref{eqn:P}, we get after integration by parts that
\begin{align}\label{Lem:4:4}
N_1=~&R\sigma^2\int(\rho\theta-\vr)\div u_t\dif x+R\sigma^2\int(\rho\theta-\vr)u\cdot\D\div u\dif x+R\sigma^2\int(\rho\theta-\vr)\partial_ju^i\partial_iu^j\dif x\nonumber\\
=~&R\frac{\dif}{\dif t}\left(\sigma^2\int(\rho\theta-\vr)\div u\dif x\right)-2R\sigma\sigma'\int(\rho\theta-\vr)\div u\dif x-\sigma^2\int P_t\div u \dif x\nonumber\\
&+R\sigma^2\int(\rho\theta-\vr)u\cdot\D\div u\dif x+R\sigma^2\int(\rho\theta-\vr)\partial_ju^i\partial_iu^j\dif x\nonumber\\
=~&R\frac{\dif}{\dif t}\left(\sigma^2\int(\rho\theta-\vr)\div u\dif x\right)-2R\sigma\sigma'\int(\rho\theta-\vr)\div u\dif x+\sigma^2\int\div(Pu)\div u \dif x\nonumber\\
&-R\sigma^2\int\rho\dot{\theta}\div u\dif x+R\sigma^2\int(\rho\theta-\vr)u\cdot\D\div u\dif x+R\sigma^2\int(\rho\theta-\vr)\partial_ju^i\partial_iu^j\dif x\nonumber\\
=~&R\frac{\dif}{\dif t}\left(\sigma^2\int(\rho\theta-\vr)\div u\dif x\right)-2R\sigma\sigma'\int(\rho\theta-\vr)\div u\dif x-R\sigma^2\int\rho\dot{\theta}\div u\dif x\nonumber\\
&+R\sigma^2\int\rho\theta\partial_ju^i\partial_iu^j\dif x\nonumber\\
\leq~&R\frac{\dif}{\dif t}\left(\sigma^2\int(\rho\theta-\vr)\div u\dif x\right)+C\sigma\sigma'\|\rho\theta-\vr\|_{L^2}\|\D u\|_{L^2}+C\vr^{\frac{1}{2}}\sigma^2\|\sqrt{\rho}\dot{\theta}\|_{L^2}\|\D u\|_{L^2}\nonumber\\
&+C\vr\sigma^2\int\theta|\D u|^2\dif x\nonumber\\
\leq~&R\frac{\dif}{\dif t}\left(\sigma^2\int(\rho\theta-\vr)\div u\dif x\right)+C\sigma\sigma'\left(\vr\|\theta-1\|_{L^2}+\|\rho-\vr\|_{L^2}\right)\|\D u\|_{L^2}\nonumber\\
&+C\vr^{\frac{1}{2}}\sigma^2\|\sqrt{\rho}\dot{\theta}\|_{L^2}\|\D u\|_{L^2}+C\vr\sigma^2\|\D u\|_{L^2}^{\frac{3}{2}}\|\D\theta\|_{L^2}\|\D u\|_{L^6}^{\frac{1}{2}}+C\vr\sigma^2\|\D u\|_{L^2}^2
\end{align}

Next, integration by parts gives
\begin{align}\label{Lem:4:7}
N_2=~&-2\mu\sigma^2\int\rho^\al\D u_t:\frD(u)\dif x-2\mu\sigma^2\int\rho^\al\D((u\cdot\D) u):\frD(u)\dif x\nonumber\\
=~&-\mu\frac{\dif}{\dif t}\left(\sigma^2\int\rho^\al|\frD(u)|^2\dif x\right)+2\mu\sigma\sigma'\int\rho^\al|\frD(u)|^2\dif x-\al\mu\sigma^2\int\rho^{\al-1}\div(\rho u)|\frD(u)|^2\dif x\nonumber\\
&+\mu\sigma^2\int\div(\rho^\al u)|\frD(u)|^2\dif x-\mu\sigma^2\int\rho^\al\partial_ju^k\partial_ku^i\left(\partial_iu^j+\partial_ju^i\right)\dif x\nonumber\\
\leq~&-\mu\frac{\dif}{\dif t}\left(\sigma^2\int\rho^\al|\frD(u)|^2\dif x\right)+C\vr^\al\sigma\sigma'\|\D u\|_{L^2}^2+C\vr^\al\sigma^2\|\D u\|_{L^2}^{\frac{3}{2}}\|\D u\|_{L^6}^{\frac{3}{2}},
\end{align}
and
\begin{align}\label{Lem:4:8}
N_3\leq -\frac{\lambda}{2}\frac{\dif}{\dif t}\left(\sigma^2\int\rho^\al(\div u)^2\dif x\right)+C\vr^\al\sigma\sigma'\|\D u\|_{L^2}^2+C\vr^\al\sigma^2\|\D u\|_{L^2}^{\frac{3}{2}}\|\D u\|_{L^6}^{\frac{3}{2}}.
\end{align}

Substituting \eqref{Lem:4:4}--\eqref{Lem:4:8} into \eqref{Lem:4:2}, we obtain after integration with respect to $t\in[0,T]$ that
\begin{align}\label{Lem:4:9}
&\sigma^2\left(\frac{\vr}{2}\right)^\al\|\D u\|_{L^2}^2+\int_0^T\int\sigma^2\rho|\dot{u}|^2\dif x\dif t\nonumber\\
\leq~&C\sigma^2\int\rho^\al\left(\mu|\frD(u)|^2+\frac{\lambda}{2}(\div u)^2\right)\dif x+\int_0^T\int\sigma^2\rho|\dot{u}|^2\dif x\dif t\nonumber\\
\leq~&C\sup\limits_{t\in[0,T]}\sigma^2\left(\vr\|\theta-1\|_{L^2}+\|\rho-\vr\|_{L^2}\right)\|\D u\|_{L^2}+C\int_0^{\sigma(T)}\sigma\left(\vr\|\theta-1\|_{L^2}+\|\rho-\vr\|_{L^2}\right)\|\D u\|_{L^2}\dif t\nonumber\\
&+C\vr^{\frac{1}{2}}\left(\int_0^T\sigma^3\|\sqrt{\rho}\dot{\theta}\|_{L^2}^2\dif t\right)^{\frac{1}{2}}\left(\int_0^T\|\D u\|_{L^2}^2\dif t\right)^{\frac{1}{2}}+C\vr\int_0^T\sigma^2\|\D u\|_{L^2}^{\frac{3}{2}}\|\D u\|_{L^6}^{\frac{1}{2}}\|\D\theta\|_{L^2}\dif t\nonumber\\
&+C\vr\int_0^T\|\D u\|_{L^2}^2\dif t+C\vr^\al\int_0^{\sigma(T)}\|\D u\|_{L^2}^2\dif t+C\vr^\al\int_0^T\sigma^2\|\D u\|_{L^2}^{\frac{3}{2}}\|\D u\|_{L^6}^{\frac{3}{2}}\dif t\nonumber\\
\leq~&C\vr\sup\limits_{t\in[0,T]}\sigma^2\left(1+\|\D\theta\|_{L^2}\right)\|\D u\|_{L^2}+C\vr\int_0^{\sigma(T)}\left(1+\|\D\theta\|_{L^2}\right)\|\D u\|_{L^2}\dif t\nonumber\\
&+\int_0^T\int\sigma^3\rho|\dot{\theta}|^2\dif x\dif t+C\vr\int_0^T\sigma^2\|\D u\|_{L^2}^{\frac{3}{2}}\|\D u\|_{L^6}^{\frac{1}{2}}\|\D\theta\|_{L^2}\dif t+C\vr\int_0^T\|\D u\|_{L^2}^2\dif t\nonumber\\
&+C\vr^\al\int_0^{\sigma(T)}\|\D u\|_{L^2}^2\dif t+C\vr^\al\int_0^T\sigma^2\|\D u\|_{L^2}^{\frac{3}{2}}\|\D u\|_{L^6}^{\frac{3}{2}}\dif t,
\end{align}
where we have used \eqref{lem3.1:g3} in the last inequality.


\bigskip

\textbf{Step 2}: Estimates of $A_{5,2}(T)$.

Multiplying $(\ref{FCNS:2})_3$ by $\sigma^3\theta_t$ and integrating the resulting equality over $\R^3$ lead to
\begin{align}\label{Lem:5:2}
&\frac{\kappa}{2}\frac{\dif}{\dif t}\left(\sigma^3\int|\D\theta|^2\dif x\right)+\sigma^3\int\rho|\dot{\theta}|^2\dif x\nonumber\\
=~&\frac{3}{2}\kappa\sigma^2\sigma'\int|\D\theta|^2\dif x+\sigma^3\int\rho\dot{\theta}u\cdot\D\theta\dif x-R\sigma^3\int\rho\theta\div u\dot{\theta}\dif x\nonumber\\
&+R\sigma^3\int\rho\theta u\cdot\D\theta\div u\dif x+2\mu\sigma^3\int\rho^\al\dot{\theta}|\frD(u)|^2\dif x+\lambda\sigma^3\int\rho^\al\dot{\theta}(\div u)^2\dif x\nonumber\\
&-2\mu\sigma^3\int\rho^\al u\cdot\D\theta|\frD(u)|^2\dif x-\lambda\sigma^3\int\rho^\al u\cdot\D\theta(\div u)^2\dif x.
\end{align}

Similar to \eqref{3.3:11}--\eqref{3.3:14}, we obtain after integration with respect to $t\in[0,T]$ that
\begin{align}\label{Lem:5:4}
&\kappa\sigma^3\int|\D\theta|^2\dif x+\int_0^T\int\sigma^3\rho|\dot{\theta}|^2\dif x\dif t\nonumber\\
\leq~&\frac{3}{2}\kappa\int_0^{\sigma(T)}\|\D\theta\|_{L^2}^2\dif t+C\vr\int_0^T\|\D u\|_{L^2}^2\dif t+C\vr\int_0^T\sigma^3\|\D u\|_{L^2}\|\D u\|_{L^6}\|\D\theta\|_{L^2}^2\dif t\nonumber\\
&+C\vr^{2\al-1}\int_0^T\sigma^3\|\D u\|_{L^2}\|\D u\|_{L^6}^3\dif t.
\end{align}

\bigskip

\textbf{Step 3}: Estimates of $A_{5,3}(T)$.

Operating $\sigma^3\dot{u}^j(\pt+\mathrm{div}(u\cdot))$ to $(\ref{FCNS:2})_2^j$, summing with respect to $j$, and integrating the resulting equation over $\R^3$, we obtain
\begin{align}\label{3.42}
&\frac{1}{2}\frac{\dif}{\dif t}\int\sigma^3\rho|\dot u|^2\dif x-\frac{3}{2}\sigma^2\sigma'\int \rho|\dot u|^2\dif x\nonumber\\
=~& -\int\sigma^3\dot u^j\left(\partial_jP_t+\div (u\partial_j P)\right)\dif x+2\mu\int\sigma^3\dot u^j\Big[\pt(\div(\rho^\al\frD(u)))+\div (u\div(\rho^\al\frD(u)))\Big]\dif x\nonumber\\
&+\lambda\int\sigma^3\dot u^j\Big[\pt(\partial_j(\rho^\al\div u))+\div (u\cdot\partial_j(\rho^\al\div u))\Big]\dif x. 
\end{align} 
Similar to \eqref{3.4.3}--\eqref{3.4.5}, we have
\begin{align}\label{e3.308}
&\frac{1}{2}\frac{\dif}{\dif t}\int\sigma^3\rho|\dot u|^2\dif x+\frac{\mu\sigma^3}{4}\left(\frac{\vr}{2}\right)^\al\|\D\dot u\|_{L^2}^2\nonumber\\
\leq~&\frac{3}{2}\sigma^2\sigma'\int\rho|\dot u|^2\dif x+C\sigma^3\left(\vr\|\sqrt{\rho}\dot\theta\|_{L^2}^2+\vr^2\|\D\theta\|_{L^2}^2\|\D u\|_{L^2}\|\D u\|_{L^6}+\vr^\al\|\D u\|_{L^2}\|\D u\|_{L^6}^3\right).
\end{align}
Integrating \eqref{e3.308} over $[0,T]$, we have
\begin{align}\label{e3.3011}
&\sigma^3\int\rho|\dot u|^2\dif x+\left(\frac{\vr}{2}\right)^\al\int_0^T\sigma^3\|\D\dot u\|_{L^2}^2\dif t\nonumber\\
\leq~&C_5\int_0^{\sigma(T)}\int\sigma^2\rho|\dot u|^2\dif x\dif t+C_5\vr\int_0^T\sigma^3\|\sqrt{\rho}\dot\theta\|_{L^2}^2\dif t\nonumber\\
&+C\vr^2\int_0^T\sigma^3\|\D\theta\|_{L^2}^2\|\D u\|_{L^2}\|\D u\|_{L^6}\dif t+C\vr^\al\int_0^T\sigma^3\|\D u\|_{L^2}\|\D u\|_{L^6}^3\dif t.
\end{align}

\bigskip

\textbf{Step 4}: Estimates of $A_5(T)$.

Now we take $M=1+C_5$. Multiplying \eqref{Lem:4:9} by $M\vr$, multiplying \eqref{Lem:5:4} by $M^2\vr$, and adding them to \eqref{e3.3011}, we obtain
\begin{align}\label{B1:1}
A_5(T)\leq~&C\vr^2\sup\limits_{t\in[0,T]}\sigma^2\left(1+\|\D\theta\|_{L^2}\right)\|\D u\|_{L^2}+C\vr^2\int_0^{\sigma(T)}\left(1+\|\D\theta\|_{L^2}\right)\|\D u\|_{L^2}\dif t\nonumber\\
&+C\vr^{\al+1}\int_0^{\sigma(T)}\|\D u\|_{L^2}^2\dif t+C\vr\int_0^{\sigma(T)}\|\D\theta\|_{L^2}^2\dif t+C\vr^2\int_0^T\|\D u\|_{L^2}^2\dif t\nonumber\\
&+C\vr^{\al+1}\int_0^T\sigma^2\|\D u\|_{L^2}^{\frac{3}{2}}\|\D u\|_{L^6}^{\frac{3}{2}}\dif t+C\vr^2\int_0^T\sigma^3\|\D\theta\|_{L^2}^2\|\D u\|_{L^2}\|\D u\|_{L^6}\dif t\nonumber\\
&+C\vr^{2\al}\int_0^T\sigma^3\|\D u\|_{L^2}\|\D u\|_{L^6}^3\dif t+C\vr^2\int_0^T\sigma^2\|\D u\|_{L^2}^{\frac{3}{2}}\|\D u\|_{L^6}^{\frac{1}{2}}\|\D\theta\|_{L^2}\dif t\nonumber\\
\leq~&C\vr^2\sup\limits_{t\in[0,T]}\sigma\|\D u\|_{L^2}+C\vr^2\|\D \theta\|_{L^2}^{\frac{1}{3}}\sup\limits_{t\in[0,T]}\left(\sigma\|\D u\|_{L^2}\right)\sup\limits_{t\in[0,T]}\left(\sigma\|\D\theta\|_{L^2}^{\frac{2}{3}}\right)\nonumber\\
&+C\vr^2\int_0^{\sigma(T)}\left(1+\|\D\theta\|_{L^2}\right)\|\D u\|_{L^2}\dif t+C\vr^{\al+1}\int_0^{\sigma(T)}\|\D u\|_{L^2}^2\dif t+C\vr\int_0^{\sigma(T)}\|\D\theta\|_{L^2}^2\dif t\nonumber\\
&+C\vr^2\int_0^T\|\D u\|_{L^2}^2\dif t+C\vr^{\al+1}\sup\limits_{t\in[0,T]}\left(\sigma^2\|\D u\|_{L^6}\right)\int_0^T\|\D u\|_{L^2}^{\frac{3}{2}}\|\D u\|_{L^6}^{\frac{1}{2}}\dif t\nonumber\\
&+C\vr^2\sup\limits_{t\in[0,T]}\left(\sigma^3\|\D\theta\|_{L^2}^2\right)\int_0^T\|\D u\|_{L^2}\|\D u\|_{L^6}\dif t\nonumber\\
&+C\vr^{2\al}\sup\limits_{t\in[0,T]}\left(\sigma^3\|\D u\|_{L^6}^2\right)\int_0^T\|\D u\|_{L^2}\|\D u\|_{L^6}\dif t+C\vr^2\sup\limits_{t\in[0,T]}\left(\sigma^2\|\D\theta\|_{L^2}\right)\int_0^T\|\D u\|_{L^2}^{\frac{3}{2}}\|\D u\|_{L^6}^{\frac{1}{2}}\dif t\nonumber\\
\leq~&C_6K_1\vr^2+C(K)\left(\vr^{\frac{1}{2}}+\vr^{\frac{17}{6}-\frac{\al}{3}}\right)\nonumber\\
\leq~&K_6\vr^2,
\end{align}
provided $\vr\geq\rL_5$, where we have used \eqref{assume:1}, \eqref{assume:2}, \eqref{-4.1:2}, \eqref{estimate:DuL6:4},  \eqref{estimate:DuL6:5}, and taken $K_6$ and $\rL_5$ large enough, such that $C_6K_1\leq\frac{1}{2}K_6$ and $C(K)\left(\vr^{\frac{1}{2}}+\vr^{\frac{17}{6}-\frac{\al}{3}}\right)\leq\frac{1}{2}K_6\vr$.
\end{proof}

In the following Lemma \ref{Lem 3.5}, we show the second-order time-weighted uniform estimate of temperature. Then, by using interpolation inequality,  the uniform upper bound of temperature on the time interval $[\sigma(T), T]$ is obtained.

\begin{lemma}\label{Lem 3.5}
Under the conditions of Proposition \ref{p4.1}, there exist positive constants $\rL_6$, $K_7$ and $K_8$ depending on $\mu$, $\lambda$, $\kappa$, and $R$, such that if $(\rho,u,\theta)$ is a strong solution of \eqref{FCNS:2}--\eqref{data and far} on $\R^3\times(0,T]$, it holds
\begin{equation}\label{3.5:2}
A_6(T)=\sup\limits_{t\in[0,T]}\sigma^6\int\rho|\dot{\theta}|^2\dif x+\kappa\int_0^T\int\sigma^6| \D \dot{\theta}|^2\dif x\dif t\leq K_7\vr^2,
\end{equation}
and
\begin{equation}\label{3.5:1}
\sup\limits_{t\in[\sigma(T),T]}\|\theta-1\|_{L^\infty}^2\leq K_8\vr^2,
\end{equation}
provided that $\vr\geq \rL_6$.
\end{lemma}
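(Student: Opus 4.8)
The plan is to establish the weighted bound \eqref{3.5:2} first and then to deduce \eqref{3.5:1} from it.

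\emph{Step 1 (proof of \eqref{3.5:2}).} I would differentiate the temperature equation along the flow: apply $\sigma^6\dot\theta\,(\pt+\div(u\cdot))$ to $(\ref{FCNS:2})_3$, integrate over $\R^3$, and use $(\ref{FCNS:2})_1$. Transporting $\rho\dot\theta$ produces $\tfrac12\tfrac{\dif}{\dif t}\big(\sigma^6\int\rho|\dot\theta|^2\dif x\big)-3\sigma^5\sigma'\int\rho|\dot\theta|^2\dif x$, while the heat-conduction term $\kappa\Delta\theta$, after integrating by parts twice and using $\D\theta_t=\D\dot\theta-\D(u\cdot\D\theta)$, produces the good dissipation $\kappa\sigma^6\int|\D\dot\theta|^2\dif x$ up to commutators; since $u\to0$ and $\D\theta\to0$ at infinity we have $\dot\theta\to0$, hence $\|\dot\theta\|_{L^6}\le C\|\D\dot\theta\|_{L^2}$, which is what controls $\dot\theta$ by the dissipation. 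The remaining right-hand side — the commutators, together with $P\div u$ and $2\mu\rho^\al|\frD(u)|^2+\lambda\rho^\al(\div u)^2$ acted on by $\pt+\div(u\cdot)$ — I would first reorganize around the decaying quantities $\rho\theta-\vr$, $\theta-1$, $\D\theta$ so that no nondecaying factor survives, and then estimate by Cauchy--Schwarz and Young's inequality, absorbing $\tfrac{\kappa}{2}\sigma^6\int|\D\dot\theta|^2\dif x$ into the left. In every surviving product the weight $\sigma^6$ splits as $\sigma^3\cdot\sigma^3$, so the $\sigma^3$-weighted quantities $\int_0^T\sigma^3\rho|\dot\theta|^2$, $\int_0^T\sigma^3|\D\dot u|^2$ and $\sup_t\sigma^3\|\D\theta\|_{L^2}^2$ are all controlled by $B_1(T)\le K_6\vr^2$ from Lemma~\ref{Lem:4}, the remaining $\|\D u\|_{L^6}$ factors by \eqref{estimate:DuL6:4}--\eqref{estimate:DuL6:5}, and $\|\theta-1\|_{L^\infty}$ (which enters only for $t\in[\sigma(T),T]$) by \eqref{assume:1.75}. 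Integrating over $[0,T]$, the $t=0$ boundary term is killed by the $\sigma^6$ weight — this is why the high power is taken, since only $\theta_0\in H^2$ is assumed and $\dot\theta_0$ is uncontrolled — and $\int_0^T\sigma^5\sigma'\int\rho|\dot\theta|^2\le\int_0^T\sigma^3\int\rho|\dot\theta|^2\le CK_6\vr$ is subordinate. Keeping track of the powers of $\vr$ (this is where $\al>4$ is used), every term is $\le C\vr^2$, and taking $K_7,\rL_7$ large gives \eqref{3.5:2}.

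\emph{Step 2 (proof of \eqref{3.5:1}).} For $t\in[\sigma(T),T]$ one has $\sigma\equiv1$. I would read $(\ref{FCNS:2})_3$ as the elliptic equation $-\kappa\Delta\theta=-\rho\dot\theta-P\div u+2\mu\rho^\al|\frD(u)|^2+\lambda\rho^\al(\div u)^2$, so the standard $L^2$ elliptic estimate gives $\kappa\|\D^2\theta\|_{L^2}\le C\big(\|\rho\dot\theta\|_{L^2}+\|P\div u\|_{L^2}+\vr^\al\|\D u\|_{L^4}^2\big)$. By \eqref{3.5:2} with $\sigma\equiv1$, $\|\rho\dot\theta\|_{L^2}\le C\vr^{1/2}\|\sqrt\rho\dot\theta\|_{L^2}\le C\vr^{3/2}$; using $B_1(T)\le K_6\vr^2$, \eqref{estimate:DuL6:4} and Lemma~\ref{Lem3.1}, one checks that $\|P\div u\|_{L^2}+\vr^\al\|\D u\|_{L^4}^2=o(\vr^{3/2})$ as $\vr\to\infty$ (again using $\al>4$), hence $\|\D^2\theta\|_{L^2}\le C\vr^{3/2}$ there. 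Finally the Gagliardo--Nirenberg inequality $\|\theta-1\|_{L^\infty}^2\le C\|\theta-1\|_{L^6}\|\D\theta\|_{L^6}\le C\|\D\theta\|_{L^2}\|\D^2\theta\|_{L^2}$, together with $\|\D\theta\|_{L^2}^2\le C\vr$ for $t\ge\sigma(T)$ (also a consequence of $B_1(T)\le K_6\vr^2$), yields $\|\theta-1\|_{L^\infty}^2\le C\vr^{1/2}\cdot\vr^{3/2}=C\vr^2$, i.e. \eqref{3.5:1} after fixing $K_8$.

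\emph{Main obstacle.} The delicate part is Step~1: the material-derivative differentiation of the temperature equation generates a large number of nonlinear terms — notably those carrying $\rho^\al|\D u|\,|\D\dot u|$, $|u|\,|\D u|^2$ and $|\D u|^3$, and commutator terms involving $\D^2\theta$ that have to be eliminated using the equation itself — and each of them must be bounded by $C\vr^2$. This forces a careful accounting of the powers of $\vr$ (exactly where $\al>4$ enters) and relies on the weight $\sigma^6$ being chosen both to annihilate the uncontrolled datum $\dot\theta_0$ and to pair against the $\sigma^3$-weighted quantities supplied by $B_1(T)$. Step~2 is then routine, via elliptic regularity and Gagliardo--Nirenberg.
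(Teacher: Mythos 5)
Your proposal follows essentially the same route as the paper: for \eqref{3.5:2} the paper applies $\pt+\div(u\cdot)$ to the temperature equation, tests with $\sigma^6\dot\theta$, handles the commutators of $\kappa\Delta$ exactly as you describe, and closes using the $\sigma^3$-weighted quantities in $B_1(T)$ together with the $\|\D u\|_{L^6}$ bounds \eqref{estimate:DuL6:4}--\eqref{estimate:DuL6:5}; for \eqref{3.5:1} it likewise combines the $L^2$ elliptic estimate \eqref{estimate:D2theta} with $\|\theta-1\|_{L^\infty}^2\le C\|\D\theta\|_{L^2}\|\D^2\theta\|_{L^2}$ and the $B_5$ bound, with the same $\vr^{1/2}\cdot\vr^{3/2}$ bookkeeping. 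The only cosmetic difference is that in Step 1 the paper never invokes \eqref{assume:1.75}: the surviving factors of $\theta-1$ are controlled on all of $[0,T]$ via $\|\theta-1\|_{L^2}\le C(1+\|\D\theta\|_{L^2})$ from Lemma \ref{Lem3.1}, rather than through the $L^\infty$ bootstrap bound, which is only available on $[\sigma(T),T]$.
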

\begin{proof}
Applying the operator $\pt+\div(u\cdot)$ to $(\ref{FCNS:2})_3$, we get 
\begin{align}\label{3.5:3}
&\rho\pt\dot{\theta}+\rho u\cdot\D\dot{\theta}-\kappa\Delta\dot{\theta}+\kappa\left(\Delta u\cdot\D\theta+2\D u:\D^2\theta-\div u\Delta\theta\right)\nonumber\\
=~&-R\rho\dot{\theta}\div u-R\rho\theta\div\dot{u}+R\rho\theta\partial_iu^j\partial_ju^i+\pt\left(\rho^\al\left(2\mu|\frD(u)|^2+\lambda(\div u)^2\right)\right)\nonumber\\
&+\div\left(\rho^\al u\left(2\mu|\frD(u)|^2+\lambda(\div u)^2\right)\right)\nonumber\\
=~&-R\rho\dot{\theta}\div u-R\rho\theta\div\dot{u}+R\rho\theta\partial_iu^j\partial_ju^i-(\al-1)\rho^\al\div u\left(2\mu|\frD(u)|^2+\lambda(\div u)^2\right)\nonumber\\
&+\mu\rho^\al\left(\partial_iu^j+\partial_ju^i\right)\left(\partial_i\dot{u}^j+\partial_j\dot{u}^i\right)-\mu\rho^\al\left(\partial_iu^j+\partial_ju^i\right)\left(\partial_iu^k\partial_ku^j+\partial_ju^k\partial_ku^i\right)\nonumber\\
&+2\lambda\rho^\al\div u\div\dot{u}-2\lambda\rho^\al\div u\partial_iu^j\partial_ju^i,
\end{align}
where we have used $(\ref{FCNS:2})_1$ and \eqref{eqn:P}.

Making inner product with $\sigma^6\dot{\theta}$ and noticing that
\begin{align}\label{3.5:4}
&\int\dot{\theta}\left(\Delta u\cdot\D\theta+2\D u:\D^2\theta-\div u\Delta\theta\right)\dif x\nonumber\\
=~&\int\dot{\theta}\left(\partial_j^2u^i\partial_i\theta+\partial_iu^j\partial_i\partial_j\theta\right)\dif x+\int\dot{\theta}\left(\partial_ju^i\partial_i\partial_j\theta-\partial_iu^i\partial_j^2\theta\right)\dif x\nonumber\\
=~&-\int\partial_j\dot{\theta}\partial_ju^i\partial_i\theta\dif x-\int\partial_i\dot{\theta}\partial_ju^i\partial_j\theta\dif x-\int\partial_j\dot{\theta}\partial_iu^i\partial_j\theta\dif x-\int\dot{\theta}\left(\partial_i\partial_ju^i\partial_j\theta-\partial_j\partial_iu^i\partial_j\theta\right)\dif x\nonumber\\
=~&-\int\partial_j\dot{\theta}\partial_ju^i\partial_i\theta\dif x-\int\partial_i\dot{\theta}\partial_ju^i\partial_j\theta\dif x-\int\partial_j\dot{\theta}\partial_iu^i\partial_j\theta\dif x,
\end{align}
we obtain
\begin{align}\label{3.5:5}
&\frac{1}{2}\frac{\dif}{\dif t}\left(\sigma^6\int\rho|\dot{\theta}|^2\dif x\right)+\kappa\sigma^6\int|\D\dot{\theta}|^2\dif x\nonumber\\
\leq~&3\sigma^5\sigma'\int\rho|\dot{\theta}|^2\dif x+C\sigma^6\int|\D\dot{\theta}||\D u||\D\theta|\dif x+C\sigma^6\int\rho|\dot{\theta}|^2|\D u|\dif x+C\sigma^6\int\rho|\theta-1||\dot{\theta}|\left(|\D\dot{u}|+|\D u|^2\right)\dif x\nonumber\\
&+C\sigma^6\int\rho|\dot{\theta}|\left(|\D\dot{u}|+|\D u|^2\right)\dif x+C\sigma^6\int\rho^\al|\dot{\theta}||\D u|\left(|\D\dot{u}|+|\D u|^2\right)\dif x\nonumber\\
:=~&3\sigma^5\sigma'\int\rho|\dot{\theta}|^2\dif x+\sum\limits_{i=1}^5P_i.
\end{align}
From \eqref{Lem:4:1}, \eqref{estimate:DuL6:4} and \eqref{lem3.1:g3}, the terms $P_i$ can be estimated as follows:
\begin{align}\label{3.5:6}
P_1=C\sigma^6\int|\D\dot{\theta}||\D u||\D\theta|\dif x\leq~&\frac{\kappa\sigma^6}{8}\|\D\dot{\theta}\|_{L^2}^2+C\sigma^6\|\D u\|_{L^2}\|\D u\|_{L^6}\|\D^2\theta\|_{L^2}^2\nonumber\\
\leq~&\frac{\kappa\sigma^6}{8}\|\D\dot{\theta}\|_{L^2}^2+C(K)\vr^{-\al}\sigma^3\|\D^2\theta\|_{L^2}^2,
\end{align}
\begin{align}\label{3.5:7}
P_2=C\sigma^6\int\rho|\dot{\theta}|^2|\D u|\dif x\leq~&\frac{\kappa\sigma^6}{8}\|\D\dot{\theta}\|_{L^2}^2+C\vr\sigma^6\|\D u\|_{L^2}\|\D u\|_{L^6}\|\sqrt{\rho}\dot{\theta}\|_{L^2}^2\nonumber\\
\leq~&\frac{\kappa\sigma^6}{8}\|\D\dot{\theta}\|_{L^2}^2+C(K)\vr^{1-\al}\sigma^3\|\sqrt{\rho}\dot{\theta}\|_{L^2}^2,
\end{align}
\begin{align}\label{3.5:8}
P_3=~&C\sigma^6\int\rho|\theta-1||\dot{\theta}|\left(|\D\dot{u}|+|\D u|^2\right)\dif x\nonumber\\
\leq~&\frac{\kappa\sigma^6}{8}\|\D\dot{\theta}\|_{L^2}^2+C\vr^2\sigma^6\|\theta-1\|_{L^2}\|\D\theta\|_{L^2}\|\D\dot{u}\|_{L^2}^2+C\vr^2\sigma^6\|\theta-1\|_{L^2}\|\D\theta\|_{L^2}\|\D u\|_{L^2}\|\D u\|_{L^6}^3\nonumber\\
\leq~&\frac{\kappa\sigma^6}{8}\|\D\dot{\theta}\|_{L^2}^2+C\vr^2\sigma^6\left(1+\|\D\theta\|_{L^2}\right)\|\D\theta\|_{L^2}\|\D\dot{u}\|_{L^2}^2+C\vr^2\sigma^6\left(1+\|\D\theta\|_{L^2}\right)\|\D\theta\|_{L^2}\|\D u\|_{L^2}\|\D u\|_{L^6}^3\nonumber\\
\leq~&\frac{\kappa\sigma^6}{8}\|\D\dot{\theta}\|_{L^2}^2+C(K)\vr^3\sigma^3\|\D\dot{u}\|_{L^2}^2+C(K)\vr^{2-\al}\|\D u\|_{L^2}\|\D u\|_{L^6},
\end{align}
\begin{align}\label{3.5:9}
P_4=C\sigma^6\int\rho|\dot{\theta}|\left(|\D\dot{u}|+|\D u|^2\right)\dif x\leq~&C\vr^{\frac{1}{2}}\sigma^6\|\sqrt{\rho}\dot{\theta}\|_{L^2}\|\D\dot{u}\|_{L^2}+C\vr^{\frac{1}{2}}\sigma^6\|\sqrt{\rho}\dot{\theta}\|_{L^2}\|\D u\|_{L^2}^{\frac{1}{2}}\|\D u\|_{L^6}^{\frac{3}{2}}\nonumber\\
\leq~&C\vr^{\frac{1}{2}}\sigma^6\|\sqrt{\rho}\dot{\theta}\|_{L^2}\|\D\dot{u}\|_{L^2}+C(K)\vr^{\frac{1-\al}{2}}\sigma^3\|\sqrt{\rho}\dot{\theta}\|_{L^2}\|\D u\|_{L^6},
\end{align}
\begin{align}\label{3.5:10}
P_5=~&C\sigma^6\int\rho^\al|\dot{\theta}||\D u|\left(|\D\dot{u}|+|\D u|^2\right)\dif x\nonumber\\
\leq~&\frac{\kappa\sigma^6}{8}\|\D\dot{\theta}\|_{L^2}^2+C\vr^{2\al}\sigma^6\|\D u\|_{L^2}\|\D u\|_{L^6}\|\D\dot{u}\|_{L^2}^2+C\vr^{\al-\frac{1}{2}}\sigma^6\|\sqrt{\rho}\dot{\theta}\|_{L^2}\|\D u\|_{L^6}^3\nonumber\\
\leq~&\frac{\kappa\sigma^6}{8}\|\D\dot{\theta}\|_{L^2}^2+CK_6\vr^\al\sigma^3\|\D\dot{u}\|_{L^2}^2+C(K)\vr^{\-\frac{3}{2}}\sigma^3\|\sqrt{\rho}\dot{\theta}\|_{L^2}\|\D u\|_{L^6}.
\end{align}

In order to estimate $\|\D^2\theta\|_{L^2}$ on the right hand side of \eqref{3.5:6}, we apply the standard $L^2$-estimate to the equation $(\ref{FCNS:2})_3$ to obtain
\begin{align}\label{estimate:D2theta}
\|\D^2\theta\|_{L^2}^2 \leq~& C\left(\|\rho\dot{\theta}\|_{L^2}^2+\vr^{2\al}\|\D u\|_{L^4}^4+\vr^2\|\theta \D u\|_{L^2}^2\right)\nonumber\\
\leq~& C\left(\|\rho\dot{\theta}\|_{L^2}^2+\vr^{2\al}\|\D u\|_{L^4}^4+\vr^2\|\theta-1\|_{L^6}^2\|\D u\|_{L^2}\|\D u\|_{L^6}+\vr^2\|\D u\|_{L^2}^2\right)\nonumber\\
\leq~& C\left(\vr\|\sqrt{\rho}\dot{\theta}\|_{L^2}^2+\vr^{2\al}\|\D u\|_{L^2}\|\D u\|_{L^6}^3+\vr^2\|\D\theta\|_{L^2}^2\|\D u\|_{L^2}\|\D u\|_{L^6}+\vr^2\|\D u\|_{L^2}^2\right).
\end{align}
Hence, from \eqref{-4.1:2}, \eqref{Lem:4:1} and \eqref{estimate:DuL6:5}, it yields that
\begin{align}\label{estimate:D2theta:2}
\int_0^T\sigma^3\|\D^2\theta\|_{L^2}^2\dif t\leq~& C\vr\int_0^T\sigma^3\|\sqrt{\rho}\dot{\theta}\|_{L^2}^2\dif t+C\vr^{2\al}\sup\limits_{t\in[0,T]}\left(\sigma^3\|\D u\|_{L^6}^2\right)\int_0^T\|\D u\|_{L^2}\|\D u\|_{L^6}\dif t\nonumber\\
&+C\vr^2\sup\limits_{t\in[0,T]}\left(\sigma^3\|\D\theta\|_{L^2}^2\right)\int_0^T\|\D u\|_{L^2}\|\D u\|_{L^6}\dif t+C\vr^2\int_0^T\|\D u\|_{L^2}^2\dif t\nonumber\\
\leq~&CK_6\vr^2+C(K)\vr^{\frac{1}{2}}\nonumber\\
\leq~& CK_6\vr^2.
\end{align}

Inserting \eqref{3.5:6}--\eqref{3.5:10} and \eqref{estimate:D2theta:2} into \eqref{3.5:5}, after integration with respect to $t\in[0,T]$ and taking \eqref{assume:2}, \eqref{-4.1:2}, \eqref{estimate:DuL6:4}, \eqref{estimate:DuL6:5} into consideration, we have
\begin{align}\label{3.5:12}
&\sigma^6\int\rho|\dot{\theta}|^2\dif x+\kappa\int_0^T\int\sigma^6|\D\dot{\theta}|^2\dif x\dif t\nonumber\\
\leq~&6\int_0^{\sigma(T)}\sigma^3\|\sqrt{\rho}\dot{\theta}\|_{L^2}^2\dif t+C(K)\vr^{-\al}\int_0^T\sigma^3\|\D^2\theta\|_{L^2}^2\dif t+C(K)\vr^{1-\al}\int_0^T\sigma^3\|\sqrt{\rho}\dot{\theta}\|_{L^2}^2\dif t\nonumber\\
&+C(K)\vr^3\int_0^T\sigma^3\|\D\dot{u}\|_{L^2}^2\dif t+C(K)\vr^{2-\al}\int_0^T\|\D u\|_{L^2}\|\D u\|_{L^6}\dif t+C\vr^{\frac{1}{2}}\int_0^T\sigma^6\|\sqrt{\rho}\dot{\theta}\|_{L^2}\|\D\dot{u}\|_{L^2}\dif t\nonumber\\
&+C(K)\left(\vr^{\frac{1-\al}{2}}+\vr^{\frac{3}{2}}\right)\int_0^T\sigma^3\|\sqrt{\rho}\dot{\theta}\|_{L^2}\|\D u\|_{L^6}\dif t+CK_6\vr^\al\int_0^T\sigma^3\|\D\dot{u}\|_{L^2}^2\dif t\nonumber\\
\leq~&C_7K_6^2\vr^2+C(K)\vr\nonumber\\
\leq~&K_7\vr^2,
\end{align}
provided $\vr\geq\rL_{6,1}$, $\al>4$, and $K_7$, $\rL_{6,1}$ large enough, such that $C_7K_6^2\leq\frac{1}{2}K_7$ and $C(K)\vr\leq\frac{1}{2}K_7\vr^2$.

From \eqref{estimate:D2theta}, we have
\begin{align}\label{-4.7}
&\sup\limits_{t\in[\sigma(T),T]}\|\theta-1\|_{L^\infty}^2
\leq C\sup\limits_{t\in[\sigma(T),T]}\|\D\theta\|_{L^2}\|\D^2\theta\|_{L^2}\nonumber\\
\leq~& C\sup\limits_{t\in[\sigma(T),T]}\Big(\vr^{\frac{1}{2}}\|\D\theta\|_{L^2}\|\sqrt{\rho}\dot{\theta}\|_{L^2}+\vr^\al\|\D u\|_{L^2}^{\frac{1}{2}}\|\D\theta\|_{L^2}\|\D u\|_{L^6}^{\frac{3}{2}}+\vr\|\D u\|_{L^2}^{\frac{1}{2}}\|\D\theta\|_{L^2}^2\|\D u\|_{L^6}^{\frac{1}{2}}\nonumber\\
&+\vr\|\D u\|_{L^2}\|\D\theta\|_{L^2}\Big)\nonumber\\
\leq~&C_8\sqrt{K_6K_7} ~ \vr^2+C(K)\left(\vr^{\frac{3}{8}}+\vr^{2-\frac{\al}{2}}\right)\nonumber\\
\leq~&K_8\vr^2,
\end{align}
provided $\vr\geq\rL_{6,2}$, where we have taken $K_8$ and $\rL_{6,2}$ large enough, such that $C_8\sqrt{K_6K_7}\leq\frac{1}{2}K_8$ and $C(K)\left(\vr^{\frac{3}{8}}+\vr^{2-\frac{\al}{2}}\right)\leq\frac{1}{2}K_8\vr^2$. At last, taking $\rL_6=\max\{\rL_{6,1}, ~ \rL_{6,2}\}$, we have proved Lemma \ref{Lem 3.5}.
\end{proof}

\subsubsection{The first-order uniform estimates of density on $[0,T]$}\label{sec3.3}

Next, we consider the first-order uniform estimates of density on $[0,T]$, which is achieved by combining the estimates on $[0,\sigma(T)]$ and the time-weighted estimate on $[0,T]$. Then, with the estimates of $\|\D\rho\|_{L^2}$ and $\|\D\rho\|_{L^4}$ in hand, the uniform lower and upper bounds of density is obtained by an interpolation argument.

\begin{lemma}\label{Lem39}
Under the conditions of Proposition \ref{p4.1}, there exist positive constants $\rL_7$ and $K_9$ depending on $\mu$, $\lambda$, $\kappa$, and $R$, such that if $(\rho,u,\theta)$ is a strong solution of \eqref{FCNS:2}--\eqref{data and far} on $\R^3\times(0,T]$, it holds that
\begin{equation}\label{3.9:1}
A_7(T)=\sup_{t\in[0,T]}\|\D \rho\|_{L^2}^2+\vr^{1-\al}\int_0^T\|\D \rho\|_{L^2}^2\dif t\leq K_9\vr^2,
\end{equation}
\begin{equation}\label{3.9:2}
A_8(T)=\sup_{t\in[0,T]}\|\D \rho\|_{L^4}^2+\vr^{1-\al}\int_0^T\|\D \rho\|_{L^4}^2\dif t\leq\vr^{\frac{3}{2}},
\end{equation}
and
\begin{equation}\label{3.9:3}
\frac{2}{3}\vr\leq\rho(x,t)\leq\frac{4}{3}\vr,
\end{equation}
provided that $\vr\geq\rL_7$.
\end{lemma}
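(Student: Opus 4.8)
The plan is to obtain $L^p$ estimates for $\D\rho$ with $p\in\{2,4\}$ from the transport equation \eqref{rho6:eqn}, close them by a bootstrap/Gr\"onwall argument on $[0,T]$ split at $\sigma(T)$, and then read off \eqref{3.9:3} from Gagliardo--Nirenberg.

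First I would apply $\partial_k$ to \eqref{rho6:eqn}. Differentiating the pressure term gives $\partial_k\bigl(\tfrac{R\vr}{(2\mu+\lambda)(1-\al)}(\rho^{1-\al}-\vr^{1-\al})\bigr)=\tfrac{R\vr}{2\mu+\lambda}\rho^{-\al}\partial_k\rho$, and differentiating $\tfrac{R\vr\rho^{1-\al}(\theta-1)}{(2\mu+\lambda)(\al-1)}$ produces, among other things, $-\tfrac{R\vr}{2\mu+\lambda}\rho^{-\al}(\theta-1)\partial_k\rho$; carrying this last piece to the left, the dissipative coefficient becomes $\tfrac{R\vr}{2\mu+\lambda}\rho^{-\al}\theta$, which is nonnegative and, by \eqref{assume:3}, comparable to $\vr^{1-\al}$ away from the small-$\theta$ region. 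Expanding $\div((\rho-\vr)u)=(\rho-\vr)\div u+u\cdot\D\rho$, integrating by parts in the convective term, multiplying by $|\D\rho|^{p-2}\partial_k\rho$ and summing, I obtain schematically
\begin{equation*}
\frac{\dif}{\dif t}\|\D\rho\|_{L^p}^p+c\vr^{1-\al}\!\int\rho^{-\al}\theta\,|\D\rho|^p\dif x\le C\!\int|\D u|\,|\D\rho|^p\dif x+C\vr\bigl(\|\D^2u\|_{L^p}+\|\D G\|_{L^p}\bigr)\|\D\rho\|_{L^p}^{p-1}+C\vr^{2-\al}\|\D\theta\|_{L^p}\|\D\rho\|_{L^p}^{p-1},
\end{equation*}
where the $\int|\D u|\,|\D\rho|^p$ term collects the $\partial_ku\cdot\D\rho$ contribution and the $\int|\div u|\,|\D\rho|^p$ coming from convection and from $(\partial_k\rho)\div u$, while $\int|\rho-\vr|\,|\D^2u|\,|\D\rho|^{p-1}$ has been absorbed into $\vr\|\D^2u\|_{L^p}\|\D\rho\|_{L^p}^{p-1}$ using $\tfrac12\vr\le\rho\le\tfrac32\vr$.

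Next I would control $\|\D G\|_{L^p}$ and $\|\D^2u\|_{L^p}$ by \eqref{2.3:2}, the decomposition $-\Delta u=-\D\div u+\D\times\D\times u$ and $\mu\Delta(\D\times u)=\D\times H$, i.e.\ by $\vr^{-\al}\|\rho\dot u\|_{L^p}+\vr^{1-\al}\|\D\theta\|_{L^p}+\vr^{-1}\|\D u\cdot\D\rho\|_{L^p}$ (plus, for $\D^2u$, a harmless $\vr^{-\al}\|\D\rho(\theta-1)\|_{L^p}$), the $\|\D u\cdot\D\rho\|$-piece being handled exactly like the convective term. For the convective term one writes $\div u=(2\mu+\lambda)^{-1}G$ plus pressure terms for its trace part, and estimates $\int|\D u|\,|\D\rho|^p$ by H\"older with $\D u$ (resp.\ $G$, resp.\ $\theta-1$) in $L^6$ and $\D\rho$ in $L^{6p/5}$, interpolating $\|\D\rho\|_{L^{6p/5}}$ between $L^p$ and $L^4$ and using the bootstrap bounds $A_5(T)\le 2K_9\vr^2$, $A_6(T)\le 2\vr^{3/2}$; a weighted Young inequality then returns the $\|\D\rho\|_{L^p}^p$ part to the $\vr^{1-\al}$-dissipation, leaving only forcing. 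On $[0,\sigma(T)]$ I would plug in $A_1$--$A_4$, \eqref{estimate:DuL6:2}--\eqref{estimate:DuL6:3}, \eqref{DGL2:2}--\eqref{DGL2:3}, and use that $\al>4$ makes every forcing time-integral at most $O(\vr^2)$ while $\int_0^{\sigma(T)}\|\D u\|_{L^\infty}\dif t$ (hence the Gr\"onwall factor) is $o(1)$, since $\int_0^{\sigma(T)}\|H\|_{L^6}\dif t\lesssim\vr^{(1-\al)/2}$ by \eqref{2.3:2} and $A_3(\sigma(T))$; on $[\sigma(T),T]$ I would instead use $B_1$--$B_5$, \eqref{-4.1:2}, \eqref{assume:1.75}, \eqref{estimate:DuL6:4}--\eqref{estimate:DuL6:5}, where $\|\D u\|_{L^6},\|\D G\|_{L^2},\|H\|_{L^2}$ are all small negative powers of $\vr$, so the dissipation together with the bootstrapped bound $\vr^{1-\al}\int_0^T\|\D\rho\|_{L^p}^p\dif t\le 2K_9\vr^2$ (resp.\ $2\vr^{3/2}$) absorbs the convective and temperature contributions without leaving a Gr\"onwall coefficient. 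Taking $K_9,\rL_8$ large gives \eqref{3.9:1}--\eqref{3.9:2} (for the $L^4$ bound one checks the constant collapses to $1$ for $\vr\ge\rL_8$), and then the Gagliardo--Nirenberg inequality of the Lemma with $q=2$, $r=4$ gives $\|\rho-\vr\|_{L^\infty}\le C\|\rho-\vr\|_{L^2}^{1/7}\|\D\rho\|_{L^4}^{6/7}\le C\vr^{1/7}(\vr^{3/4})^{6/7}=C\vr^{11/14}\le\vr/3$ by \eqref{lem3.1:g2} and \eqref{3.9:2}.

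The main obstacle is the $L^4$ estimate: since $A_6(T)\le\vr^{3/2}$ carries no free constant, one must trace the $\vr$-powers in $\|\D^2u\|_{L^4}$ (which itself involves $\|\D u\cdot\D\rho\|_{L^4}$, hence $\|\D\rho\|_{L^4}$) and in the convective term extremely carefully; the inequality is genuinely self-referential and closes only because the bootstrap exponent $3/2$ is strictly below the exponent $2$ that the $\vr^{1-\al}$-dissipation can afford, and because the $\vr^{-1}$ gain in $H$ beats the $\vr^{\al-1}$ in $\rho^\al$. A closely related subtlety is uniformity in $T$ on $[\sigma(T),T]$: the convective term $\int_0^T\!\int|\D u|\,|\D\rho|^p$ must be swallowed by the dissipation (via the interpolation against the large-but-controlled bootstrap norms of $\D\rho$) rather than through an exponential Gr\"onwall factor, since $\int_0^T\|\D u\|_{L^\infty}\dif t$ is not uniformly bounded in $T$.
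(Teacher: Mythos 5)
Your overall strategy coincides with the paper's: differentiate the mass equation, use the modified effective viscous flux $G$ to convert $\rho\,\D\div u$ into $\D G$ plus the damping term $R\rho^{1-\al}\theta|\D\rho|^2\gtrsim\vr^{1-\al}|\D\rho|^2$ (modulo a $\|1-\theta\|_{L^\infty}$ correction), control $\|\D G\|_{L^p}$ and $\|\D^2u\|_{L^4}$ through the momentum equation, split the time interval at $\sigma(T)$ with Gr\"onwall on the first piece and absorption via the bootstrap bounds on the second, and finish \eqref{3.9:3} by Gagliardo--Nirenberg. The one sub-step that does not work as written is your treatment of the convective term for $p=4$: H\"older with $\D u\in L^6$ forces $\D\rho\in L^{6p/5}=L^{24/5}$, and $24/5>4$, so $\|\D\rho\|_{L^{24/5}}$ cannot be interpolated between the only two bootstrapped norms $\|\D\rho\|_{L^2}$ and $\|\D\rho\|_{L^4}$ (it would require $\|\D^2\rho\|$, which is not available at this stage). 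The paper instead keeps $\int|\D u|\,|\D\rho|^p\le\|\D u\|_{L^\infty}\|\D\rho\|_{L^p}^p$ with $\|\D u\|_{L^\infty}\le C\|\D u\|_{L^2}^{1/7}\|\D^2u\|_{L^4}^{6/7}$ on \emph{both} time intervals; on $[\sigma(T),T]$ the Gr\"onwall factor is avoided not by spatial interpolation of $\D\rho$ but by pulling out $\sup_t\|\D\rho\|_{L^r}$ and applying a generalized H\"older inequality in time against $\int_0^T\|\D u\|_{L^2}^2$, $\int_0^T\sigma^3\|\D^2u\|_{L^4}^2$ and the bootstrapped integral $\vr^{1-\al}\int_0^T\|\D\rho\|_{L^r}^2\,\dif t$. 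With that substitution your argument closes exactly as the paper's does; the remaining details (the $\vr$-power bookkeeping for $\|\D^2u\|_{L^4}$, which is self-referential through $\|\D\rho\|_{L^4}\|\D u\|_{L^\infty}$, and the final $\|\rho-\vr\|_{L^\infty}\le C\vr^{11/14}$ versus the paper's $C\vr^{5/6}$) are consistent with the paper.
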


\begin{proof}
Applying $\nabla$ operator on $(\ref{FCNS:2})_1$ and multiplying $\nabla\rho|\nabla \rho|^{r-2}$ on both sides of the resulting equation, we obtain
\begin{align}
\frac{1}{r}\left(|\D\rho|^r\right)_t&+\frac{1}{r}\div(|\D \rho|^{r}u)+\frac{r-1}{r}|\D \rho|^r\div u+|\D \rho|^{r-2}(\D\rho)^\top\D u\D \rho\nonumber\\
&+|\D \rho|^{r-2}\D \rho\cdot\left(\frac{\rho}{2\mu+\lambda}\D G+R\rho^{1-\al}\theta\D \rho+\frac{R}{(1-\al)}\rho^{2-\al}\D\theta\right)=0.
\end{align}
Integrating the above equation in $\R^3$, we obtain
\begin{align}\label{lr}
\frac{1}{r}\frac{\dif}{\dif t}\|\D\rho\|_{L^r}^{r}+R\vr^{1-\al}\int\theta|\D\rho|^{r}\dif x
\leq~& C\|\D u\|_{L^\infty}\|\D \rho\|_{L^r}^{r}+C\vr\|\D G\|_{L^r}\|\D\rho\|_{L^r}^{r-1}+C\vr^{2-\al}\|\D \theta\|_{L^r}\|\D\rho\|_{L^r}^{r-1}.
\end{align}

\textbf{Case 1}: When $r=2$, \eqref{lr} reduces to 
\begin{align}\label{l2}
&\frac{1}{2}\frac{\dif}{\dif t}\|\D\rho\|_{L^2}^2+R\vr^{1-\al}\int|\D\rho|^{2}\dif x\nonumber\\
\leq~& C\vr^{1-\al}\|1-\theta\|_{L^\infty}\|\D \rho\|_{L^2}^2+C\|\D u\|_{L^\infty}\|\D \rho\|_{L^2}^2+C\vr\|\D G\|_{L^2}\|\D\rho\|_{L^2}+C\vr^{1-\al}\|\D \theta\|_{L^2}\|\D\rho\|_{L^2}\nonumber\\
\leq~&\frac{R}{2}\vr^{1-\al}\|\D\rho\|_{L^2}^2+C\left(\vr^{1-\al}\|1-\theta\|_{L^\infty}+\|\D u\|_{L^\infty}\right)\|\D\rho\|_{L^2}^2+C\vr\|\D G\|_{L^2}\|\D\rho\|_{L^2}+C\vr^{1-\al}\|\D \theta\|_{L^2}^2\nonumber\\
\leq~&\frac{R}{2}\vr^{1-\al}\|\D\rho\|_{L^2}^2+C\left(\vr^{1-\al}\|\D\theta\|_{L^2}^{\frac{1}{2}}\|\D^2\theta\|_{L^2}^{\frac{1}{2}}+\|\D u\|_{L^2}^\frac{1}{7}\|\D^2 u\|_{L^4}^\frac{6}{7}\right)\|\D\rho\|_{L^2}^2+C\vr\|\D G\|_{L^2}\|\D\rho\|_{L^2}\nonumber\\
&+C\vr^{1-\al}\|\D \theta\|_{L^2}^2.
\end{align}
By \eqref{2.3:2}, we have
\begin{align}\label{f2}
\|\D G\|_{L^2}\leq C\|H\|_{L^2}\leq~&C\vr^{\frac{1}{2}-\al}\|\sqrt{\rho}\dot{u}\|_{L^2}+C\vr^{1-\al}\|\D\theta\|_{L^2}+C\vr^{-1}\|\D\rho\|_{L^2}\|\D u\|_{L^\infty}
\nonumber\\
\leq~&C\vr^{\frac{1}{2}-\al}\|\sqrt{\rho}\dot{u}\|_{L^2}+C\vr^{1-\al}\|\D\theta\|_{L^2}+C\vr^{-1}\|\D\rho\|_{L^2}\|\D u\|_{L^2}^{\frac{1}{7}}\|\D^2u\|_{L^4}^{\frac{6}{7}}.
\end{align}

Inserting \eqref{f2} into \eqref{l2}, we have
\begin{align}\label{l2-1}
&\frac{\dif}{\dif t}\|\D\rho\|_{L^2}^2+R\vr^{1-\al}\|\D\rho\|_{L^2}^2\nonumber\\
\leq~&C\left(\vr^{1-\al}\|\D\theta\|_{L^2}^{\frac{1}{2}}\|\D^2\theta\|_{L^2}^{\frac{1}{2}}+\|\D u\|_{L^2}^\frac{1}{7}\|\D^2 u\|_{L^{4}}^\frac{6}{7}\right)\|\D\rho\|_{L^2}^2+C\vr^{1-\al}\|\D \theta\|_{L^2}^2
\nonumber\\
&+C\vr\|\D\rho\|_{L^2}\left(\vr^{\frac{1}{2}-\al}\|\sqrt{\rho}\dot{u}\|_{L^{2}}
+\vr^{1-\al}\|\D\theta\|_{L^{2}}+\vr^{-1}\|\D\rho\|_{L^{2}}\|\D u\|_{L^2}^\frac{1}{7}\|\D^2 u\|_{L^{4}}^\frac{6}{7}\right)\nonumber\\
\leq~&\frac{R}{4}\vr^{1-\al}\|\D\rho\|_{L^2}^2+C\left(\vr^{1-\al}\|\D\theta\|_{L^2}^{\frac{1}{2}}\|\D^2\theta\|_{L^2}^{\frac{1}{2}}+\|\D u\|_{L^2}^\frac{1}{7}\|\D^2 u\|_{L^{4}}^\frac{6}{7}\right)\|\D\rho\|_{L^2}^2\nonumber\\
&+C\vr^{2-\al}\|\sqrt{\rho}\dot{u}\|_{L^{2}}^2+C\vr^{3-\al}\|\D\theta\|_{L^{2}}^2.
\end{align}
It remains to estimate $\|\D^2 u\|_{L^4}$  on the right hand side of \eqref{l2-1}. From $(\ref{FCNS:2})_2$, one has
\begin{align}\label{D2u4}
\|\D^2u\|_{L^4}\leq~&C\vr^{-\al}
\left(\vr\|\dot{u}\|_{L^4}+\vr^{\al-1}\|\D\rho\D u\|_{L^4}+\|\D\rho\|_{L^4}\|\theta\|_{L^\infty}+\vr\|\D\theta\|_{L^4}\right)
\nonumber\\
\leq~&C\vr^{-\al}
\left(\vr\|\dot{u}\|_{L^4}+\vr^{\al-1}\|\D\rho\|_{L^4}\|\D u\|_{L^\infty}+\|\D\rho\|_{L^4}\|\D\theta\|_{L^2}^{\frac{1}{2}}\|\D^2\theta\|_{L^2}^{\frac{1}{2}}+\vr\|\D\theta\|_{L^2}^{\frac{1}{4}}\|\D^2\theta\|_{L^2}^{\frac{3}{4}}\right)
\nonumber\\
\leq~&C\vr^{1-\al}\|\dot{u}\|_{L^4}+C\vr^{-1}\|\D\rho\|_{L^4}\|\D u\|_{L^2}^{\frac{1}{7}}
\|\D^2u\|_{L^4}^{\frac{6}{7}}
+C\vr^{-\al}\|\D\rho\|_{L^4}\|\D\theta\|_{L^2}^{\frac{1}{2}}\|\D^2\theta\|_{L^2}^{\frac{1}{2}}\nonumber\\
&+C\vr^{1-\al}\|\D\theta\|_{L^2}^{\frac{1}{4}}\|\D^2\theta\|_{L^2}^{\frac{3}{4}}
\nonumber\\
\leq~&C\vr^{1-\al}\|\dot{u}\|_{L^4}+C\vr^{-7}\|\D\rho\|_{L^4}^7\|\D u\|_{L^2}
+C\vr^{-\al}\|\D\rho\|_{L^4}\|\D\theta\|_{L^2}^{\frac{1}{2}}\|\D^2\theta\|_{L^2}^{\frac{1}{2}}\nonumber\\
&+C\vr^{1-\al}\|\D\theta\|_{L^2}^{\frac{1}{4}}\|\D^2\theta\|_{L^2}^{\frac{3}{4}}
+\frac{1}{4}\|\D^2u\|_{L^4},
\end{align}
which implies
\begin{align}\label{D2l4}
\|\D^2u\|_{L^4}
\leq C\vr^{1-\al}\|\dot{u}\|_{L^4}
+C\vr^{-7}\|\D\rho\|_{L^4}^7\|\D u\|_{L^2}+C\vr^{-\al}\|\D\rho\|_{L^4}\|\D\theta\|_{L^2}^{\frac{1}{2}}\|\D^2\theta\|_{L^2}^{\frac{1}{2}}+C\vr^{1-\al}\|\D\theta\|_{L^2}^{\frac{1}{4}}\|\D^2\theta\|_{L^2}^{\frac{3}{4}}.
\end{align}

Integrating \eqref{D2l4} with respect to $t$ over $[0,\sigma(T)]$,
we arrive at
\begin{align}\label{p0}
\int_0^{\sigma(T)}\|\D^2u\|_{L^4}^2\dif t
\leq~&C\vr^{2-2\al}\int_{0}^{\sigma(T)}\|\dot{u}\|_{L^4}^2\dif t+C\vr^{-14}\int_{0}^{\sigma(T)}\|\D\rho \|_{L^4}^{14}
\|\D u\|_{L^2}^{2}\dif t\nonumber\\
&
+C\vr^{-2\al}\int_0^{\sigma(T)}\|\D\rho\|^2_{L^4}\|\D\theta\|_{L^2}\|\D^2\theta\|_{L^2}\dif t+C\vr^{2-2\al}\int_0^{\sigma(T)}\|\D\theta\|_{L^2}^{\frac{1}{2}}\|\D^2\theta\|_{L^2}^{\frac{3}{2}}\dif t\nonumber\\
:=~&\sum\limits_{i=1}^{4}Q_i.
\end{align}
Next, we estimate $P_i$ on the right hand side of \eqref{p0}. Firstly, from \eqref{estimate:D2theta}, we have
\begin{align}\label{estimate:D2theta:3}
\int_0^{\sigma(T)}\|\D^2 \theta\|_{L^2}^2\dif t\leq~
&C\vr\int_0^{\sigma(T)}\|\sqrt{\rho}\dot{\theta}\|_{L^2}^2\dif t+C\vr^{\frac{3}{2}-\al}\int_0^{\sigma(T)}\|\D u\|_{L^2}\|\sqrt{\rho}\dot{u}\|_{L^2}^3\dif t\nonumber\\
&+C\vr^{3-\al}\int_0^{\sigma(T)}\|\D u\|_{L^2}\|\D\theta\|_{L^2}^3\dif t+C\vr^{-\al}\int_0^{\sigma(T)}\|\D u\|_{L^2}\|\D\rho\|_{L^2}^3\dif t\nonumber\\
&+C\vr^4\int_0^{\sigma(T)}\|\D u\|_{L^2}^4\|\D\theta \|_{L^2}^2\dif t\nonumber\\
\leq~&C\left(K_2+K_1\sqrt{K_1K_3}\right)\vr^{\al+1}+C(K)\left(\vr^{4-\frac{\al}{2}}+\vr^{\frac{1}{2}(7-3\al)}+\vr^5\right)\nonumber\\
\leq~&C(K)\vr^{\al+1}.
\end{align}
Then, by \eqref{assume:1}, \eqref{assume:3} and \eqref{estimate:D2theta:3}, we have
\begin{align}\label{p1}
Q_1=C\vr^{2-2\al}\int_0^{\sigma(T)}\|\dot{u}\|_{L^4}^2\dif t\leq C\vr^{\frac{7}{4}-2\al}\int_0^{\sigma(T)}
\|\sqrt{\rho}\dot{u}\|_{L^2}^{\frac{1}{2}}\|\D \dot{u}\|_{L^2}^{\frac{3}{2}}\dif t\leq C(K)\vr^{1-\al}.
\end{align}
\begin{align}\label{p3}
Q_2=C\vr^{-14}\int_{0}^{\sigma(T)}\|\D\rho \|_{L^4}^{14}\|\D u\|_{L^2}^{2}\dif t\leq C\vr^{-14}\sup\limits_{t\in[0,\sigma(T)]}\|\D \rho\|_{L^4}^{14}\int_0^{\sigma(T)}\|\D u\|_{L^2}^2\dif t\leq C(K)\vr^{-\frac{5}{2}-\al}.
\end{align}
\begin{align}\label{p4}
Q_3=~&C\vr^{-2\al}\int_0^{\sigma(T)}\|\D\rho\|^{2}_{L^4}\|\D\theta\|_{L^2}\|\D^2\theta\|_{L^2}\dif t
\nonumber\\ 
\leq~&C\vr^{-2\al}\sup\limits_{t\in[0,\sigma(T)]}\|\D\rho\|^{2}_{L^4}\int_0^{\sigma(T)}\|\D\theta\|_{L^2}\|\D^2\theta\|_{L^2}\dif t\nonumber\\
\leq~& C(K)\vr^{\frac{5-3\al}{2}},
\end{align}
\begin{align}\label{p5}
Q_4=C\vr^{2-2\al}\int_0^{\sigma(T)}\|\D\theta\|_{L^2}^{\frac{1}{2}}\|\D^2\theta\|_{L^2}^{\frac{3}{2}}\dif t\leq C(K)\vr^{3-\frac{5}{4}\al}.
\end{align}
Now substituting \eqref{p1}--\eqref{p5} into \eqref{p0}, one can see that
\begin{align}\label{D2u24}
\int_0^{\sigma(T)}\|\D^2u\|_{L^4}^2\dif t\leq C(K)\left(\vr^{1-\al}+\vr^{3-\frac{5}{4}\al}\right).
\end{align}

Next, we estimate $\|\D\rho\|_{L^2}$ in two cases.

\textbf{Case 1.1}: For the case  $t\in(0,\sigma(T))$, integrating \eqref{l2-1} over $(0,\sigma(T))$, we have
\begin{align}\label{l2-2}
&\|\D\rho\|_{L^2}^2+\frac{R}{4}\vr^{1-\al}\int_0^{\sigma(T)}\|\D\rho\|_{L^2}^2\dif t\nonumber\\
\leq~& \|\D\rho_0\|_{L^2}^2+C\int_0^{\sigma(T)}\left(\vr^{1-\al}\|\D\theta\|_{L^2}^{\frac{1}{2}}\|\D^2\theta\|_{L^2}^{\frac{1}{2}}+\|\D u\|_{L^2}^\frac{1}{7}\|\D^2 u\|_{L^{4}}^\frac{6}{7}\right)\|\D\rho\|_{L^2}^2\dif t\nonumber\\
&+C\vr^{2-\al}\int_0^{\sigma(T)}\|\sqrt{\rho}\dot{u}\|_{L^{2}}^2\dif t+C\vr^{3-\al}\int_0^{\sigma(T)}\|\D\theta\|_{L^{2}}^2\dif t
\nonumber\\
\leq~&\|\D\rho_0\|_{L^2}^2+CK_1\vr^{2}+C(K)\vr^{4-\al}\nonumber\\
&+C\int_0^{\sigma(T)}\left(\vr^{1-\al}\|\D\theta\|_{L^2}^{\frac{1}{2}}\|\D^2\theta\|_{L^2}^{\frac{1}{2}}+\|\D u\|_{L^2}^\frac{1}{7}\|\D^2 u\|_{L^{4}}^\frac{6}{7}\right)\|\D\rho\|_{L^2}^2\dif t,
\end{align}
where we have used \eqref{assume:1}.

By Gronwall's inequality, we get
\begin{align}\label{l2-2-1}
&\|\D\rho\|_{L^2}^2+\vr^{1-\al}\int_0^{\sigma(T)}\|\D\rho\|_{L^2}^2\dif t\nonumber\\
\leq~&\left(C\|\D\rho_0\|_{L^2}^2+CK_1\vr^{2}+C(K)\vr^{4-\al}\right)\exp\Bigg\{\vr^{1-\al}\left(\int_0^{\sigma(T)}\|\D\theta\|_{L^2}^2\dif t\right)^\frac{1}{4}\left(\int_0^{\sigma(T)}\|\D^2\theta\|_{L^2}^2\dif t\right)^\frac{1}{4}\nonumber\\
&+\left(\int_0^{\sigma(T)}\|\D u\|_{L^2}^2\dif t\right)^\frac{1}{14}\left(\int_0^{\sigma(T)}\|\D^2 u\|_{L^4}^2\dif t\right)^\frac{3}{7}\Bigg\}\cdot\nonumber\\
\leq~&2C_9K_1\vr^2\exp\left\{C(K)\left(\vr^{\frac{3}{4}(2-\al)}+\vr^{\frac{1-\al}{2}}+\vr^{\frac{19}{14}-\frac{17}{28}\alpha}\right)\right\}\nonumber\\
\leq~&4C_9K_1\vr^2,
\end{align}
provided $\vr\geq\rL_{7,1}$, where we have used \eqref{compatibility condition}, \eqref{assume:1}, \eqref{estimate:D2theta:3}, \eqref{D2u24}, and taken $\rL_{7,1}$ large enough such that $C(K)\left(\vr^{\frac{3}{4}(2-\al)}+\vr^{\frac{1-\al}{2}}+\vr^{\frac{19}{14}-\frac{17}{28}\alpha}\right)\leq\ln 2$.

\bigskip

\textbf{Case 1.2}: For the time-weighted estimates of $\|\D\rho\|_{L^2}$ on the time interval $(0,T)$, multiplying \eqref{lr} by $\sigma^m$ with some constant $m\in\mathbb{N}_+$, we have
\begin{align}\label{lr:sigma}
&\frac{1}{r}\frac{\dif}{\dif t}\left(\sigma^m\|\D\rho\|_{L^r}^{r}\right)+R\sigma^m\vr^{1-\al}\int\theta|\D\rho|^{r}\dif x\nonumber\\
\leq~&m\sigma^{m-1}\sigma'\|\D\rho\|_{L^r}^r+ C\sigma^m\|\D u\|_{L^\infty}\|\D \rho\|_{L^r}^{r}+C\vr\sigma^m\|\D G\|_{L^r}\|\D\rho\|_{L^r}^{r-1}+C\vr^{2-\al}\sigma^m\|\D \theta\|_{L^r}\|\D\rho\|_{L^r}^{r-1}.
\end{align}

By \eqref{D2l4}, \eqref{assume:2}, and \eqref{assume:3}, we have
\begin{align}\label{D2u24:sigma}
&\int_0^T\sigma^3\|\D^2u\|_{L^4}^2\dif t\nonumber\\
\leq~&C\vr^{\frac{7}{4}-2\al}\left(\int_0^T\sigma^2\|\sqrt{\rho}\dot{u}\|_{L^2}^2\dif t\right)^{\frac{1}{4}}\left(\int_0^T\sigma^3\|\D\dot{u}\|_{L^2}^2\dif t\right)^{\frac{3}{4}}+C\vr^{-14}\sup\limits_{t\in[0,T]}\|\D\rho\|_{L^4}^{14}\int_0^T\|\D u\|_{L^2}^2\dif t\nonumber\\
&+C\vr^{-2\al}\sup\limits_{t\in[0,T]}\|\D\rho\|_{L^4}^2\left(\int_0^T\|\D\theta\|_{L^2}^2\dif t\right)^{\frac{1}{2}}\left(\int_0^T\sigma^3\|\D^2\theta\|_{L^2}^2\dif t\right)^{\frac{1}{2}}\nonumber\\
&+C\vr^{2-2\al}\left(\int_0^T\|\D\theta\|_{L^2}^2\dif t\right)^{\frac{1}{4}}\left(\int_0^T\sigma^3\|\D^2\theta\|_{L^2}^2\dif t\right)^{\frac{3}{4}}\nonumber\\
\leq~&C(K)\left(\vr^{-\frac{3}{2}-\al}+\vr^{4-2\al}\right).
\end{align}

When $r=2$, \eqref{lr:sigma} reduces to 
\begin{align}\label{Dr2:1}
&\frac{1}{2}\frac{\dif}{\dif t}\left(\sigma^m\|\D\rho\|_{L^2}^2\right)+R\sigma^m\vr^{1-\al}\|\D\rho\|_{L^2}^2\nonumber\\
\leq~&m\sigma^{m-1}\sigma'\|\D\rho\|_{L^2}^2+C\vr^{1-\al}\sigma^m\|\theta-1\|_{L^\infty}\|\D\rho\|_{L^2}^2+C\sigma^m\|\D u\|_{L^\infty}\|\D \rho\|_{L^2}^2\nonumber\\
&+C\vr\sigma^m\|\D G\|_{L^2}\|\D\rho\|_{L^2}+C\vr^{2-\al}\sigma^m\|\D\theta\|_{L^2}\|\D\rho\|_{L^2}\nonumber\\
\leq~&m\sigma^{m-1}\sigma'\|\D\rho\|_{L^2}^2+C\vr^{1-\al}\sigma^m\|\D\theta\|_{L^2}^{\frac{1}{2}}\|\D^2\theta\|_{L^2}^{\frac{1}{2}}\|\D\rho\|_{L^2}^2+C\sigma^m\|\D u\|_{L^2}^{\frac{1}{7}}\|\D^2u\|_{L^4}^{\frac{6}{7}}\|\D\rho\|_{L^2}^2\nonumber\\
&+C\vr\sigma^m\|\D G\|_{L^2}\|\D\rho\|_{L^2}+C\vr^{2-\al}\sigma^m\|\D\theta\|_{L^2}\|\D\rho\|_{L^2}\nonumber\\
\leq~&m\sigma^{m-1}\sigma'\|\D\rho\|_{L^2}^2+C\vr^{1-\al}\sigma^m\|\D\theta\|_{L^2}^{\frac{1}{2}}\|\D^2\theta\|_{L^2}^{\frac{1}{2}}\|\D\rho\|_{L^2}^2+C\sigma^m\|\D u\|_{L^2}^{\frac{1}{7}}\|\D^2u\|_{L^4}^{\frac{6}{7}}\|\D\rho\|_{L^2}^2\nonumber\\
&+C\vr^{\frac{3}{2}-\al}\sigma^m\|\sqrt{\rho}\dot{u}\|_{L^2}\|\D\rho\|_{L^2}+C\vr^{2-\al}\sigma^m\|\D\theta\|_{L^2}\|\D\rho\|_{L^2},
\end{align}
where we have used \eqref{f2}.

Taking $m=2$ and integrating \eqref{Dr2:1} over $[0,T]$, we obtain
\begin{align}\label{Dr2:2}
&\sigma^2\|\D\rho\|_{L^2}^2+\vr^{1-\al}\int_0^T\sigma^2\|\D\rho\|_{L^2}^2\dif t\nonumber\\
\leq~&C\sup\limits_{t\in[0,\sigma(T)]}\|\D\rho\|_{L^2}^2+C\vr^{1-\al}\sup\limits_{t\in[0,T]}\|\D\rho\|_{L^2}\left(\int_0^T\|\D\theta\|_{L^2}^2\dif t\right)^{\frac{1}{4}}\left(\int_0^T\sigma^3\|\D^2\theta\|_{L^2}^2\dif t\right)^{\frac{1}{4}}\left(\int_0^T\|\D\rho\|_{L^2}^2\dif t\right)^{\frac{1}{2}}\nonumber\\
&+C\sup\limits_{t\in[0,T]}\|\D\rho\|_{L^2}\left(\int_0^T\|\D u\|_{L^2}^2\dif t\right)^{\frac{1}{14}}\left(\int_0^T\sigma^3\|\D^2u\|_{L^4}^2\dif t\right)^{\frac{3}{7}}\left(\int_0^T\|\D\rho\|_{L^2}^2\dif t\right)^{\frac{1}{2}}\nonumber\\
&+C\vr^{\frac{3}{2}-\al}\left(\int_0^T\sigma^2\|\sqrt{\rho}\dot{u}\|_{L^2}^2\dif t\right)^{\frac{1}{2}}\left(\int_0^T\|\D\rho\|_{L^2}^2\dif t\right)^{\frac{1}{2}}+C\vr^{2-\al}\int_0^T\|\D\theta\|_{L^2}\|\D\rho\|_{L^2}\dif t\nonumber\\
\leq~&C_{10}K_1\vr^2+C(K)\left(\vr+\vr^{\frac{47-6\al}{14}}+\vr^{4-\frac{\al}{2}}\right)\nonumber\\
\leq~&K_9\vr^2,
\end{align}
provided $\vr\geq\rL_{7,2}$, where we have used \eqref{l2-2-1}, \eqref{assume:2}, \eqref{assume:3}, \eqref{estimate:D2theta:2}, \eqref{D2u24:sigma}, and taken $K_9$, $\rL_{7,2}$ large enough such that $4C_9K_1+C_{10}K_1\leq\frac{1}{2}K_9$ and $C(K)\left(\vr+\vr^{\frac{47-6\al}{14}}+\vr^{4-\frac{\al}{2}}\right)\leq\frac{1}{2}K_9\vr^2$. Then, combining \eqref{l2-2-1} and \eqref{Dr2:2}, and taking $\vr\geq\max\{\rL_{7,1}, \rL_{7,2}\}$, we have proved \eqref{3.9:1}.

\bigskip

\textbf{Case 2}: When $r=4$, \eqref{lr} reduces to 
\begin{align}\label{l4}
&\frac{\dif}{\dif t}\|\D\rho\|_{L^4}^2+\frac{R}{2}\vr^{1-\al}\|\D\rho\|_{L^4}^2\nonumber\\
\leq~&C\left(\vr^{1-\al}\|1-\theta\|_{L^\infty}+\|\D u\|_{L^\infty}\right)\|\D \rho\|_{L^4}^2+C\vr\|\D G\|_{L^4}\|\D\rho\|_{L^4}+C\vr^{1-\al}\|\D\theta\|_{L^4}\|\D\rho\|_{L^4}.
\end{align}
By \eqref{2.3:2}, we have
\begin{align}\label{DG4}
\|\D G\|_{L^4}^2\leq~& C\vr^{-2\al}\left(\|\rho\dot{u}\|_{L^4}^2+\vr^2\|\D\theta\|_{L^4}^2+\vr^{2\al-2}\|\D u\|_{L^\infty}^2\|\D\rho\|_{L^4}^2\right)\nonumber\\
\leq~&C\vr^{\frac{5}{4}-2\al}\|\sqrt{\rho}\dot{u}\|_{L^2}^{\frac{3}{2}}\|\D\dot{u}\|_{L^2}^{\frac{1}{2}}+C\vr^{2-2\al}\|\D\theta\|_{L^4}^2+C\vr^{-2}\|\D u\|_{L^\infty}^2\|\D\rho\|_{L^4}^2.
\end{align}
Substituting \eqref{DG4} into \eqref{l4}, we obtain
\begin{align}\label{l4:2}
&\frac{\dif}{\dif t}\|\D\rho\|_{L^4}^2+\frac{R}{2}\vr^{1-\al}\|\D\rho\|_{L^4}^2\nonumber\\
\leq~&C\left(\vr^{1-\al}\|1-\theta\|_{L^\infty}+\|\D u\|_{L^\infty}\right)\|\D \rho\|_{L^4}^2+C\vr^{\frac{13}{8}-\al}\|\sqrt{\rho}\dot{u}\|_{L^2}^{\frac{3}{4}}\|\D\dot{u}\|_{L^2}^{\frac{1}{4}}\|\D\rho\|_{L^4}+C\vr^{2-\al}\|\D\theta\|_{L^4}\|\D\rho\|_{L^4}\nonumber\\
\leq~&C\left(\vr^{1-\al}\|\D\theta\|_{L^2}^{\frac{1}{2}}\|\D^2\theta\|_{L^2}^{\frac{1}{2}}+\|\D u\|_{L^2}^\frac{1}{7}\|\D^2 u\|_{L^4}^\frac{6}{7}\right)\|\D \rho\|_{L^4}^2+C\vr^{\frac{13}{8}-\al}\|\sqrt{\rho}\dot{u}\|_{L^2}^{\frac{3}{4}}\|\D\dot{u}\|_{L^2}^{\frac{1}{4}}\|\D\rho\|_{L^4}\nonumber\\
&+C\vr^{2-\al}\|\D\theta\|_{L^2}^{\frac{1}{4}}\|\D^2\theta\|_{L^2}^{\frac{3}{4}}\|\D\rho\|_{L^4}.
\end{align}

Then, similar as the estimates of  $\|\D\rho\|_{L^2}$, we divide the proof of \eqref{3.9:2} into two cases.

\textbf{Case 2.1}: For the case  $t\in(0,\sigma(T))$, integrating \eqref{l4:2} over $(0,\sigma(T))$, we have
\begin{align}\label{l4:3}
&\|\D\rho\|_{L^4}^2+\frac{R}{2}\vr^{1-\al}\int_0^{\sigma(T)}\|\D\rho\|_{L^4}^2\dif t\nonumber\\
\leq~& \|\D\rho_0\|_{4}^2+C\int_0^{\sigma(T)}\left(\vr^{1-\al}\|\D\theta\|_{L^2}^{\frac{1}{2}}\|\D^2\theta\|_{L^2}^{\frac{1}{2}}+\|\D u\|_{L^2}^\frac{1}{7}\|\D^2 u\|_{L^4}^\frac{6}{7}\right)\|\D\rho\|_{L^2}^2\dif t\nonumber\\
&+C\vr^{\frac{13}{8}-\al}\int_0^{\sigma(T)}\|\sqrt{\rho}\dot{u}\|_{L^2}^{\frac{3}{4}}\|\D\dot{u}\|_{L^2}^{\frac{1}{4}}\|\D\rho\|_{L^4}\dif t+C\vr^{2-\al}\int_0^{\sigma(T)}\|\D\theta\|_{L^2}^{\frac{1}{4}}\|\D^2\theta\|_{L^2}^{\frac{3}{4}}\|\D\rho\|_{L^4}\dif t\nonumber\\
\leq~&\|\D\rho_0\|_{4}^2+C\vr^{\frac{13}{8}-\al}\sup\limits_{t\in[0,\sigma(T)]}\|\D\rho\|_{L^4}\left(\int_0^{\sigma(T)}\|\sqrt{\rho}\dot{u}\|_{L^2}^2\dif t\right)^{\frac{3}{8}}\left(\int_0^{\sigma(T)}\|\D\dot{u}\|_{L^2}^2\dif t\right)^{\frac{1}{8}}\nonumber\\
&+C\vr^{2-\al}\sup\limits_{t\in[0,\sigma(T)]}\|\D\rho\|_{L^4}\left(\int_0^{\sigma(T)}\|\D\theta\|_{L^2}^2\dif t\right)^{\frac{1}{8}}\left(\int_0^{\sigma(T)}\|\D^2\theta\|_{L^2}^2\dif t\right)^{\frac{3}{8}}\nonumber\\
&+C\int_0^{\sigma(T)}\left(\vr^{1-\al}\|\D\theta\|_{L^2}^{\frac{1}{2}}\|\D^2\theta\|_{L^2}^{\frac{1}{2}}+\|\D u\|_{L^2}^\frac{1}{7}\|\D^2 u\|_{L^4}^\frac{6}{7}\right)\|\D\rho\|_{L^2}^2\dif t\nonumber\\
\leq~&\|\D\rho_0\|_{L^4}^2+C(K)\left(\vr^{\frac{9}{4}-\frac{\al}{2}}+\vr^{\frac{13}{4}-\frac{5}{8}\al}\right)+C\int_0^{\sigma(T)}\left(\vr^{1-\al}\|\D\theta\|_{L^2}^{\frac{1}{2}}\|\D^2\theta\|_{L^2}^{\frac{1}{2}}+\|\D u\|_{L^2}^\frac{1}{7}\|\D^2 u\|_{L^4}^\frac{6}{7}\right)\|\D\rho\|_{L^2}^2\dif t,
\end{align}
where we have used \eqref{assume:1} and \eqref{estimate:D2theta:3}.

By Gronwall's inequality, we obtain
\begin{align}\label{l4:4}
&\|\D\rho\|_{L^4}^2+\vr^{1-\al}\int_0^{\sigma(T)}\|\D\rho\|_{L^4}^2\dif t\nonumber\\
\leq~&C(K)\left(\|\D\rho_0\|_{L^4}^2+\vr^{\frac{9}{4}-\frac{\al}{2}}+\vr^{\frac{13}{4}-\frac{5}{8}\al}\right)\exp\Bigg\{\vr^{1-\al}\left(\int_0^{\sigma(T)}\|\D\theta\|_{L^2}^2\dif t\right)^\frac{1}{4}\left(\int_0^{\sigma(T)}\|\D^2\theta\|_{L^2}^2\dif t\right)^\frac{1}{4}\nonumber\\
&+\left(\int_0^{\sigma(T)}\|\D u\|_{L^2}^2\dif t\right)^\frac{1}{14}\left(\int_0^{\sigma(T)}\|\D^2 u\|_{L^4}^2\dif t\right)^\frac{3}{7}\Bigg\}\cdot\nonumber\\
\leq~&C(K)\left(\vr^{\frac{9}{4}-\frac{\al}{2}}+\vr^{\frac{13}{4}-\frac{5}{8}\al}\right)\exp\left\{C(K)\left(\vr^{\frac{3}{4}(2-\al)}+\vr^{\frac{1-\al}{2}}+\vr^{\frac{19}{14}-\frac{17}{28}\alpha}\right)\right\}\nonumber\\
\leq~&\vr,
\end{align}
provided $\vr\geq\rL_{7,3}$, where we have used \eqref{compatibility condition}, \eqref{assume:1}, \eqref{estimate:D2theta:3}, \eqref{D2u24}, and taken $\rL_{7,3}$ large enough such that $C(K)\left(\vr^{\frac{9}{4}-\frac{\al}{2}}+\vr^{\frac{13}{4}-\frac{5}{8}\al}\right)\leq\frac{\vr}{2}$ and $C(K)\left(\vr^{\frac{3}{4}(2-\al)}+\vr^{\frac{1-\al}{2}}+\vr^{\frac{19}{14}-\frac{17}{28}\alpha}\right)\leq\ln 2$.

\bigskip

\textbf{Case 2.2}: For the time-weighted estimates of $\|\D\rho\|_{L^4}$ on the time interval $(0,T)$, multiplying \eqref{l4} by $\sigma^m$ with some constant $m\in\mathbb{N}_+$, we have
\begin{align}\label{l4:sigma}
&\frac{\dif}{\dif t}\left(\sigma^m\|\D\rho\|_{L^4}^2\right)+\frac{R\sigma^m}{2}\vr^{1-\al}\|\D\rho\|_{L^4}^2\nonumber\\
\leq~&m\sigma^{m-1}\sigma'\|\D\rho\|_{L^4}^2+C\vr^{1-\al}\sigma^m\|1-\theta\|_{L^\infty}\|\D\rho\|_{L^4}^2+C\sigma^m\|\D u\|_{L^\infty}\|\D\rho\|_{L^4}^2\nonumber\\
&+C\vr\sigma^m\|\D G\|_{L^4}\|\D\rho\|_{L^4}+C\vr^{1-\al}\sigma^m\|\D\theta\|_{L^4}\|\D\rho\|_{L^4}\nonumber\\
\leq~&m\sigma^{m-1}\sigma'\|\D\rho\|_{L^4}^2+C\vr^{1-\al}\sigma^m\|\D\theta\|_{L^2}^{\frac{1}{2}}\|\D^2\theta\|_{L^2}^{\frac{1}{2}}\|\D\rho\|_{L^4}^2+C\sigma^m\|\D u\|_{L^2}^{\frac{1}{7}}\|\D^2 u\|_{L^4}^{\frac{6}{7}}\|\D\rho\|_{L^4}^2\nonumber\\
&+C\vr^{\frac{13}{8}-\al}\sigma^m\|\sqrt{\rho}\dot{u}\|_{L^2}^{\frac{3}{4}}\|\D\dot{u}\|_{L^2}^{\frac{1}{4}}\|\D\rho\|_{L^4}+C\vr^{2-\al}\sigma^m\|\D\theta\|_{L^2}^{\frac{1}{4}}\|\D^2\theta\|_{L^2}^{\frac{3}{4}}\|\D\rho\|_{L^4},
\end{align}
where we have used \eqref{DG4}.

Taking $m=2$ and integrating \eqref{l4:sigma} over $[0,T]$, we obtain
\begin{align}\label{Dr4:2}
&\sigma^2\|\D\rho\|_{L^4}^2+\vr^{1-\al}\int_0^T\sigma^2\|\D\rho\|_{L^4}^2\dif t\nonumber\\
\leq~&C\sup\limits_{t\in[0,\sigma(T)]}\|\D\rho\|_{L^4}^2+C\vr^{1-\al}\sup\limits_{t\in[0,T]}\|\D\rho\|_{L^4}\left(\int_0^T\|\D\theta\|_{L^2}^2\dif t\right)^{\frac{1}{4}}\left(\int_0^T\sigma^3\|\D^2\theta\|_{L^2}^2\dif t\right)^{\frac{1}{4}}\left(\int_0^T\|\D\rho\|_{L^4}^2\dif t\right)^{\frac{1}{2}}\nonumber\\
&+C\sup\limits_{t\in[0,T]}\|\D\rho\|_{L^4}\left(\int_0^T\|\D u\|_{L^2}^2\dif t\right)^{\frac{1}{14}}\left(\int_0^T\sigma^3\|\D^2u\|_4^2\dif t\right)^{\frac{3}{7}}\left(\int_0^T\|\D\rho\|_{L^4}^2\dif t\right)^{\frac{1}{2}}\nonumber\\
&+C\vr^{\frac{13}{8}-\al}\left(\int_0^T\sigma^2\|\sqrt{\rho}\dot{u}\|_{L^2}^2\dif t\right)^{\frac{3}{8}}\left(\int_0^T\sigma^3\|\D\dot{u}\|_{L^2}^2\dif t\right)^{\frac{1}{8}}\left(\int_0^T\|\D\rho\|_{L^4}^2\dif t\right)^{\frac{1}{2}}\nonumber\\
&+C\vr^{2-\al}\left(\int_0^T\|\D\theta\|_{L^2}^2\dif t\right)^{\frac{1}{8}}\left(\int_0^T\sigma^3\|\D^2\theta\|_{L^2}^2\dif t\right)^{\frac{3}{8}}\left(\int_0^T\|\D\rho\|_{L^4}^2\dif t\right)^{\frac{1}{2}}\nonumber\\
\leq~&C_{11}\vr+C(K)\left(\vr^{\frac{1}{2}}+\vr^{\frac{20-3\al}{7}}+\vr^{\frac{27-4\al}{8}}\right)\nonumber\\
\leq~&\vr^{\frac{3}{2}},
\end{align}
provided $\vr\geq\rL_{7,4}$, where we have used \eqref{l4:4}, \eqref{assume:2}, \eqref{assume:3}, \eqref{estimate:D2theta:2}, \eqref{D2u24:sigma}, and taken $\rL_{7,4}$ large enough such that $C_{11}\vr\leq\frac{1}{2}\vr^{\frac{3}{2}}$ and $C(K)\left(\vr^{\frac{1}{2}}+\vr^{\frac{20-3\al}{7}}+\vr^{\frac{27-4\al}{8}}\right)\leq\frac{1}{2}\vr^{\frac{3}{2}}$. Then, combining \eqref{l4:4} and \eqref{Dr4:2}, and taking $\vr\geq\max\{\rL_{7,3}, \rL_{7,4}\}$, we have proved \eqref{3.9:2}.

At last, we take $\vr\geq\rL_7:=\max\{\rL_{7,1}, \rL_{7,2}, \rL_{7,3}, \rL_{7,4}\}$. From \eqref{3.9:1} and \eqref{3.9:2}, we have
\begin{align}
\|\rho-\vr\|_{L^\infty}\leq C\|\D\rho\|_{L^2}^{\frac{1}{3}}\|\D\rho\|_{L^4}^{\frac{2}{3}}\leq C(K)\vr^{\frac{5}{6}}\leq\frac{\vr}{3},
\end{align}
which yields the uniform lower and upper bounds of the density, hence \eqref{3.9:3} is proved.
\end{proof}

Now we are in a position to give a proof of Proposition \ref{p4.1}.
\begin{proof}
If we take $\rL=\max\{\rL_i\}_{i=1}^7$ with $\rL_i ~ (i=1,2,\cdots,7)$ defined in Lemmas \ref{Lem33}--\ref{Lem39}, then Proposition \ref{p4.1} can be derived from the above lemmas.
\end{proof}

\subsection{Higher-order uniform estimates of $(\rho,u,\theta)$ on $[0,T]$}\label{sec3.4}
Based on Subsection \ref{sub:3.1}, in this subsection we give the uniform bounds of the strong solution in $H^2$ Sobolev space, and in the following we will use $C$ to denote an arbitrary positive constant that may depend on $\mu$, $\lambda$, $\kappa$, $R$, $C_0$, and $K_i$, but is independent of $T$ and $\vr$.
\begin{lemma}\label{Lem:10}
Under the conditions of Proposition \ref{p4.1}, it holds that
\begin{equation}\label{sec3.4:1}
\sup_{t\in[0,T]}\left(\|u\|_{H^2}^2+\|\theta-1\|_{H^2}^2\right)+\int_0^T\left(\|u_t\|_{H^1}^2+\|\theta_t\|_{H^1}^2\right)\dif t\leq C\vr^{3\al},
\end{equation}
\begin{equation}\label{sec3.4:1.2}
\sup_{t\in[0,T]}\left(\|\rho_t\|_{L^2}^2+\|\rho-\vr\|_{H^1}^2\right)\leq C\vr^{2},
\end{equation}
\begin{equation}\label{sec3.4:1.3}
\sup_{t\in[0,T]}\left(\|\D\rho_t\|_{L^2}^2+\|\D^2\rho\|_{L^2}^2\right)\leq C\vr^{4}\exp\left\{C\vr^{\frac{1}{7}}+C\vr^{\frac{13}{7}-\frac{3}{14}\al}\right\}.
\end{equation}
\end{lemma}
\begin{proof}
Firstly, the following estimates come from Proposition \ref{p4.1} and Lemma \ref{Lem3.1} directly:
\begin{align}\label{342}
\|\rho-\vr\|_{H^1}^2+\|u\|_{H^1}^2+\|\theta-1\|_{H^1}^2\leq C\vr^\al.
\end{align}
It remains to estimate
$$\|\rho_t\|_{H^1}^2+\|\D^2\rho\|_{L^2}^2+\|\D^2u\|_{L^2}^2+\|\D^2\theta\|_{L^2}^2+\int_0^T\left(\|u_t\|_{H^1}^2+\|\theta_t\|_{H^1}^2\right)\dif t.$$

We divide the proof into several steps.

\textbf{Step 1}: Estimate of $\|\D^2u\|_{L^2}^2$ and $\|\D^2\theta\|_{L^2}^2$.

Making inner product between \eqref{3.5:3} and $\dot{\theta}$, and noticing \eqref{3.5:4}, we obtain
\begin{align}\label{343}
&\frac{1}{2}\frac{\dif}{\dif t}\int\rho|\dot{\theta}|^2\dif x+\kappa\int|\D\dot{\theta}|^2\dif x\nonumber\\
\leq~&C\int|\D\dot{\theta}||\D u||\D\theta|\dif x+C\int\rho|\dot{\theta}|^2|\D u|\dif x+C\int\rho|\theta-1||\dot{\theta}|\left(|\D\dot{u}|+|\D u|^2\right)\dif x\nonumber\\
&+C\int\rho|\dot{\theta}|\left(|\D\dot{u}|+|\D u|^2\right)\dif x+C\int\rho^\al|\dot{\theta}||\D u|\left(|\D\dot{u}|+|\D u|^2\right)\dif x\nonumber\\
\leq~&\frac{\kappa}{2}\|\D\dot{\theta}\|_{L^2}^2+C\|\D u\|_{L^2}\|\D u\|_{L^6}\|\D^2\theta\|_{L^2}^2+C\vr\|\D u\|_{L^2}\|\D u\|_{L^6}\|\sqrt{\rho}\dot{\theta}\|_{L^2}^2\nonumber\\
&+C\vr^2\left(1+\|\D\theta\|_{L^2}\right)\|\D\theta\|_{L^2}\|\D\dot{u}\|_{L^2}^2+C\vr^2\left(1+\|\D\theta\|_{L^2}\right)\|\D\theta\|_{L^2}\|\D u\|_{L^2}\|\D u\|_{L^6}^3\nonumber\\
&+C\vr^{\frac{1}{2}}\|\sqrt{\rho}\dot{\theta}\|_{L^2}\|\D\dot{u}\|_{L^2} +C\vr^{\frac{1}{2}}\|\sqrt{\rho}\dot{\theta}\|_{L^2}\|\D u\|_{L^2}^{\frac{1}{2}}\|\D u\|_{L^6}^{\frac{3}{2}}\nonumber\\
&+C\vr^{2\al}\|\D u\|_{L^2}\|\D u\|_{L^2}\|\D\dot{u}\|_{L^2}^2+C\vr^{\al-\frac{1}{2}}\|\sqrt{\rho}\dot{\theta}\|_{L^2}\|\D u\|_{L^6}^3,
\end{align}
where we have used \eqref{lem3.1:g3}.

We obtain after integration with respect to $t\in[0,T]$ that
\begin{align}\label{344}
&\int\rho|\dot{\theta}|^2\dif x+\kappa\int_0^T\int|\D\dot{\theta}|^2\dif x\dif t\nonumber\\
\leq~&C\sup\limits_{t\in[0,T]}\left(\|\D u\|_{L^2}\|\D u\|_{L^6}\right)\int_0^T\|\D^2\theta\|_{L^2}^2\dif t+C\vr\sup\limits_{t\in[0,T]}\left(\|\D u\|_{L^2}\|\D u\|_{L^6}\right)\int_0^T\|\sqrt{\rho}\dot{\theta}\|_{L^2}^2\dif t\nonumber\\
&+C\vr^2\Big(1+\sup\limits_{t\in[0,T]}\|\D\theta\|_{L^2}^2\Big)\int_0^T\|\D\dot{u}\|_{L^2}^2\dif t\nonumber\\
&+C\vr^2\Big(1+\sup\limits_{t\in[0,T]}\|\D\theta\|_{L^2}^2\Big)\sup\limits_{t\in[0,T]}\|\D u\|_{L^6}^2\int_0^T\|\D u\|_{L^2}\|\D u\|_{L^6}\dif t+C\vr^{\frac{1}{2}}\int_0^T\|\sqrt{\rho}\dot{\theta}\|_{L^2}\|\D\dot{u}\|_{L^2}\dif t\nonumber\\
&+C\vr^{\frac{1}{2}}\sup\limits_{t\in[0,T]}\|\D u\|_{L^6}\int_0^T\|\sqrt{\rho}\dot{\theta}\|_{L^2}\|\D u\|_{L^2}^{\frac{1}{2}}\|\D u\|_{L^6}^{\frac{1}{2}}\dif t+C\vr^{2\al}\sup\limits_{t\in[0,T]}\left(\|\D u\|_{L^2}\|\D u\|_{L^6}\right)\int_0^T\|\D\dot{u}\|_{L^2}^2\dif t\nonumber\\
&+C\vr^{\al-\frac{1}{2}}\sup\limits_{t\in[0,T]}\|\D u\|_{L^6}^2\int_0^T\|\sqrt{\rho}\dot{\theta}\|_{L^2}\|\D u\|_{L^6}\dif t\nonumber\\
\leq~&C\left(\vr^{\al+1}+\vr^{2\al+1}+\vr^{\frac{7}{2}}+\vr^{\al}+\vr^{\frac{7}{4}}+\vr^{3\al-1}\right)\nonumber\\
\leq~& C\vr^{3\al-1},
\end{align}
where we have used \eqref{goal:1}, \eqref{goal:2}, \eqref{-4.1:2}, \eqref{estimate:DuL6:2}, \eqref{estimate:DuL6:3}, \eqref{estimate:DuL6:4}, \eqref{estimate:DuL6:5}, \eqref{estimate:D2theta:2} and \eqref{estimate:D2theta:3}. Inserting \eqref{344} into \eqref{estimate:D2theta}, we have
\begin{align}\label{345}
\|\D^2\theta\|_{L^2}^2\leq C\vr^{3\al}.
\end{align}
Combining \eqref{342} and \eqref{345}, we get
\begin{equation}\label{345.5}
\|\theta-1\|_{L^\infty}\leq C\|\D\theta\|_{L^2}^{\frac{1}{2}}\|\D^2\theta\|_{L^2}^{\frac{1}{2}}\leq C\vr^\al.
\end{equation}
Applying standard elliptic estimate to $(\ref{FCNS:2})_2$, one has
\begin{align}\label{346}
\|\D^2u\|_{L^2}^2\leq~&C\vr^{-2}\|\D u\|_{L^6}^2\|\D\rho\|_{L^2}^{\frac{2}{3}}\|\D\rho\|_{L^4}^{\frac{4}{3}}+C\vr^{-2\al}\left(\vr\|\sqrt{\rho}\dot{u}\|_{L^2}^2+\vr^2\|\D\theta\|_{L^2}^2+\|\D\rho\|_{L^2}^2\|\D\theta\|_{L^2}\|\D^2\theta\|_{L^2}\right)
\nonumber\\
\leq~&C\vr^2.
\end{align}

\textbf{Step 2}: Estimate of $u_t$ and $\theta_t$ in $L^2\left(0,T;H^1\right)$.

From \eqref{goal:1} and \eqref{goal:2}, one has
\begin{align}
\int_0^T\left(\|u_t+u\cdot\D u\|_{L^2}^2+\|\D u_t+\D(u\cdot\D u)\|_{L^2}^2\right)\dif t\leq C\vr^{\al-1}.
\end{align}
Hence, by \eqref{goal:1}, \eqref{goal:2}, \eqref{estimate:DuL6:2} and \eqref{estimate:DuL6:4}, we have
\begin{align}\label{347}
\int_0^T\|u_t\|_{H^1}^2\dif t\leq~& C\vr^{\al-1}+C\int_0^T\left(\|u\cdot\D u\|_{L^2}^2+\|\D u\|_{L^4}^4+\|u\D^2u\|_{L^2}^2\right)\dif t\nonumber\\
\leq~&C\vr^{\al-1}+C\int_0^T\left(\|\D u\|_{L^2}^3\|\D u\|_{L^6}+\|\D u\|_{L^2}\|\D u\|_{L^6}^3+\|\D u\|_{L^2}\|\D u\|_{L^6}\|\D^2u\|_{L^2}^2\right)\dif t\nonumber\\
\leq~&C\vr^{\al-1}+C\int_0^T\left(\|\D u\|_{L^2}^2+\|\D u\|_{L^6}^2+\|\D^2u\|_{L^2}^2\right)\dif t.
\end{align}
Applying standard elliptic estimate to $(\ref{FCNS:2})_2$, we get
\begin{align}\label{348}
\int_0^T\|\D^2u\|_{L^2}^2\dif t\leq~&C\vr^{-2\al}\int_0^T\left(\vr\|\sqrt{\rho}\dot{u}\|_{L^2}^2+\vr^2\|\D\theta\|_{L^2}^2+\|\D\rho\|_{L^2}^2\|\D\theta\|_{L^2}\|\D^2\theta\|_{L^2}\right)\dif t
\nonumber\\
&+C\vr^{-2}\int_0^T\|\D u\|_{L^6}^2\|\D\rho\|_{L^2}^{\frac{2}{3}}\|\D\rho\|_{L^4}^{\frac{4}{3}}\dif t\nonumber\\
\leq~&C\vr^{1-2\al}\int_0^T\|\sqrt{\rho}\dot{u}\|_{L^2}^2\dif t+C\vr^{2-2\al}\int_0^T\left(\|\D\theta\|_{L^2}^2+\|\D^2\theta\|_{L^2}^2\right)\dif t+C\vr^{-\frac{1}{3}}\int_0^T\|\D u\|_{L^6}^2\dif t\nonumber\\
\leq~&C\vr^{3-\al},
\end{align}
where we have used \eqref{goal:1}, \eqref{goal:2}, \eqref{goal:3}, \eqref{estimate:D2theta:2}, \eqref{estimate:D2theta:3}, \eqref{-4.1:2}, and \eqref{estimate:DuL6:5}. Inserting \eqref{-4.1:2}, \eqref{estimate:DuL6:5} and \eqref{348} into \eqref{347}, we obtain
\begin{align}\label{349}
\int_0^T\|u_t\|_{H^1}^2\dif t\leq C\vr^{\al-1}.
\end{align}
From \eqref{goal:1}, \eqref{goal:2} and \eqref{344}, we have
\begin{align}
\int_0^T\left(\|\theta_t+u\cdot\D \theta\|_{L^2}^2+\|\D \theta_t+\D(u\cdot\D \theta)\|_{L^2}^2\right)\dif t\leq C\vr^{3\al-1}.
\end{align}
Therefore, by \eqref{goal:1}, \eqref{goal:2}, \eqref{estimate:DuL6:2}, \eqref{-4.1:2}, \eqref{estimate:DuL6:4}, \eqref{estimate:D2theta:2} and \eqref{estimate:D2theta:3}, we have
\begin{align}\label{3410}
\int_0^T\|\theta_t\|_{H^1}^2\dif t\leq~& C\vr^{3\al-1}+C\int_0^T\left(\|u\cdot\D\theta\|_{L^2}^2+\|\D u\cdot\D\theta\|_{L^2}^2+\|u\D^2\theta\|_{L^2}^2\right)\dif t\nonumber\\
\leq~&C\vr^{3\al-1}+C\int_0^T\|\D u\|_{L^2}\|\D u\|_{L^6}\left(\|\D\theta\|_{L^2}^2+\|\D^2\theta\|_{L^2}^2\right)\dif t\nonumber\\
\leq~&C\vr^{3\al-1}+C\int_0^T\left(\|\D\theta\|_{L^2}^2+\|\D^2\theta\|_{L^2}^2\right)\dif t\nonumber\\
\leq~&C\vr^{3\al-1}.
\end{align}

\bigskip

\textbf{Step 3}: Estimate of $\|\D^2\rho\|_{L^2}^2$ and $\|\rho_t\|_{H^1}^2$.

By direct computation, we derive the equation of $\D^2\rho$ from \eqref{FCNS:2}$_1$ as follows:
\begin{align*}
&(\D^2 \rho)_t+2(\D u\cdot\D)\D\rho+(u\cdot\nabla)\D^2\rho+\D \rho\cdot\D^2 u+\D^2 \rho\text{div}u+2\D\rho\otimes\D\text{div}u+\rho\D^2\text{div}u=0.
\end{align*}  
Multiplying the above equation by $\D^2 \rho$ and integrating the resulting equation over $\R^3$, we obtain
\begin{align}\label{sec3.4:2}
&\frac{\dif}{\dif t}\|\D^2\rho\|_{L^2}^{2}+\vr^{1-\al}\int|\D^2\rho|^{2}\dif x\nonumber\\
\leq~&\vr^{1-\al}\|1-\theta\|_{L^\infty}\|\D^2 \rho\|_{L^2}^2+ C\|\D u\|_{L^\infty}\|\D^2\rho\|_{L^2}^{2}+ C\|\D^2u\|_{L^4}\|\D \rho\|_{L^4}\|\D^2 \rho\|_{L^2}+C\vr\|\D^2 G\|_{L^2}\|\D^2\rho\|_{L^2}\nonumber\\
&+C\vr^{2-\al}\|\D^2\theta\|_{L^2}\|\D^2\rho\|_{L^2}+ C\vr^{1-\al}\|\D\theta\|_{L^6}\|\D \rho\|_{L^3}\|\D^2 \rho\|_{L^2}+C\vr^{-\al}\|\D^2\rho\|_{L^2}\|\D\rho\|_{L^4}^2\left(1+\|\theta-1\|_{L^\infty}\right)\nonumber\\
\leq~& C\left(\vr^{1-\al}\|\D\theta\|_{L^2}^{\frac{1}{2}}\|\D^2\theta\|_{L^2}^{\frac{1}{2}}+\|\D u\|_{L^2}^\frac{1}{7}\|\D^2 u\|_{L^4}^\frac{6}{7}\right)\|\D^2\rho\|_{L^2}^2+ C\|\D^2 u\|_{L^4}\|\D \rho\|_{L^4}\|\D^2 \rho\|_{L^2}\nonumber\\
&+C\vr\|\D^2 G\|_{L^2}\|\D^2\rho\|_{L^2}+C\vr^{2-\al}\|\D^2 \theta\|_{L^2}\|\D^2\rho\|_{L^2}+C\vr^{1-\al}\|\D^2\theta\|_{L^2}\|\D \rho\|_{L^3}\|\D^2 \rho\|_{L^2}\nonumber\\
&+C\vr^{-\al}\|\D^2\rho\|_{L^2}\|\D\rho\|_{L^4}^2\left(1+\|\D\theta\|_{L^2}^{\frac{1}{2}}\|\D^2\theta\|_{L^2}^{\frac{1}{2}}\right)\nonumber\\
\leq~& C\left(\vr^{1-\al}\|\D\theta\|_{L^2}^{\frac{1}{2}}\|\D^2\theta\|_{L^2}^{\frac{1}{2}}+\|\D u\|_{L^2}^\frac{1}{7}\|\D^2 u\|_{L^4}^\frac{6}{7}\right)\|\D^2\rho\|_{L^2}^2+ C\|\D^2 u\|_{L^4}\|\D \rho\|_{L^4}\|\D^2\rho\|_{L^2}\nonumber\\
&+C\vr\|\D^2\rho\|_{L^2}\left(\vr^{-\al}\|\D {\rho}\|_{L^3}\|\D\dot{u}\|_{L^2}
+\vr^{1-\al}\|\D\dot{u}\|_{L^2} +\vr^{-2}\|\D \rho \|_{L^4}^2\|\D u\|_{L^\infty}\right)+C\vr^{2-\al}\|\D^2\theta\|_{L^2}\|\D^2\rho\|_{L^2}\nonumber\\
&+ C\vr^{1-\al}\|\D^2\theta\|_{L^2}\|\D \rho\|_{L^3}\|\D^2\rho\|_{L^2}+C\vr^{-\al}\|\D^2\rho\|_{L^2}\|\D\rho\|_{L^4}^2\left(1+\|\D\theta\|_{L^2}^{\frac{1}{2}}\|\D^2\theta\|_{L^2}^{\frac{1}{2}}\right)\nonumber\\
\leq~&\frac{1}{2}\vr^{1-\al}\|\D^2 \rho\|_{L^2}^2+C\left(\vr^{1-\al}\|\D\theta\|_{L^2}\|\D^2\theta\|_{L^2}+\vr^{\al-1}\|\D u\|_{L^2}^\frac{2}{7}\|\D^2 u\|_{L^4}^\frac{12}{7}\right)\|\D^2\rho\|_{L^2}^2+C\vr^{\al-1}\|\D^2 u\|_{L^4}^2\|\D \rho\|_{L^4}^2\nonumber\\
&+C\left(\vr^{1-\al}\|\D {\rho}\|_{L^3}^2\|\D\dot{u}\|_{L^2}^2
+\vr^{3-\al}\|\D\dot{u}\|_{L^2}^2+\vr^{\alpha-3}\|\D \rho \|_{L^4}^4\|\D u\|_{L^\infty}^2\right)+C\vr^{3-\al}\|\D^2 \theta\|_{L^2}^2\nonumber\\
&+ C\vr^{1-\al}\|\D^2\theta\|_{L^2}^2\|\D \rho\|_{L^3}^2+C\vr^{-1-\al}\|\D\rho\|_{L^4}^4\left(1+\|\D\theta\|_{L^2}\|\D^2\theta\|_{L^2}\right),
\end{align}
where we have used 
\begin{align}\label{sec3.4:3}
\|\D^2 G\|_{L^2}\leq C\|\nabla H\|_{L^2}
\leq~& \frac{C}{\vr^{\al}}\|\D {\rho}\|_{L^3}\|\D\dot{u}\|_{L^2}
+\frac{C}{\vr^{\al-1}}\|\D\dot{u}\|_{L^2} +\frac{C}{\vr^{\al}}\|\D {\rho}\|_{L^3}\|\D^2\theta\|_{L^2}+\frac{C}{\vr^{\al-1}}\|\D^2\theta\|_{L^2}\nonumber\\
&+\frac{C}{\vr^2}\|\D \rho \|_{L^4}^2\|\D u\|_{L^\infty}+\frac{C}{\vr}\|\D \rho \|_{L^4}\|\D^2 u\|_{L^4}+\frac{C}{\vr}\|\D u\|_{L^\infty}\|\D^2 \rho\|_{L^2}.
\end{align}
Integrating \eqref{sec3.4:2} over $(0,T)$, we have
\begin{align}\label{sec3.4:6}
&\|\D^2\rho\|_{L^2}^2+\frac{1}{2}\vr^{1-\al}\int_0^T\|\D^2\rho\|_{L^2}^2\dif t\nonumber\\
\leq~&\|\D^2\rho_0\|_{L^2}^2+C\int_0^{T}\left(\vr^{1-\al}\|\D\theta\|_{L^2}\|\D^2\theta\|_{L^2}+\vr^{\al-1}\|\D u\|_{L^2}^\frac{2}{7}\|\D^2 u\|_{L^4}^\frac{12}{7}\right)\|\D^2\rho\|_{L^2}^2\dif t\nonumber\\
&+C\int_0^{T}\left(\vr^{\al+\frac{1}{2}}\|\D^2 u\|_{L^4}^2+\vr^{3-\al}\|\D\dot{u}\|_{L^2}^2+\vr^{\alpha}\|\D u\|_{L^2}^\frac{2}{7}\|\D^2 u\|_{L^4}^\frac{12}{7}+\vr^{3-\al}\|\D^2 \theta\|_{L^2}^2\right)\dif t\nonumber\\
&+C\int_0^T\left(\vr^{\frac{1}{2}-\al}\|\D\rho\|_{L^4}^2+\vr^{2-\al}\|\D\theta\|_{L^2}\|\D^2\theta\|_{L^2}\right)\dif t\nonumber\\
\leq~&\|\D^2\rho_0\|_{L^2}^2+C\vr^4+C\int_0^{T}\left(\vr^{1-\al}\|\D\theta\|_{L^2}\|\D^2\theta\|_{L^2}+\vr^{\al-1}\|\D u\|_{L^2}^\frac{2}{7}\|\D^2 u\|_{L^4}^\frac{12}{7}\right)\|\D^2\rho\|_{L^2}^2\dif t.
\end{align}
Then, by using Gronwall's inequality and (\ref{l2-2-1}), we have
\begin{align}\label{sec3.4:7}
&\sup\limits_{t\in[0,T]}\|\D^2\rho\|_{L^2}^2+\vr^{1-\al}\int_0^T\|\D^2\rho\|_{L^2}^2\dif t\nonumber\\
\leq~& C\vr^{4}\exp\left\{C\int_0^{T}\left(\vr^{1-\al}\|\D\theta\|_{L^2}\|\D^2\theta\|_{L^2}+\vr^{\al-1}\|\D u\|_{L^2}^\frac{2}{7}\|\D^2 u\|_{L^4}^\frac{12}{7}\right)\dif t\right\}\nonumber\\
\leq~& C\vr^{4}\exp\left\{C\vr^{\frac{1}{7}}+C\vr^{\frac{13}{7}-\frac{3}{14}\al}\right\},
\end{align}
where we have used (\ref{g:3.4:1}), (\ref{D2u24:sigma}), (\ref{Lem:4:1}), (\ref{D2u24}), (\ref{g:3.5}), (\ref{-4.1:3}),(\ref{estimate:D2theta:3}), (\ref{estimate:D2theta:2}),(\ref{-4.1:2}) and (\ref{-4.1:3}).

By \eqref{FCNS:2}$_1$, we have
\begin{align}\label{sec3.4:8}
\|\rho_t\|_{L^2}\leq
C\|{u}\|_{L^\infty}\|\D\rho\|_{L^2}+C\vr\|\D u\|_{L^2}\leq C\vr,
\end{align}
and
\begin{align}\label{sec3.4:9}
\nabla\rho_t+(u\cdot\D)\D\rho+\nabla u^i\nabla\partial_i\rho+\nabla\rho\text{div}u+\rho\nabla\text{div}u=0.
\end{align}
Then \eqref{sec3.4:9} implies
\begin{align}\label{sec3.4:10}
\|\nabla\rho_t\|_{L^2}^2
\leq
C\|{u}\|_{L^\infty}^2\|\nabla^2\rho\|_{L^2}^2+C\|\nabla{u}\|_{L^2}^{\frac{1}{2}}\|\nabla{u}\|_{L^6}^{\frac{3}{2}}\|\nabla\rho\|_{L^4}+C\vr\|\nabla^2{u}\|_{L^2}^2\leq C\vr^{4}\exp\left\{C\vr^{\frac{1}{7}}+C\vr^{\frac{13}{7}-\frac{3}{14}\al}\right\}.
\end{align}
Combining \eqref{342}, \eqref{345}, \eqref{346}, \eqref{349}, \eqref{3410}, \eqref{sec3.4:7}, and \eqref{sec3.4:10}, we complete the proof.
\end{proof}

Finally, from Proposition \ref{p4.1} and Lemma \ref{Lem:10}, we can use a standard continuation argument to show that the local solution can be extended to be a global one.

\subsection{Large time behavior and lower bound of the temperature}

In this subsection, we give the large time behaviour of the strong solution $(\rho, u,\theta)$ in $H^2$ space. Thanks to the fact that $\D\theta$ tends to zero in $L^2$ as time goes to infinity, we are able to give a uniform lower bound of the temperature. Compared to the results in \cite{MR3744381}, we prove the large time behaviour of the second-order derivatives of strong solution, additionally, due to the uniform-in-time higher-order estimates in Subsection \ref{sec3.4}.



\begin{lemma}
Let $(\rho, u, \theta)$ be the global strong solution obtained in Theorem \ref{th}. Then, we have
\begin{equation}\label{decay1}
    \lim_{t\rightarrow \infty} \left(\|\D\rho\|_{H^1}+\|\nabla\theta\|_{H^1}+\|\nabla u\|_{H^1}\right)=0.
\end{equation}    
\end{lemma}

\begin{proof}
    From Lemma \ref{sky}, we have
\begin{equation}\label{1001}
    \int_0^\infty \left(\|\nabla u\|_{L^2}^2+\|\nabla \theta\|_{L^2}^2\right) \mathrm{d}t\leq C(\vr),
\end{equation}
where $C(\vr)$ denotes a generic positive constant that depends only on $\vr$, $\mu$, $\lambda$, $\kappa$, $R$, $K_i$ and the initial data.

It follows from \eqref{1001} and Lemma \ref{Lem:4} that
\begin{equation}\label{1002}
\int_0^\infty \left|\frac{\mathrm{d}}{\mathrm{d}t} \|\nabla u\|_{L^2}^2\right|\mathrm{d}t\leq\int_0^\infty \int|\nabla u\cdot \nabla u_t|\mathrm{d}x\mathrm{d}t\leq\int_0^\infty\|\D u\|_{L^2}\|\D u_t\|_{L^2}\dif t\leq C(\vr).
\end{equation}
Combining \eqref{1001} and \eqref{1002} gives that $\lim\limits_{t\rightarrow \infty} \|\nabla u\|_{L^2}=0$. Integrating \eqref{Lem:5:2} with respect to $t\in[0,\infty)$, we can obtain that 
\begin{equation}\label{1003}
\begin{split}
&\int_0^\infty \left|\frac{\mathrm{d}}{\mathrm{d}t}\|\nabla \theta\|_{L^2}^2\right| \mathrm{d}t\\
   \leq~ &C\left(\vr\int_0^\infty\|\D u\|_{L^2}^2\dif t+\vr\int_0^\infty\|\D u\|_{L^2}\|\D u\|_{L^6}\|\D\theta\|_{L^2}^2\dif t+\vr^{2\al-1}\int_0^\infty\|\D u\|_{L^2}\|\D u\|_{L^6}^3\dif t\right)\\
   \leq~ & C(\vr).
\end{split}
\end{equation}
Combining \eqref{1001} and \eqref{1003} gives that $\lim\limits_{t\rightarrow \infty} \|\nabla \theta\|_{L^2}=0$. Similarly, from \eqref{3.9:1}, \eqref{l2-1}, \eqref{sec3.4:2} and \eqref{sec3.4:7}, one has $\lim\limits_{t\rightarrow \infty} \| \nabla\rho\|_{H^1}=0$.

Now it remains to prove that $\lim\limits_{t\rightarrow \infty} (\|\nabla^2 u\|_{L^2}+\|\nabla^2 \theta\|_{L^2})=0.$ It follows from \eqref{g:3.3:1} and \eqref{Lem:4:1} that
\begin{equation}\label{L100}
    \int_0^\infty \|\sqrt{\rho}\dot{u}\|_{L^2}^2 \mathrm{d}t \leq C(\vr). 
\end{equation}
Besides, from \eqref{3.4.1}, we have
\begin{equation}
    \int_0^\infty \left|\frac{\mathrm{d}}{\mathrm{d} t} \|\sqrt{\rho}\dot{u}\|_{L^2}^2\right| \mathrm{d}t \leq C(\vr),
\end{equation}
which, along with \eqref{L100}, yields that $\lim\limits_{t\rightarrow \infty} \|\sqrt{\rho}\dot{u}\|_{L^2}^2 =0$. Similar to \eqref{D2u4}, we have
\begin{equation}
\|\D^2u\|_{L^2}\leq C\vr^{-\al}\left(\vr^{\frac{1}{2}}\|\sqrt{\rho}\dot{u}\|_{L^2}+\vr^{\al-1}\|\D\rho\D u\|_{L^2}+\|\D\rho\|_{L^2}\|\theta\|_{L^\infty}+\vr\|\D\theta\|_{L^2}\right),
\end{equation}
which implies that $\lim\limits_{t\rightarrow \infty} \|\nabla^2 u\|_{L^2}=0$.

It follows from \eqref{g:3.3:2} and \eqref{Lem:4:1} that
\begin{equation}\label{L101}
   \int_0^\infty \| \sqrt{\rho}\dot{\theta} \|_{L^2}^2 \mathrm{d}t \leq C(\vr).
\end{equation}
From \eqref{343}, we have
\begin{equation}
    \int_0^\infty \left|\frac{\mathrm{d}}{\mathrm{d} t} \|\sqrt{\rho}\dot{\theta}\|_{L^2}^2\right| \mathrm{d}t \leq C(\vr),
\end{equation}
which, along with \eqref{L101}, yields that $\lim\limits_{t\rightarrow \infty} \|\sqrt{\rho}\dot{\theta}\|_{L^2}^2 =0$. Then from \eqref{estimate:D2theta}, we have $\lim\limits_{t\rightarrow \infty} \|\nabla^2 \theta\|_{L^2}=0$. Thus we complete the proof of the lemma.
\end{proof}

The next lemma shows that the temperature has the uniform positive lower and upper bounds.
\begin{lemma}\label{lower-theta}
Let $(\rho,u,\theta)$ be a strong solution of \eqref{FCNS:2}--\eqref{data and far} obtained in Theorem \ref{th}. Then there exists positive constants $\overline{\Theta}$ and $\underline{\Theta}$ depending on $\mu$, $\lambda$, $\kappa$, $R$, $\vr$ and the initial data, such that for any $(x,t)\in\R^3\times[0, +\infty)$,
\begin{align}
\underline{\Theta}\leq\theta(x,t)\leq \overline{\Theta}.
\end{align}
\end{lemma}
\begin{proof}
The upper bound of the temperature follows directly from \eqref{-4.7}. Next, we divide the proof of lower bound into two steps. The first step is to prove that, for any $T_0>0$, there exists a positive constant $C(T_0)$ depending on $T_0$, $\mu$, $\lambda$, $\kappa$, $R$, $\vr$ and the initial data, such that for any $(x,t)\in\R^3\times[0, T_0]$,
\begin{align}\label{lower-finite}
\theta(x,t)\geq C(T_0).
\end{align}

Denote $\Theta=\frac{1}{\theta}-1$ and $\Theta_+=\max\{\Theta, 0\}$. Then from \eqref{FCNS:2}$_3$ we can derive the following inequality,
\begin{align}\label{eqn:Theta}
\rho\left(\Theta_t+u\cdot\D\Theta\right)-\kappa\Delta\Theta=~&-\frac{1}{\theta^2}\left(2\mu\rho^\al|\frD(u)|^2+\lambda\rho^\al(\div u)^2\right)-\frac{1}{\theta^3}|\D\theta|^2+R\rho(\Theta+1)\div u\nonumber\\
\leq~&R\rho(\Theta+1)\div u.
\end{align}
Multiplying $\Theta_+^{p-1}$ on both sides of \eqref{eqn:Theta}, and integrating the resulting equation over $\R^3$, we have
\begin{align}
&\frac{1}{p}\frac{\dif}{\dif t}\int\rho|\Theta_+|^p\dif x+\kappa(p-1)\int\Theta_+^{p-2}|\D\Theta_+|^2\dif x\nonumber\\
\leq~&C\left(\|\rho\div u\|_{L^\infty}\|\Theta_+\|_{L^p}+\|\rho\div u\|_{L^p}\right)\|\Theta_+\|_{L^p}^{p-1}\nonumber\\
\leq~& C\left(\|\rho\div u\|_{L^\infty}\|\rho^{\frac{1}{p}}\Theta_+\|_{L^p}+\|\rho\div u\|_{L^p}\right)\|\rho^{\frac{1}{p}}\Theta_+\|_{L^p}^{p-1},
\end{align}
for any $p\in[2,\infty)$, which implies 
\begin{equation}
\frac{\dif}{\dif t}\|\rho^{\frac{1}{p}}\Theta_+\|_{L^p}\leq C\vr\left(\|\div u\|_{L^\infty}\|\rho^{\frac{1}{p}}\Theta_+\|_{L^p}+\|\div u\|_{L^p}\right).
\end{equation}
By Gronwall's inequality and H\"older's inequality, we get
\begin{align*}
\|\rho^{\frac{1}{p}}\Theta_+\|_{L^p}\leq~&\big\|\rho_0^{\frac{1}{p}}\Theta_+(0)\big\|_{L^p}\exp\left\{C\vr\int_0^t\|\div u\|_{L^\infty}\dif s\right\}+C\vr\int_0^t\|\div u\|_{L^p}\dif s\nonumber\\
\leq~&\big\|\rho_0^{\frac{1}{p}}\Theta_+(0)\big\|_{L^2\cap L^\infty}\exp\left\{C\vr t^{\frac{1}{2}}\left(\int_0^\infty\|\D u\|_{L^\infty}^2\dif t\right)^{\frac{1}{2}}\right\}+C\vr t^{\frac{1}{2}}\left(\int_0^\infty\|\D u\|_{L^2\cap L^\infty}^2\dif t\right)^{\frac{1}{2}}.
\end{align*}

Noting that the above inequality holds for any $p\in[2,\infty)$, we can let $p\to +\infty$ to obtain the time-dependent upper bound for $\|\rho^{\frac{1}{p}}\Theta_+\|_{L^\infty}$.

Next, from \eqref{decay1}, we have
\begin{equation}\label{theta:6:decay}
\|\theta(t)-1\|_{L^\infty}^2\leq C\|\D\theta(t)\|_{L^2}\big\|\D^2\theta(t)\big\|_{L^2}\to 0, \quad \text{as} ~ t\to +\infty.
\end{equation}
Combining \eqref{lower-finite} and \eqref{theta:6:decay}, we obtain the uniform lower bound of $\theta$:
\begin{align}\label{lower-theta:uniform}
\theta(x,t)\geq \underline{\Theta}, \quad \text{for} ~ (x,t)\in\R^3\times[0,+\infty).
\end{align}
This completes the proof of Lemma \ref{lower-theta}.
\end{proof}

\section{$H^1$ Asymptotic Behavior}\label{sec4}

The aim of this section is to acquire the convergence rate of the global solution $(\rho, u, \theta)$ obtained in Theorem \ref{th} to the equilibrium. In this section, we use $C(\vr)$ and $\overline{C}_i(\vr) ~ (i\in\mathbb{N})$ to denote a generic positive constant that depends only on $\vr$, $\mu$, $\lambda$, $\kappa$, $R$, $K_i$ and the initial data. The following lemma is devoted to deriving a dissipation inequality of the global solution $(\rho, u, \theta)$, which will play a key role in Lemma \ref{decayprop}.
\begin{lemma}\label{Lem:4.1}
Let $(\rho, u, \theta)$ be the global strong solution obtained in Theorem \ref{th}. Then, we have
\begin{align}\label{Lem4.1:6-0}
&\frac{\dif}{\dif t}\bigg[\int\vr^{\alpha+1}\left(\frac{1}{2}\rho|u|^2+R(\rho\ln\rho-\rho-\rho\ln\vr+\vr)+\rho(\theta-\ln\theta-1)\right)\dif x\nonumber\\
&+\mu\int\rho^\al|\frD(u)|^2\dif x+\frac{\lambda}{2}\int\rho^\al(\div u)^2\dif x-R\int(\rho\theta-\vr)\div u\dif x+\frac{\kappa}{2}\vr^{\frac{5}{2}-\alpha}\|\D\theta\|_{L^2}^2
\nonumber\\
&+\|\D\rho\|_{L^2}^2+\vr^{-\alpha}\|\sqrt{\rho}\dot{u}\|_{L^2}^2+\vr^{-2\alpha}\|\sqrt{\rho}\dot{\theta}\|_{L^2}^2\bigg]+C^{-1}\Big(\vr^{2\al}\|\nabla u\|_{L^2}^2+\vr^{\al-1}\|\D\theta\|_{L^2}^2\nonumber\\
&+\|\sqrt{\rho}\dot{u}\|_{L^2}^2+\vr^{\frac{5}{2}-\alpha}\|\sqrt{\rho}\dot{\theta}\|_{L^2}^2+\vr^{1-\al}\|\D\rho\|_{L^2}^2+\|\D\dot u\|_{L^2}^2+\vr^{-2\alpha}\|\D\dot{\theta}\|_{L^2}^2\Big)\leq 0,
\end{align}
for any $t>1$.
\end{lemma}
\begin{proof}
\textbf{Step 1}: The dissipation inequality of $\|(\rho-\vr,\sqrt{\rho}u,\theta-1)\|_{L^2}$.

From (\ref{Lem3.1:1}) we have
\begin{align}\label{Lem4.1:1}
&\frac{\dif}{\dif t}\int\left(\frac{1}{2}\rho|u|^2+R(\rho\ln\rho-\rho-\rho\ln\vr+\vr)+\rho(\theta-\ln\theta-1)\right)\dif x\nonumber\\
&+\int\left(\frac{1}{\theta}\left(\lambda\rho^\al(\div u)^2+2\mu\rho^\al|\frD(u)|^2\right)+\frac{\kappa}{\theta^2} |\D\theta|^2\right)\dif x=0,
\end{align}
which, together with (\ref{3.5:1}), leads to
\begin{align}\label{Lem4.1:2}
&\frac{\dif}{\dif t}\int\left(\frac{1}{2}\rho|u|^2+R(\rho\ln\rho-\rho-\rho\ln\vr+\vr)+\rho(\theta-\ln\theta-1)\right)\dif x\nonumber\\
&+C\int|\nabla u|^2\dif x+C\vr^{-2\alpha}\int|\D\theta|^2\dif x\leq0.
\end{align}
Moreover, from Lemma \ref{Lem:4}, taking $\sigma=1$, we have

\textbf{Step 2}: The dissipation inequality of $\|\nabla u\|_{L^2}$.
\begin{align}\label{Lem:4.1:3}
\int\rho|\dot{u}|^2\dif x=~&-\int\dot{u}\cdot\D P\dif x-2\mu\int\rho^\al\D\dot{u}:\frD(u)\dif x-\lambda\int\rho^\al\div u\div\dot{u}\dif x\nonumber\\
\leq~&R\frac{\dif}{\dif t}\left(\int(\rho\theta-\vr)\div u\dif x\right)+C\vr^{\frac{1}{2}}\|\sqrt{\rho}\dot{\theta}\|_{L^2}\|\D u\|_{L^2}+C\vr\|\D u\|_{L^2}^{\frac{3}{2}}\|\D\theta\|_{L^2}\|\D u\|_{L^6}^{\frac{1}{2}}+C\vr\|\D u\|_{L^2}^2\nonumber\\
&-\mu\frac{\dif}{\dif t}\left(\int\rho^\al|\frD(u)|^2\dif x\right)+C\vr^\al\|\D u\|_{L^2}^{\frac{3}{2}}\|\D u\|_{L^6}^{\frac{3}{2}}-\frac{\lambda}{2}\frac{\dif}{\dif t}\left(\int\rho^\al(\div u)^2\dif x\right),
\end{align}
which leads to 
\begin{align}\label{Lem:4.1:4}
&\frac{\dif}{\dif t}\left(\mu\int\rho^\al|\frD(u)|^2\dif x+\frac{\lambda}{2}\int\rho^\al(\div u)^2\dif x-R\int(\rho\theta-\vr)\div u\dif x\right)+\int\rho|\dot{u}|^2\dif x\nonumber\\
\leq~&C\vr^{\frac{1}{2}}\|\sqrt{\rho}\dot{\theta}\|_{L^2}\|\D u\|_{L^2}+C\vr\|\D u\|_{L^2}^{\frac{3}{2}}\|\D\theta\|_{L^2}\|\D u\|_{L^6}^{\frac{1}{2}}+C\vr\|\D u\|_{L^2}^2+C\vr^\al\|\D u\|_{L^2}^{\frac{3}{2}}\|\D u\|_{L^6}^{\frac{3}{2}}\nonumber\\
\leq~& C\vr^{\al-1}\|\D u\|_{L^2}^2+ C\vr^{2-\al}\|\sqrt{\rho}\dot{\theta}\|_{L^2}^2+C\vr^{-3\al+7}\|\D\theta\|_{L^2}^4\|\D u\|_{L^6}^2+C\vr^{\al+3}\|\D u\|_{L^6}^{6}\nonumber\\
\leq~& C\vr^{\al-1}\|\D u\|_{L^2}^2+ C\vr^{2-\al}\|\sqrt{\rho}\dot{\theta}\|_{L^2}^2+C\vr^{-3\al+7}\|\D\theta\|_{L^2}^2\nonumber\\
&+C\vr^{\al+3}\left(\vr^{1-2\al}\|\sqrt{\rho}\dot{u}\|_{L^2}^2+\vr^{2-2\al}\|\D\theta\|_{L^2}^2+\vr^{-2\al}\|\nabla\rho\|_{L^2}^2+\vr^{-2}\|\D u\|_{L^2}^2\right)\nonumber\\
\leq~& C\vr^{4-\al}\|\sqrt{\rho}\dot{u}\|_{L^2}^2+C\left(\vr^{\al+1}\|\D u\|_{L^2}^2+\vr^{2-\al}\|\sqrt{\rho}\dot{\theta}\|_{L^2}^2+\vr^{5-\al}\|\D\theta\|_{L^2}^2+\vr^{3-\al}\|\D\rho\|_{L^2}^2\right),
\end{align}
where in the last inequality we have used (\ref{estimate:DuL6}) and for any $t>1$,
\begin{align*}
\|\D u\|_{L^6}\leq~& C\left(\vr^{\frac{1}{2}-\al}\|\sqrt{\rho}\dot{u}\|_{L^2}+\vr^{1-\al}\|\D\theta\|_{L^2}+\vr^{-\al}\|\nabla\rho\|_{L^2}+\vr^{-4}\|\D\rho\|_{L^4}^4\|\D u\|_{L^2}\right)\nonumber\\
\leq~& C\left(\vr^{\frac{1}{2}-\al}\|\sqrt{\rho}\dot{u}\|_{L^2}+\vr^{1-\al}\|\D\theta\|_{L^2}+\vr^{-\al}\|\nabla\rho\|_{L^2}+\vr^{-1}\|\D u\|_{L^2}\right)\nonumber\\
\leq~& C\vr^{-1}.
\end{align*}

\textbf{Step 3}: The dissipation inequality of $\|\nabla \theta\|_{L^2}$.

From (\ref{Lem:5:2}), we have
\begin{align}\label{Lem:4.1:5}
&\frac{\dif}{\dif t}\left(\frac{\kappa}{2}\int|\D\theta|^2\dif x\right)+\int\rho|\dot{\theta}|^2\dif x\nonumber\\
=~&\int\rho\dot{\theta}u\cdot\D\theta\dif x+\lambda\int\rho^\al\dot{\theta}(\div u)^2\dif x+2\mu\int\rho^\al\dot{\theta}|\frD(u)|^2\dif x-R\int\rho\theta\div u\dot{\theta}\dif x\nonumber\\
&+R\int\rho\theta u\cdot\D\theta\div u\dif x-\lambda\int\rho^\al u\cdot\D\theta(\div u)^2\dif x-2\mu\int\rho^\al u\cdot\D\theta|\frD(u)|^2\dif x\nonumber\\
\leq~&\frac{1}{4}\|\sqrt{\rho}\dot{\theta}\|_{L^2}^2+C\vr\left(\|\D \theta\|_{L^2}\|\D^2\theta\|_{L^2}\|\D u\|_{L^2}^2+\|\D u\|_{L^2}\|\D\theta\|_{L^2}^2\|\D u\|_{L^6}\right)+C\vr^{2\al-1}\|\D u\|_{L^2}\|\D u\|_{L^6}^3\nonumber\\
&+C\vr^{\al}\|\D u\|_{L^2}\|\D \theta\|_{L^2}\|\D u\|_{L^6}^2\nonumber\\
\leq~&\frac{1}{4}\|\sqrt{\rho}\dot{\theta}\|_{L^2}^2+C\left(\vr^{\al-2}\|\D\theta\|_{L^2}^2+\vr^{-1}\|\sqrt{\rho}\dot{u}\|_{L^2}^2+\vr^{-2}\|\nabla\rho\|_{L^2}^2+\vr^{2\alpha-4}\|\D u\|_{L^2}^2\right).
\end{align}

\textbf{Step 4}: The dissipation inequality of $\|\nabla \rho\|_{L^2}$.

From (\ref{l2-1}), we have
\begin{align}\label{Lem:4.1:6}
&\frac{\dif}{\dif t}\|\D\rho\|_{L^2}^2+\frac{1}{4}\vr^{1-\al}\|\D\rho\|_{L^2}^2\nonumber\\
\leq~&C\left(\vr^{1-\al}\|\D\theta\|_{L^2}^{\frac{1}{2}}\|\D^2\theta\|_{L^2}^{\frac{1}{2}}+\|\D u\|_{L^2}^\frac{1}{7}\|\D^2 u\|_{L^{4}}^\frac{6}{7}\right)\|\D\rho\|_{L^2}^2+C\vr^{2-\al}\|\sqrt{\rho}\dot{u}\|_{L^{2}}^2+C\vr^{3-\al}\|\D\theta\|_{L^{2}}^2\nonumber\\
\leq~&\frac{1}{16}\vr^{1-\al}\|\D\rho\|_{L^2}^2+C\left(\vr^{3-\al}\|\D\theta\|_{L^2}\|\D^2\theta\|_{L^2}+\vr^{\al+1}\|\D u\|_{L^2}^\frac{2}{7}\|\D^2 u\|_{L^{4}}^\frac{12}{7}\right)+C\vr^{2-\al}\|\sqrt{\rho}\dot{u}\|_{L^{2}}^2+C\vr^{3-\al}\|\D\theta\|_{L^{2}}^2\nonumber\\
\leq~&\frac{1}{16}\vr^{1-\al}\|\D\rho\|_{L^2}^2+C\vr^{-\al}\|\D^2\theta\|_{L^2}^2+C\vr^{2-\al}\|\sqrt{\rho}\dot{u}\|_{L^{2}}^2+C\vr^{6-\al}\|\D\theta\|_{L^{2}}^2+C\vr^{\al+1}\|\D u\|_{L^2}^\frac{2}{7}\bigg(\vr^{1-\al}\|\dot{u}\|_{L^4}\nonumber\\
&+\vr^{-7}\|\D\rho\|_{L^4}^7\|\D u\|_{L^2}+\vr^{-\al}\|\D\rho\|_{L^4}\|\D\theta\|_{L^2}^{\frac{1}{2}}\|\D^2\theta\|_{L^2}^{\frac{1}{2}}+\vr^{1-\al}\|\D\theta\|_{L^2}^{\frac{1}{4}}\|\D^2\theta\|_{L^2}^{\frac{3}{4}}\bigg)^\frac{12}{7}\nonumber\\
\leq~&\frac{1}{16}\vr^{1-\al}\|\D\rho\|_{L^2}^2+C\vr^{-\al}\|\D^2\theta\|_{L^2}^2+C\vr^{2-\al}\|\sqrt{\rho}\dot{u}\|_{L^{2}}^2+C\vr^{6-\al}\|\D\theta\|_{L^{2}}^2+C\vr^{\al+1}\bigg(\|\D u\|_{L^2}^2+\vr^{2-2\al}\|\dot{u}\|_{L^4}^2\nonumber\\
&+\vr^{-14}\|\D\rho\|_{L^4}^{14}\|\D u\|_{L^2}^2+\vr^{-2\al}\|\D\rho\|_{L^4}^2\|\D\theta\|_{L^2}\|\D^2\theta\|_{L^2}+\vr^{2-2\al}\|\D\theta\|_{L^2}^{\frac{1}{2}}\|\D^2\theta\|_{L^2}^{\frac{3}{2}}\bigg)\nonumber\\
\leq~&\frac{1}{16}\vr^{1-\al}\|\D\rho\|_{L^2}^2+C\left(\vr^{1-\al}\|\sqrt{\rho}\dot{\theta}\|_{L^2}^2+\vr^{\al-1}\|\D u\|_{L^6}^2+\vr^{1-\al}\|\D\theta\|_{L^2}^2
\right)\nonumber\\
&+C\vr^{3-\al}\|\sqrt{\rho}\dot{u}\|_{L^{2}}^2+C\vr^{\al+1}\|\D u\|_{L^2}^2+C\vr^{3-\al}\|\D \dot{u}\|_{L^2}^2\nonumber\\
\leq~&\frac{1}{16}\vr^{1-\al}\|\D\rho\|_{L^2}^2+C\left(\vr^{1-\al}\|\sqrt{\rho}\dot{\theta}\|_{L^2}^2+\vr^{-1-\al}\|\nabla\rho\|_{L^2}^2+\vr^{4-\al}\|\D\theta\|_{L^2}^2
\right)\nonumber\\
&+C\vr^{3-\al}\|\sqrt{\rho}\dot{u}\|_{L^{2}}^2+C\vr^{\al+1}\|\D u\|_{L^2}^2+C\vr^{3-\al}\|\D \dot{u}\|_{L^2}^2\nonumber\\
\leq~&\frac{1}{8}\vr^{1-\al}\|\D\rho\|_{L^2}^2+C\left(\vr^{1-\al}\|\sqrt{\rho}\dot{\theta}\|_{L^2}^2+\vr^{4-\al}\|\D\theta\|_{L^2}^2
+\vr^{3-\al}\|\sqrt{\rho}\dot{u}\|_{L^{2}}^2+\vr^{\al+1}\|\D u\|_{L^2}^2+\vr^{3-\al}\|\D \dot{u}\|_{L^2}^2\right),
\end{align}
where we have used (\ref{estimate:D2theta})
\begin{align}\label{2dt}
\|\D^2\theta\|_{L^2}^2 
&\leq C\left(\vr\|\sqrt{\rho}\dot{\theta}\|_{L^2}^2+\vr^{2\al-1}\|\D u\|_{L^6}^2+\vr\|\D\theta\|_{L^2}^2+\vr^2\|\D u\|_{L^2}^2\right).
\end{align}

Then, multiplying $\vr^{\alpha+1}$ on (\ref{Lem4.1:1}), multiplying $\vr^{\frac{5}{2}-\alpha}$ on (\ref{Lem:4.1:5}), and adding the resulting inequality to (\ref{Lem:4.1:4}), (\ref{Lem:4.1:6}) together, we have
\begin{align}\label{Lem4.1:6-1}
&\frac{\dif}{\dif t}\int\vr^{\alpha+1}\left(\frac{1}{2}\rho|u|^2+R(\rho\ln\rho-\rho-\rho\ln\vr+\vr)+\rho(\theta-\ln\theta-1)\right)\dif x\nonumber\\
&+\frac{\dif}{\dif t}\left(\mu\int\rho^\al|\frD(u)|^2\dif x+\frac{\lambda}{2}\int\rho^\al(\div u)^2\dif x-R\int(\rho\theta-\vr)\div u\dif x\right)+\frac{\dif}{\dif t}\left(\frac{\kappa}{2}\vr^{\frac{5}{2}-\alpha}\int|\D\theta|^2\dif x\right)
\nonumber\\
&+\frac{\dif}{\dif t}\|\D\rho\|_{L^2}^2+\frac{C}{2}\vr^{2\al}\int|\nabla u|^2\dif x+\frac{C}{2}\vr^{\al-1}\int|\D\theta|^2\dif x+\frac{1}{2}\int\rho|\dot{u}|^2\dif x+\frac{\vr^{\frac{5}{2}-\alpha}}{2}\int\rho|\dot{\theta}|^2\dif x+\frac{\vr^{1-\al}}{16}\|\D\rho\|_{L^2}^2\nonumber\\
\leq~& C\vr^{3-\al}\|\D \dot{u}\|_{L^2}^2.
\end{align}

\textbf{Step 5}: The dissipation inequality of $\|\sqrt{\rho}\dot u\|_{L^2}$.

From (\ref{e3.308}), we have
\begin{align}\label{Lem:4.1:7}
&\frac{\dif}{\dif t}\int\rho|\dot u|^2\dif x+\frac{\mu}{2}\left(\frac{\vr}{2}\right)^\al\|\D\dot u\|_{L^2}^2\nonumber\\
\leq~&C\left(\vr\|\sqrt{\rho}\dot\theta\|_{L^2}^2+\vr^2\|\D\theta\|_{L^2}^2\|\D u\|_{L^2}\|\D u\|_{L^6}+\vr^\al\|\D u\|_{L^2}\|\D u\|_{L^6}^3\right)\nonumber\\
\leq~&C\left(\vr\|\sqrt{\rho}\dot\theta\|_{L^2}^2+\vr\|\D\theta\|_{L^2}^2+\vr^{-3}\|\sqrt{\rho}\dot{u}\|_{L^2}^2+\vr^{-4}\|\nabla\rho\|_{L^2}^2+\vr^{\alpha-3}\|\D u\|_{L^2}^2\right),
\end{align}

\textbf{Step 6}: The dissipation inequality of $\|\sqrt{\rho}\dot \theta\|_{L^2}$.

From (\ref{3.5:5}), we have
\begin{align}\label{Lem:4.1:8}
&\frac{\dif}{\dif t}\int\rho|\dot{\theta}|^2\dif x+\kappa\int|\D\dot{\theta}|^2\dif x\nonumber\\
\leq~& C\|\D u\|_{L^2}\|\D u\|_{L^6}\|\D^2\theta\|_{L^2}^2+C\vr\|\D u\|_{L^2}\|\D u\|_{L^6}\|\sqrt{\rho}\dot{\theta}\|_{L^2}^2+C\vr^2\left(1+\|\D\theta\|_{L^2}\right)\|\D\theta\|_{L^2}\|\D\dot{u}\|_{L^2}^2\nonumber\\
&+C\vr^2\left(1+\|\D\theta\|_{L^2}\right)\|\D\theta\|_{L^2}\|\D u\|_{L^2}\|\D u\|_{L^6}^3+C\vr^{\frac{1}{2}}\|\sqrt{\rho}\dot{\theta}\|_{L^2}\|\D\dot{u}\|_{L^2}\nonumber\\
&+C\vr^{\frac{1}{2}}\|\sqrt{\rho}\dot{\theta}\|_{L^2}\|\D u\|_{L^2}^{\frac{1}{2}}\|\D u\|_{L^6}^{\frac{3}{2}}+C\vr^{2\al}\|\D u\|_{L^2}\|\D u\|_{L^6}\|\D\dot{u}\|_{L^2}^2+C\vr^{\al-\frac{1}{2}}\|\sqrt{\rho}\dot{\theta}\|_{L^2}\|\D u\|_{L^6}^3\nonumber\\
\leq~& C\vr^{-1}\|\D^2\theta\|_{L^2}^2+C\|\sqrt{\rho}\dot{\theta}\|_{L^2}^2+C\vr^{-2}\|\D\theta\|_{L^2}^2+C\|\D u\|_{L^2}^2+C\vr^{2\al-1}\|\D\dot{u}\|_{L^2}^2+C\vr^{2\al-5}\|\D u\|_{L^6}^2\nonumber\\
\leq~& C\vr^{2\al-2}\|\D u\|_{L^6}^2+C\|\D\theta\|_{L^2}^2+C\vr\|\D u\|_{L^2}^2+C\|\sqrt{\rho}\dot{\theta}\|_{L^2}^2+C\vr^{2\al-1}\|\D\dot{u}\|_{L^2}^2\nonumber\\
\leq~& C\vr^{2\al-2}\left(\vr^{1-2\al}\|\sqrt{\rho}\dot{u}\|_{L^2}^2+\vr^{2-2\al}\|\D\theta\|_{L^2}^2+\vr^{-2\al}\|\nabla\rho\|_{L^2}^2+\vr^{-2}\|\D u\|_{L^2}^2\right)+C\|\D\theta\|_{L^2}^2+C\vr\|\D u\|_{L^2}^2\nonumber\\
&+C\|\sqrt{\rho}\dot{\theta}\|_{L^2}^2+C\vr^{2\al-1}\|\D\dot{u}\|_{L^2}^2\nonumber\\
\leq~& C\vr^{-1}\|\sqrt{\rho}\dot{u}\|_{L^2}^2+C\left(\|\D\theta\|_{L^2}^2+\vr^{-2}\|\nabla\rho\|_{L^2}^2+\vr^{2\alpha-4}\|\D u\|_{L^2}^2+\|\sqrt{\rho}\dot{\theta}\|_{L^2}^2+\vr^{2\al-1}\|\D\dot{u}\|_{L^2}^2\right),
\end{align}



Then, multiplying $\vr^{-\alpha}$ on (\ref{Lem:4.1:7}), multiplying $\vr^{-2\alpha}$ on (\ref{Lem:4.1:8}), and adding the resulting inequality to (\ref{Lem4.1:6-1}), we have
\begin{align*}
&\frac{\dif}{\dif t}\int\vr^{\alpha+1}\left(\frac{1}{2}\rho|u|^2+R(\rho\ln\rho-\rho-\rho\ln\vr+\vr)+\rho(\theta-\ln\theta-1)\right)\dif x\nonumber\\
&+\frac{\dif}{\dif t}\left(\mu\int\rho^\al|\frD(u)|^2\dif x+\frac{\lambda}{2}\int\rho^\al(\div u)^2\dif x-R\int(\rho\theta-\vr)\div u\dif x\right)+\frac{\dif}{\dif t}\left(\frac{\kappa}{2}\vr^{\frac{5}{2}-\alpha}\int|\D\theta|^2\dif x\right)
\nonumber\\
&+\frac{\dif}{\dif t}\|\D\rho\|_{L^2}^2+\frac{\dif}{\dif t}\left(\vr^{-\alpha}\int\rho|\dot u|^2\dif x\right)+\frac{\dif}{\dif t}\left(\vr^{-2\alpha}\int\rho|\dot{\theta}|^2\dif x\right)\nonumber\\
&+\frac{C}{4}\vr^{2\al}\int|\nabla u|^2\dif x+\frac{C}{4}\vr^{\al-1}\int|\D\theta|^2\dif x+\frac{1}{4}\int\rho|\dot{u}|^2\dif x+\frac{1}{4}\vr^{\frac{5}{2}-\alpha}\int\rho|\dot{\theta}|^2\dif x+\frac{1}{32}\vr^{1-\al}\|\D\rho\|_{L^2}^2\nonumber\\
&+\frac{1}{4}C\|\D\dot u\|_{L^2}^2+\vr^{-2\alpha}\kappa\int|\D\dot{\theta}|^2\dif x\nonumber\\
\leq~& 0.
\end{align*}
Hence, we complete the proof of Lemma \ref{Lem:4.1}.
\end{proof}

The next lemma is concerned with the low frequency part of the solution.
\begin{lemma}\label{Lem4.2}
Let $(\rho, u, \theta)$ be the global strong solution obtained in Theorem \ref{th}. Moreover, if the initial data satisfy $\rho_0-\vr$, $\rho_0 u_0$, $\rho_0 (\theta_0-1)\in L^{p_0}(\R^3)$ for $1\leq {p_0}\leq 2$, we have
\begin{align}\label{fl2-0}
&\int_{S(t)} \left(R|\reallywidehat{\rho-\vr}(\xi,t)|^2+|\reallywidehat{\rho u}(\xi,t)|^2+ |\reallywidehat{\rho(\theta-1)}(\xi,t)|^2\right) \,\dif\xi\nonumber\\
&+\mu\vr^{\alpha+1}\int_0^t \int_{S(t)} |\xi|^2 |\hat{u}|^2 \,\dif\xi \dif s+(\mu+\lambda)\vr^{\alpha+1}\int_0^t \int_{S(t)} |\xi\cdot\hat{u}|^2 \,\dif\xi \dif s+\kappa\vr\int_0^t \int_{S(t)}|\xi|^2\big|\reallywidehat{\theta-1}\big|^2\,\dif\xi \dif s\nonumber\\
\leq~& C\big(\|\rho_0-\vr\|_{L^{p_0}}^2+ \|\rho_0 u_0\|_{L^{p_0}}^2+ \|\rho_0(\theta_0-1)\|_{L^{p_0}}^2\big)(1+t)^{-2\beta({p_0})}\nonumber\\
&+C(\vr)(1+t)^{-\frac 12}\int_0^t \left(\|\rho-\vr\|_{L^2}^2+\|\theta-1\|_{L^2}^2+\|\nabla u\|_{L^2}^2\right)\|\nabla u\|_{L^2}^2\,\dif s\nonumber\\
&+C(\vr)(1+t)^{-\frac 32}\int_0^t \left(\|\rho-\vr\|_{L^2}^4+\|u\|_{L^2}^4+\|\theta-1\|_{L^2}^4+\|\nabla \rho\|_{L^2}^4+\|\nabla u\|_{L^2}^4\right)\,\dif s
\end{align}
with $\displaystyle S(t)\overset{\text{def}}{=}\left\{\xi: |\xi|^2\leq \frac{\overline{C}_0(\vr)}{(1+t)}\right\}$, $\displaystyle\beta({p_0})=\frac{3}{4}\left(\frac{2}{p_0}-1\right)$, where the constant $\overline{C}_0(\vr)$ is to be determined later.
\end{lemma} 
\begin{proof}
We take the Fourier transform of  (\ref{FCNS}), and then
multiply $R\overline{\reallywidehat{\rho-\vr}}$ to the first equation, $\overline{\reallywidehat{\rho u}}$ to the second equation, and $\overline{\reallywidehat{\rho(\theta-1)}}$ to the third equation, respectively, we obtain
that
\begin{equation}\label{FT:1}
\begin{cases}
 \displaystyle\frac{R}{2} \left(|\reallywidehat{\rho-\vr}|^2\right)_t + Re\left(iR\xi \cdot \reallywidehat{\rho u} \overline{\reallywidehat{\rho-\vr}}\right)=0,\\[6pt]
\displaystyle\frac{1}{2}\left(|\reallywidehat{\rho u}|^2\right)_t+Re\left(\reallywidehat{\div(\rho u\otimes u)}+\reallywidehat{\D P}-\reallywidehat{\div\T} \right) \cdot \overline{\reallywidehat{\rho u}}=0,\\[6pt]
\displaystyle\frac{1}{2}\left(|\reallywidehat{\rho(\theta-1)}|^2\right)_t+Re\left(\reallywidehat{\div(\rho u(\theta-1))}+\reallywidehat{P\div u}-\reallywidehat{2\mu\rho^\al |\frD(u)|^2}-\reallywidehat{\lambda\rho^\al(\div u)^2}-\reallywidehat{\kappa\Delta\theta}\right)\overline{\reallywidehat{\rho(\theta-1)}}=0.
\end{cases}
\end{equation}
Then, integrating the above equations with $(0,t)\times S(t)$,  we get that
\begin{align}\label{fl2}
&\frac{1}{2}\int_{S(t)} \left(R|\reallywidehat{\rho-\vr}(\xi,t)|^2+|\reallywidehat{\rho u}(\xi,t)|^2+ |\reallywidehat{\rho(\theta-1)}(\xi,t)|^2\right) \,\dif\xi\nonumber\\
=~&\frac{1}{2}\int_{S(t)} \left(R|\reallywidehat{\rho-\vr}(\xi,0)|^2+|\reallywidehat{\rho u}(\xi,0)|^2+ |\reallywidehat{\rho(\theta-1)}(\xi,0)|^2\right) \,\dif\xi+Re\int_0^t \int_{S(t)}\bigg(-iR\xi \cdot \reallywidehat{\rho u} \overline{\reallywidehat{\rho-\vr}}\nonumber\\
&-\left(\reallywidehat{\div(\rho u\otimes u)}+\reallywidehat{\D P}-\reallywidehat{\div\T} \right) \cdot \overline{\reallywidehat{\rho u}}-\bigg(\reallywidehat{\div(\rho u)(\theta-1)}+\reallywidehat{\rho u\cdot\nabla\theta}+\reallywidehat{P\div u}-\reallywidehat{2\mu\rho^\al |\frD(u)|^2}\nonumber\\
&-\reallywidehat{\lambda\rho^\al(\div u)^2}-\reallywidehat{\kappa\Delta\theta}\bigg)\overline{\reallywidehat{\rho(\theta-1)}}\bigg) \,\dif\xi \dif s\nonumber\\
=~&\frac{1}{2}\int_{S(t)} \left(R|\reallywidehat{\rho-\vr}(\xi,0)|^2+|\reallywidehat{\rho u}(\xi,0)|^2+ |\reallywidehat{\rho(\theta-1)}(\xi,0)|^2\right) \,\dif\xi+\sum\limits_{i=1}^9{R}_i.
\end{align}
Next, we give estimates to the terms $R_i ~ (i=1,2,\cdots, 9)$. Noting that the following inequality
\begin{align*}
\big\|\widehat{f}(\xi)\big\|_{L^\infty(\mathbb{R}^n)}\leq\|f(x)\|_{L^1(\mathbb{R}^n)},
\end{align*}
holds for any $f\in L^1$, we get
\begin{align}\label{q1}
R_1+R_3=~&-Re\int_0^t \int_{S(t)}iR\xi \cdot \reallywidehat{\rho u} \overline{\reallywidehat{\rho-\vr}}\,\dif\xi \dif s-Re\int_0^t \int_{S(t)}\reallywidehat{\D P}\cdot \overline{\reallywidehat{\rho u}}\,\dif\xi \dif s\nonumber\\
=~&-Re\int_0^t \int_{S(t)}iR\xi \cdot \reallywidehat{\rho u} \overline{\reallywidehat{\rho-\vr}}\,\dif\xi \dif s-Re\int_0^t \int_{S(t)}iR\xi\reallywidehat{ (\rho\theta-\rho+\rho-\vr)}\cdot \overline{\reallywidehat{\rho u}}\,\dif\xi \dif s\nonumber\\
=~&-Re\int_0^t \int_{S(t)}iR\xi\reallywidehat{\rho(\theta-1)}\cdot \overline{\reallywidehat{(\rho-\vr+\vr) u}}\,\dif\xi \dif s\nonumber\\
\leq~& \delta\vr^2\int_0^t \int_{S(t)}|\xi|^2|\reallywidehat{u}|^2\,\dif\xi \dif s+\delta\vr\int_0^t \int_{S(t)}|\xi|^2|\reallywidehat{\theta-1}|^2\,\dif\xi \dif s+C\int_0^t \int_{S(t)}\big|\reallywidehat{(\rho-\vr)(\theta-1)}\big|^2\,\dif\xi \dif s\nonumber\\
&+C\int_0^t \int_{S(t)}\left(|\xi|^2+\vr\right)\big|{\reallywidehat{(\rho-\vr) u}}\big|^2\,\dif\xi \dif s-Re\int_0^t \int_{S(t)}i\xi R\vr^2\reallywidehat{(\theta-1)}\cdot \overline{\reallywidehat{u}}\,\dif\xi \dif s\nonumber\\
\leq~& \delta\vr^2\int_0^t \int_{S(t)}|\xi|^2|\reallywidehat{u}|^2\,\dif\xi \dif s+\delta\vr\int_0^t \int_{S(t)}|\xi|^2|\reallywidehat{\theta-1}|^2\,\dif\xi \dif s+C\int_0^t \|\reallywidehat{(\rho-\vr)(\theta-1)}\|_{L^\infty}^2\int_{S(t)}\,\dif\xi \dif s\nonumber\\
&+C\vr\int_0^t \|{\reallywidehat{(\rho-\vr) u}}\|_{L^\infty}^2\int_{S(t)}(|\xi|^2+1)\,\dif\xi \dif s-Re\int_0^t \int_{S(t)}i\xi R\vr^2\reallywidehat{(\theta-1)}\cdot \overline{\reallywidehat{u}}\,\dif\xi \dif s\nonumber\\
\leq~& \delta\vr^2\int_0^t \int_{S(t)}|\xi|^2|\reallywidehat{u}|^2\,\dif\xi \dif s+\delta\vr\int_0^t \int_{S(t)}|\xi|^2|\reallywidehat{\theta-1}|^2\,\dif\xi \dif s\\
&+C\vr(1+t)^{-\frac{3}{2}}\int_0^t \left(\|\rho-\vr\|_{L^2}^4+\| u\|_{L^2}^4+\|\theta-1\|_{L^2}^4\right)\,\dif s-Re\int_0^t \int_{S(t)}i\xi R\vr^2\reallywidehat{(\theta-1)}\cdot \overline{\reallywidehat{u}}\,\dif\xi \dif s,\nonumber
\end{align}
where $\delta>0$ is a small constant to be determined later. Next, we have
\begin{align}\label{q2}
	R_2=~&-Re\int_0^t \int_{S(t)}  \reallywidehat{\div(\rho u\otimes u)}\cdot (\overline{\reallywidehat{\vr u}}+\overline{\reallywidehat{(\rho-\vr)u}})\,\dif\xi \dif s\nonumber\\
	\leq~& \delta\vr^2\int_0^t \int_{S(t)} |\xi|^2 |\hat{u}|^2 \,\dif\xi \dif s  +C\int_0^t \int_{S(t)} \big| \reallywidehat{\rho u\otimes u} \big|^2 \,\dif\xi \dif s +\int_0^t \int_{S(t)} |\xi| \big| \reallywidehat{\rho u\otimes u} \big| \big|\reallywidehat{(\rho-\vr)u}\big| \,\dif\xi \dif s\nonumber\\
	 \leq~& \delta\vr^2\int_0^t \int_{S(t)} |\xi|^2 |\hat{u}|^2 \,\dif\xi \dif s +C \int_0^t \|\reallywidehat{\rho u\otimes u}\|_{L^\infty}^2 \int_{S(t)}\,\dif\xi \dif s\nonumber\\
  &+ C(1+t)^{-\frac 12}\int_0^t \|\reallywidehat{\rho u\otimes u}\|_{L^\infty} \|\reallywidehat{(\rho-\vr) u}\|_{L^\infty}\int_{S(t)}\,\dif\xi \dif s \nonumber\\
	 \leq~& \delta \vr^2\int_0^t \int_{S(t)}|\xi|^2 |\hat{u}|^2 \,\dif\xi \dif s+C\vr^2(1+t)^{-\frac 32}\int_0^t \|u\|_{L^2}^4 \dif s + C(1+t)^{-2}\vr\int_0^t \|u\|_{L^2}^3 \|\rho-\vr\|_{L^2}\,\dif s.
	\end{align}

 \begin{align}\label{q4}
	R_4=~&Re\int_0^t \int_{S(t)}  \reallywidehat{\div\T} \cdot \overline{\reallywidehat{\rho u}}\,\dif\xi \dif s=Re\int_0^t \int_{S(t)}  i\xi\cdot\left(\reallywidehat{2\tilde{\mu}(\rho)\frD(u)+\tilde{\lambda}(\rho
)\div u\mathbb{I}_3}\right) \cdot \overline{\reallywidehat{\rho u}}\,\dif\xi \dif s\nonumber\\
\leq~& - Re\int_0^t \int_{S(t)}\mu  |\xi|^2 {\reallywidehat{\rho^\alpha u}}\cdot \overline{\reallywidehat{\rho u}}\,\dif\xi \dif s- Re\int_0^t \int_{S(t)}(\mu+\lambda) i\left(\xi\cdot{\reallywidehat{\rho^\alpha u}}\right)\left(\xi \cdot\overline{\reallywidehat{\rho u}}\right)\dif\xi \dif s\nonumber\\
&+C\left|\int_0^t \int_{S(t)}  |\xi| {\reallywidehat{\rho^{\alpha-1}u \div \rho}}\cdot \overline{\reallywidehat{\rho u}}\,\dif\xi \dif s\right|\nonumber\\
\leq~& - Re\int_0^t \int_{S(t)} \mu |\xi|^2 \left({\reallywidehat{(\rho^\alpha-\vr^\alpha) u}}+{\reallywidehat{\vr^\alpha u}}\right)\cdot \left({\reallywidehat{(\rho-\vr) u}}+{\reallywidehat{\vr u}}\right)\,\dif\xi \dif s\nonumber\\
&- Re\int_0^t \int_{S(t)}(\mu+\lambda) \left(\xi \cdot\left({\reallywidehat{(\rho^\alpha-\vr^\alpha) u}}+{\reallywidehat{\vr^\alpha u}}\right)\right)\left(\xi\cdot \left({\reallywidehat{(\rho-\vr) u}}+{\reallywidehat{\vr u}}\right)\right)\,\dif\xi \dif s\nonumber\\
&+C\left|\int_0^t \int_{S(t)}  |\xi| {\reallywidehat{\rho^{\alpha-1}u \div \rho}}\cdot \left({\reallywidehat{(\rho-\vr) u}}+{\reallywidehat{\vr u}}\right)\,\dif\xi \dif s\right|\nonumber\\
\leq~& -\mu\vr^{\alpha+1}\int_0^t \int_{S(t)} |\xi|^2 |\hat{u}|^2 \,\dif\xi \dif s-(\mu+\lambda)\vr^{\alpha+1}\int_0^t \int_{S(t)} |\xi\cdot\hat{u}|^2 \,\dif\xi \dif s   \nonumber\\
&+C\int_0^t \int_{S(t)} |\xi|^2\big| {\reallywidehat{(\rho^\alpha-\vr^\alpha) u}}\big|\big|{\reallywidehat{(\rho-\vr) u}}\big| \,\dif\xi \dif s+C\int_0^t \int_{S(t)} |\xi|^2\big| {\reallywidehat{(\rho^\alpha-\vr^\alpha) u}}\big|\big|{\reallywidehat{\vr u}}\big| \,\dif\xi \dif s\nonumber\\
&+C\int_0^t \int_{S(t)} |\xi|^2\big| {\reallywidehat{\vr^\alpha u}}\big|\big|{\reallywidehat{(\rho-\vr) u}}\big| \,\dif\xi \dif s+C\int_0^t \int_{S(t)} |\xi|\big|{\reallywidehat{\rho^{\alpha-1}u \div \rho}}\big|\big|{\reallywidehat{(\rho-\vr) u}}\big| \,\dif\xi \dif s\nonumber\\
&+C\int_0^t \int_{S(t)} |\xi|\big|{\reallywidehat{\rho^{\alpha-1}u \div \rho}}\big|\big|{\reallywidehat{\vr u}}\big| \,\dif\xi \dif s\nonumber\\
	 \leq~& -\mu\vr^{\alpha+1}\int_0^t \int_{S(t)} |\xi|^2 |\hat{u}|^2 \,\dif\xi \dif s-(\mu+\lambda)\vr^{\alpha+1}\int_0^t \int_{S(t)} |\xi\cdot\hat{u}|^2 \,\dif\xi \dif s   \nonumber\\
  &+ C(1+t)^{-1}\int_0^t \|{\reallywidehat{(\rho^\alpha-\vr^\alpha) u}}\|_{L^\infty} \|\reallywidehat{(\rho-\vr) u}\|_{L^\infty}\int_{S(t)}\,\dif\xi \dif s+2\delta\vr^{\al+1}\int_0^t \int_{S(t)} |\xi|^2 |\hat{u}|^2 \,\dif\xi \dif s \nonumber\\
  &+ C\vr^{1-\al}(1+t)^{-1}\int_0^t \|{\reallywidehat{(\rho^\alpha-\vr^\alpha) u}}\|_{L^\infty}^2\int_{S(t)}\,\dif\xi \dif s+ C\vr^{\al-1}(1+t)^{-1}\int_0^t \|\reallywidehat{(\rho-\vr) u}\|_{L^\infty}^2\int_{S(t)}\,\dif\xi \dif s\nonumber\\
  &+C(1+t)^{-\frac{1}{2}}\int_0^t \|{\reallywidehat{\rho^{\alpha-1}u \div \rho}}\|_{L^\infty}\|\reallywidehat{(\rho-\vr) u}\|_{L^\infty}\int_{S(t)}  \,\dif\xi \dif s+C\vr^{1-\al}\int_0^t \|{\reallywidehat{\rho^{\alpha-1}u \div \rho}}\|_{L^\infty}^2\int_{S(t)}  \,\dif\xi \dif s\nonumber\\
	 \leq~& -\frac{\mu}{2}\vr^{\alpha+1}\int_0^t \int_{S(t)} |\xi|^2 |\hat{u}|^2 \,\dif\xi \dif s-(\mu+\lambda)\vr^{\alpha+1}\int_0^t \int_{S(t)} |\xi\cdot\hat{u}|^2 \,\dif\xi \dif s   \nonumber\\
  &+ C\vr^{\al-1}(1+t)^{-\frac{5}{2}}\int_0^t \|u\|_{L^2}^2 \|\rho-\vr\|_{L^2}^2\,\dif s+C\vr^{\al-1}(1+t)^{-2}\int_0^t \|u\|_{L^2}^2 \|\nabla \rho\|_{L^2}\| \rho-\vr\|_{L^2}\,\dif s\nonumber\\
  &+C\vr^{\al-1}(1+t)^{-\frac{3}{2}}\int_0^t \|u\|_{L^2}^2 \|\nabla \rho\|_{L^2}^2\,\dif s \nonumber\\
   \leq~& -\frac{\mu}{2}\vr^{\alpha+1}\int_0^t \int_{S(t)} |\xi|^2 |\hat{u}|^2 \,\dif\xi \dif s-(\mu+\lambda)\vr^{\alpha+1}\int_0^t \int_{S(t)} |\xi\cdot\hat{u}|^2 \,\dif\xi \dif s   \nonumber\\
  &+ C\vr^{\al-1}(1+t)^{-\frac{5}{2}}\int_0^t \|u\|_{L^2}^2 \|\rho-\vr\|_{L^2}^2\,\dif s+C\vr^{\al-1}(1+t)^{-\frac{3}{2}}\int_0^t \|u\|_{L^2}^2 \|\nabla \rho\|_{L^2}^2\,\dif s.
  \end{align}

  \begin{align}\label{q5}
	R_5=~&-Re\int_0^t \int_{S(t)}\reallywidehat{\div(\rho u(\theta-1))}\overline{\reallywidehat{\rho(\theta-1)}}\,\dif\xi \dif s\nonumber\\
 =~&-Re\int_0^t \int_{S(t)}i\xi\cdot\reallywidehat{\rho u(\theta-1)}\left(\overline{\reallywidehat{(\rho-\vr)(\theta-1)}}+\overline{\reallywidehat{\vr(\theta-1)}}\right)\,\dif\xi \dif s\nonumber\\
	\leq~& \delta\vr\int_0^t \int_{S(t)} |\xi|^2 |\reallywidehat{\theta-1}|^2 \,\dif\xi \dif s  +C\vr\int_0^t \int_{S(t)} \big|\reallywidehat{\rho u(\theta-1)}\big|^2 \,\dif\xi \dif s \nonumber\\
 &+\int_0^t \int_{S(t)} |\xi| \big|\reallywidehat{\rho u(\theta-1)}\big| \big|\overline{\reallywidehat{(\rho-\vr)(\theta-1)}}\big| \,\dif\xi \dif s\nonumber\\
	 \leq~& \delta\vr\int_0^t \int_{S(t)} |\xi|^2 |\reallywidehat{\theta-1}|^2 \,\dif\xi \dif s +C \vr\int_0^t \|\reallywidehat{\rho u(\theta-1)}\|_{L^\infty}^2 \int_{S(t)}\,\dif\xi \dif s\nonumber\\
  &+ C(1+t)^{-\frac 12}\int_0^t \|\reallywidehat{\rho u(\theta-1)}\|_{L^\infty} \|\overline{\reallywidehat{(\rho-\vr)(\theta-1)}}\|_{L^\infty}\int_{S(t)}\,\dif\xi \dif s \nonumber\\
	 \leq~& \delta \vr\int_0^t \int_{S(t)}|\xi|^2 |\hat{u}|^2 \,\dif\xi \dif s+C\vr^3(1+t)^{-\frac 32}\int_0^t \|u\|_{L^2}^2\|\theta-1\|_{L^2}^2 \dif s \nonumber\\
  &+ C\vr(1+t)^{-2}\int_0^t \|u\|_{L^2} \|\rho-\vr\|_{L^2} \|\theta-1\|_{L^2}^2\,\dif s.
	\end{align}

  \begin{align}\label{q7}
	R_6=~&-Re\int_0^t \int_{S(t)}\reallywidehat{P\div u}\overline{\reallywidehat{\rho(\theta-1)}}\,\dif\xi \dif s\nonumber\\
 =~&-Re\int_0^t \int_{S(t)}\left(\reallywidehat{(P-R\vr)\div u}+\reallywidehat{R\vr\div u}\right)\left(\overline{\reallywidehat{(\rho-\vr)(\theta-1)}}+\overline{\reallywidehat{\vr(\theta-1)}}\right)\,\dif\xi \dif s\nonumber\\
 =~&-Re\int_0^t \int_{S(t)}\left(\reallywidehat{R\rho(\theta-1)\div u}+\reallywidehat{R(\rho-\vr)\div u}+R\vr i\xi\cdot\reallywidehat{u}\right)\left(\overline{\reallywidehat{(\rho-\vr)(\theta-1)}}+\overline{\reallywidehat{\vr(\theta-1)}}\right)\,\dif\xi \dif s\nonumber\\
	\leq~& \delta\vr^2\int_0^t \int_{S(t)} |\xi|^2 |\reallywidehat{u}|^2 \,\dif\xi \dif s  +C\int_0^t \int_{S(t)} \big|\reallywidehat{(\rho-\vr)(\theta-1)}\big|^2 \,\dif\xi \dif s  -Re\int_0^t \int_{S(t)}R\vr^2 i\xi\cdot\reallywidehat{u}\overline{\reallywidehat{(\theta-1)}}\,\dif\xi \dif s \nonumber\\
 &+\int_0^t \int_{S(t)} \left(\big|\reallywidehat{R\rho(\theta-1)\div u}\big|+\big|\reallywidehat{R(\rho-\vr)\div u}\big|\right)\left(\big|{\reallywidehat{(\rho-\vr)(\theta-1)}}\big|+\big|{\reallywidehat{\vr(\theta-1)}}\big|\right)\,\dif\xi \dif s\nonumber\\
	 \leq~& \delta\vr^2\int_0^t \int_{S(t)} |\xi|^2 |\reallywidehat{u}|^2 \,\dif\xi \dif s+\delta\vr\int_0^t \int_{S(t)} |\xi|^2 |\reallywidehat{\theta-1}|^2 \,\dif\xi \dif s +C\int_0^t \|\reallywidehat{(\rho-\vr)(\theta-1)}\|_{L^\infty}^2 \int_{S(t)}\,\dif\xi \dif s\nonumber\\
  &-Re\int_0^t \int_{S(t)}R\vr^2 i\xi\cdot\reallywidehat{u}\overline{\reallywidehat{(\theta-1)}}\,\dif\xi \dif s \nonumber\\
  &+ C\int_0^t \left(\|\reallywidehat{\rho(\theta-1)\div u}\|_{L^\infty}+\|\reallywidehat{(\rho-\vr)\div u}\|_{L^\infty}\right)\|{\reallywidehat{(\rho-\vr)(\theta-1)}}\|_{L^\infty}\int_{S(t)}\,\dif\xi \dif s \nonumber\\
   &+ C\vr\int_0^t \left(\|\reallywidehat{\rho(\theta-1)\div u}\|_{L^\infty}^2+\|\reallywidehat{(\rho-\vr)\div u}\|_{L^\infty}^2\right)\int_{S(t)}|\xi|^{-2}\,\dif\xi \dif s \nonumber\\
	 \leq~& \delta \vr^2\int_0^t \int_{S(t)}|\xi|^2 |\hat{u}|^2 \,\dif\xi \dif s+\delta\vr\int_0^t \int_{S(t)} |\xi|^2 |\reallywidehat{\theta-1}|^2 \,\dif\xi \dif s+C(1+t)^{-\frac 32}\int_0^t \|\rho-\vr\|_{L^2}^2\|\theta-1\|_{L^2}^2 \dif s\nonumber\\
  &+ C(1+t)^{-\frac 32}\int_0^t \left( \vr\|\theta-1\|_{L^2}\|\nabla u\|_{L^2} +\|\rho-\vr\|_{L^2} \|\nabla u\|_{L^2}\right)\|\rho-\vr\|_{L^2} \|\theta-1\|_{L^2}\,\dif s\nonumber\\
  &+ C\vr(1+t)^{-\frac 12}\int_0^t \left( \vr^2\|\theta-1\|_{L^2}^2\|\nabla u\|_{L^2}^2+\|\rho-\vr\|_{L^2}^2\|\nabla u\|_{L^2}^2\right)\,\dif s-Re\int_0^t \int_{S(t)}R\vr^2 i\xi\cdot\reallywidehat{u}\overline{\reallywidehat{(\theta-1)}}\,\dif\xi \dif s \nonumber\\
    \leq~& \delta \vr^2\int_0^t \int_{S(t)}|\xi|^2 |\hat{u}|^2 \,\dif\xi \dif s+\delta\vr\int_0^t \int_{S(t)} |\xi|^2 |\reallywidehat{\theta-1}|^2 \,\dif\xi \dif s+C(1+t)^{-\frac 32}\int_0^t \|\rho-\vr\|_{L^2}^2\|\theta-1\|_{L^2}^2 \dif s\nonumber\\
  &+ C\vr(1+t)^{-\frac 12}\int_0^t \left( \vr^2\|\theta-1\|_{L^2}^2\|\nabla u\|_{L^2}^2+\|\rho-\vr\|_{L^2}^2\|\nabla u\|_{L^2}^2\right)\,\dif s-Re\int_0^t \int_{S(t)}R\vr^2 i\xi\cdot\reallywidehat{u}\overline{\reallywidehat{(\theta-1)}}\,\dif\xi \dif s,
	\end{align}
 where we have used 
 $$\int_{S(t)}|\xi|^{-2}\,\dif\xi\leq C(1+t)^{-\frac{1}{2}}.$$

 \begin{align}\label{q8}
	R_7+R_8=~&Re\int_0^t \int_{S(t)}\left(\reallywidehat{2\mu\rho^\al |\frD(u)|^2}
+\reallywidehat{\lambda\rho^\al(\div u)^2}\right)\overline{\reallywidehat{\rho(\theta-1)}}\,\dif\xi \dif s\nonumber\\
=~&Re\int_0^t \int_{S(t)}\left(\reallywidehat{2\mu\rho^\al |\frD(u)|^2}
+\reallywidehat{\lambda\rho^\al(\div u)^2}\right)\overline{\reallywidehat{(\rho-\vr+\vr)(\theta-1)}}\,\dif\xi \dif s\nonumber\\
\leq~& \delta\vr\int_0^t \int_{S(t)}|\xi|^2\big|\reallywidehat{\theta-1}\big|^2\,\dif\xi \dif s +C\vr\int_0^t \left(\|\reallywidehat{\rho^\al |\frD(u)|^2}\|_{L^\infty}^2+\|\reallywidehat{\rho^\al(\div u)^2}\|_{L^\infty}^2\right)\int_{S(t)}|\xi|^{-2}\,\dif\xi \dif s \nonumber\\
  &+C\int_0^t \left(\|\reallywidehat{\rho^\al |\frD(u)|^2}\|_{L^\infty}+\|\reallywidehat{\rho^\al(\div u)^2}\|_{L^\infty}\right)\|\reallywidehat{(\rho-\vr)(\theta-1)}\|_{L^\infty}\int_{S(t)}\,\dif\xi \dif s\nonumber\\
	 \leq~& \delta\vr\int_0^t \int_{S(t)}|\xi|^2\big|\reallywidehat{\theta-1}\big|^2\,\dif\xi \dif s+C\vr^{2\al+1}(1+t)^{-\frac 12}\int_0^t \|\nabla u\|_{L^2}^4 \dif s\nonumber\\
  &+C\vr^\al(1+t)^{-\frac 32}\int_0^t \|\nabla u\|_{L^2}^2 \|\theta-1\|_{L^2}\|\rho-\vr\|_{L^2} \dif s.
	\end{align}

  \begin{align}\label{q10}
	R_9=~& Re\int_0^t \int_{S(t)}\kappa\reallywidehat{\Delta\theta}\overline{\reallywidehat{\rho(\theta-1)}}\,\dif\xi \dif s=- Re\int_0^t \int_{S(t)}\kappa|\xi|^2\reallywidehat{\theta-1}\left(\overline{\reallywidehat{(\rho-\vr)(\theta-1)}}+\overline{\reallywidehat{\vr(\theta-1)}}\right)\,\dif\xi \dif s\nonumber\\
 =~&-Re\int_0^t \int_{S(t)}\kappa|\xi|^2\reallywidehat{\theta-1}\overline{\reallywidehat{(\rho-\vr)(\theta-1)}}\,\dif\xi \dif s-\kappa\vr\int_0^t \int_{S(t)}|\xi|^2\big|\reallywidehat{\theta-1}\big|^2\,\dif\xi \dif s\nonumber\\
 \leq~& C\int_0^t \int_{S(t)}|\xi|^2\big|{\reallywidehat{(\rho-\vr)(\theta-1)}}\big|^2\,\dif\xi \dif s-\frac{\kappa}{2}\vr\int_0^t \int_{S(t)}|\xi|^2\big|\reallywidehat{\theta-1}\big|^2\,\dif\xi \dif s\nonumber\\
	 \leq~& C(1+t)^{-1}\int_0^t \|{\reallywidehat{(\rho-\vr)(\theta-1)}}\|_{L^\infty}^2\int_{S(t)}\,\dif\xi \dif s -\frac{\kappa}{2}\vr\int_0^t \int_{S(t)}|\xi|^2\big|\reallywidehat{\theta-1}\big|^2\,\dif\xi \dif s\nonumber\\
	 \leq~& C(1+t)^{-\frac{5}{2}}\int_0^t \|\rho-\vr\|_{L^2}^2 \|\theta-1\|_{L^2}^2 \dif s-\frac{\kappa}{2}\vr\int_0^t \int_{S(t)}|\xi|^2\big|\reallywidehat{\theta-1}\big|^2\,\dif\xi \dif s.
\end{align}

Then, choosing $\delta$ small enough, combining (\ref{fl2}) and (\ref{q1})--(\ref{q10}), we have
\begin{align}\label{fl2-1}
&\int_{S(t)} \left(R|\reallywidehat{\rho-\vr}(\xi,t)|^2+|\reallywidehat{\rho u}(\xi,t)|^2+ |\reallywidehat{\rho(\theta-1)}(\xi,t)|^2\right) \,\dif\xi\nonumber\\
&+\mu\vr^{\alpha+1}\int_0^t \int_{S(t)} |\xi|^2 |\hat{u}|^2 \,\dif\xi \dif s+(\mu+\lambda)\vr^{\alpha+1}\int_0^t \int_{S(t)} |\xi\cdot\hat{u}|^2 \,\dif\xi \dif s+\kappa\vr\int_0^t \int_{S(t)}|\xi|^2\big|\reallywidehat{\theta-1}\big|^2\,\dif\xi \dif s\nonumber\\
\leq~& C\big(\|\rho_0-\vr\|_{L^{p_0}}^2+ \|\rho_0 u_0\|_{L^{p_0}}^2+ \|\rho_0(\theta_0-1)\|_{L^{p_0}}^2\big)(1+t)^{-2\beta({p_0})}\nonumber\\
&+ C\vr(1+t)^{-\frac 12}\int_0^t \left(\|\rho-\vr\|_{L^2}^2+\vr^2\|\theta-1\|_{L^2}^2+\vr^{2\al}\|\nabla u\|_{L^2}^2\right)\|\nabla u\|_{L^2}^2\,\dif s\nonumber\\
&+ C\vr^\al(1+t)^{-\frac 32}\int_0^t \left(\|\rho-\vr\|_{L^2}^4+\|u\|_{L^2}^4+\|\theta-1\|_{L^2}^4+\|\nabla \rho\|_{L^2}^4+\|\nabla u\|_{L^2}^4\right)\,\dif s,
\end{align}
where we have used 
\begin{align}
&\int_{S(t)} \left( \big|\reallywidehat{\rho-\vr}(\xi,0)\big|^2+\big|\reallywidehat{\rho u}(\xi,0)\big|^2 +\big|\reallywidehat{\rho(\theta-1)}(\xi,0)\big|^2 \right) \,\dif\xi\nonumber\\
 \leq~&  \left(\|\reallywidehat{\rho_0-\vr}\|_{L^{p_0'}}^2+ \|\reallywidehat{\rho_0 u_0}\|_{L^{p_0'}}^2+ \|\reallywidehat{\rho_0(\theta_0-1)}\|_{L^{p_0'}}^2\right) \left( \int_{S(t)}  \,\dif\xi\right)^{1-\frac 2{p_0'}}\nonumber\\
\leq~& C\left(\|\rho_0-\vr\|_{L^{p_0}}^2+\|\rho_0 u_0\|_{L^{p_0}}^2+\|\rho_0(\theta_0-1)\|_{L^{p_0}}^2\right)(1+t)^{-2\beta({p_0})},
\end{align}
due to $\rho_0-\vr$, $\rho_0 u_0$, $\rho_0 (\theta_0-1)$ belong to $L^{p_0}(\R^3)$ for $\displaystyle1\leq {p_0}\leq 2$ and 
 $\displaystyle \frac 1{p_0}+\frac 1{p_0'}=1$.
\end{proof}

Now we are in a position to prove the following convergence rate of the global large strong solution.
\begin{lemma}\label{decayprop}  Let $(\rho, u, \theta)$ be the global strong solution of (\ref{FCNS}) with initial data $(\rho_0, u_0, \theta_0)$ obtained in Theorem \ref{th}. Suppose that $(\rho_0-\vr, u_0, \theta_0-1)\in L^{p_0}(\R^3)$ with $ p_0\in [1,2]$. Then we have
\begin{equation}\label{decayest2}
\|\rho-\vr\|_{H^1}+\|u\|_{H^1} + \|\theta-1\|_{H^1}+\|\dot{u}\|_{L^2}+\|\dot{\theta}\|_{L^2}\leq C(\vr)(1+t)^{-\beta({p_0})},
\end{equation}
where $\displaystyle\beta({p_0})=\frac 34\left(\frac 2{p_0}-1\right)$.
\end{lemma}
\begin{proof} We separate the proof into several steps.
	
\textbf{Step 1}: From (\ref{fl2-0}) and the fact that $(\rho-\vr, u,\theta-1)$ belong to $L^\infty(0,\infty;H^1)$, $\nabla u$ belongs to $L^2(0,\infty;L^2)$,  we have
\begin{align}
&\int_{S(t)} \left(R|\reallywidehat{\rho-\vr}(\xi,t)|^2+|\reallywidehat{\rho u}(\xi,t)|^2+ |\reallywidehat{\rho(\theta-1)}(\xi,t)|^2\right) \,\dif\xi\nonumber\\
  \leq~& C\big(\|\rho_0-\vr\|_{L^{p_0}}^2+ \|\rho_0 u_0\|_{L^{p_0}}^2+ \|\rho_0(\theta_0-1)\|_{L^{p_0}}^2\big)(1+t)^{-2\beta({p_0})}+ \bar{C}(1+t)^{-\frac 12}\nonumber\\
  \leq~& C(\vr)(1+t)^{-r_1},
\end{align}
where $\displaystyle  r_1=\min\left\{2\beta({p_0}),\frac 12\right\}$. Then, we have
\begin{align}
\vr^2\int_{S(t)}  |\reallywidehat{  u}(\xi,t)|^2  \,\dif\xi \leq~& \int_{S(t)}   |\reallywidehat{\rho u}(\xi,t)|^2  \,\dif\xi+\int_{S(t)} |\reallywidehat{(\rho-\vr) u}(\xi,t)|^2   \,\dif\xi\nonumber\\
\leq~& C(1+t)^{-r_1}+C(1+t)^{-\frac 32}\|\reallywidehat{(\rho-\vr) u}(\xi,t)\|_{L^\infty}^2\nonumber\\
\leq~& C(1+t)^{-r_1},
\end{align}
and 
\begin{align}
\vr^2\int_{S(t)}  |\reallywidehat{\theta-1}(\xi,t)|^2  \,\dif\xi 
\leq~& \int_{S(t)}   |\reallywidehat{\rho(\theta-1)}(\xi,t)|^2  \,\dif\xi+\int_{S(t)} |\reallywidehat{(\rho-\vr)(\theta-1)}(\xi,t)|^2   \,\dif\xi\nonumber\\
\leq~& C(\vr)(1+t)^{-r_1}+C(1+t)^{-\frac 32}\|\reallywidehat{(\rho-\vr)(\theta-1)}(\xi,t)\|_{L^\infty}^2\nonumber\\
\leq~& C(\vr)(1+t)^{-r_1}.
\end{align}

Next, because of $\rho \dot u=-\D P+2\mu\div(\rho^\al\frD(u)) +\lambda\D(\rho^\al\div u)$,  we can obtain
\begin{align}
\int_{S(t)}  |\reallywidehat{  \rho \dot {u} }(\xi,t)|^2  \,\dif\xi 
\leq~&  \int_{S(t)}  \left|\reallywidehat{-\D P}+2\mu\reallywidehat{\div(\rho^\al\frD(u))}+  \lambda\reallywidehat{\D(\rho^\al\div u)}(\xi,t)\right|^2  \,\dif\xi \nonumber\\
\leq~&  C\int_{S(t)} \left( |\xi|^2|\reallywidehat{\rho\theta-\vr}|^2+|\xi|^2|\reallywidehat{\rho^\al\frD(u)}|^2+  |\xi|^2|\reallywidehat{\rho^\al\div u}|^2\right) (\xi,t) \,\dif\xi \nonumber\\
\leq~&  C(1+t)^{-1}\int_{S(t)} \left( |\reallywidehat{\rho\theta-\rho}|^2+|\reallywidehat{\rho-\vr}|^2+|\reallywidehat{\rho^\al\frD(u)}|^2+  |\reallywidehat{\rho^\al\div u}|^2\right) (\xi,t) \,\dif\xi \nonumber\\
\leq~&  C(\vr)(1+t)^{-1-r_1},
\end{align}
which implies that
$$
\int_{S(t)}|\reallywidehat{\dot{u}}(\xi,t)|^2\,\dif\xi\leq C(\vr)(1+t)^{-1-r_1}.
$$

At last, noting \eqref{FCNS:2}$_3$ and using the same argument as above, we get
\begin{align}
&\int_{S(t)}|\reallywidehat{  \rho\dot\theta }(\xi,t)|^2  \,\dif\xi  \nonumber\\
\leq~&  \int_{S(t)}  \left|\reallywidehat{-P\div u}+\reallywidehat{2\mu\rho^\al |\frD(u)|^2}+\reallywidehat{\lambda\rho^\al(\div u)^2}+\reallywidehat{\kappa\Delta\theta}(\xi,t)\right|^2  \,\dif\xi \nonumber\\
\leq~&  C\int_{S(t)} \Big(\left|\reallywidehat{R\rho(\theta-1)\div u}\right|^2+\left|\reallywidehat{R(\rho-\vr)\div u}\right|^2+\left|\reallywidehat{R\vr\div u}\right|^2+\left|\reallywidehat{\rho^\al|\frD(u)|^2}\right|^2+ \left|\reallywidehat{\rho^\al|\div u|^2}\right|+|\xi|^4|\reallywidehat{\theta-1}|^2\Big)  \,\dif\xi \nonumber\\
\leq~& C(\vr)(1+t)^{-\frac{3}{2}}+C\int_{S(t)}|\xi|^2|\hat{u} (\xi,t)|^2 \,\dif\xi ,
\end{align}
which implies that
$$
\int_{S(t)}|\reallywidehat{\dot{\theta}}(\xi,t)|^2\,\dif\xi\leq C(\vr)(1+t)^{-1-r_1}.
$$

Now, we denote
\begin{align}
X(t):=~&\int\vr^{\alpha+1}\left(\frac{1}{2}\rho|u|^2+R(\rho\ln\rho-\rho-\rho\ln\vr+\vr)+\rho(\theta-\ln\theta-1)\right)\dif x\nonumber\\
&+\mu\int\rho^\al|\frD(u)|^2\dif x+\frac{\lambda}{2}\int\rho^\al(\div u)^2\dif x-R\int(\rho\theta-\vr)\div u\dif x+\frac{\kappa}{2}\vr^{\frac{5}{2}-\alpha}\|\D\theta\|_{L^2}^2
\nonumber\\
&+\|\D\rho\|_{L^2}^2+\vr^{-\alpha}\|\sqrt{\rho}\dot{u}\|_{L^2}^2+\vr^{-2\alpha}\|\sqrt{\rho}\dot{\theta}\|_{L^2}^2.
\end{align}
From Young's inequality, \eqref{lower-theta:uniform} and the fact that $\vr$ is large enough, we have
\begin{equation}\label{X:sim}
\overline{C}_1^{-1}(\vr)\left(\|(\rho-\vr, u, \theta-1)\|_{H^1}^2+\|(\dot{u}, \dot{\theta})\|_{L^2}^2\right)\leq X(t)\leq\overline{C}_1(\vr)\left(\|(\rho-\vr, u, \theta-1)\|_{H^1}^2+\|(\dot{u}, \dot{\theta})\|_{L^2}^2\right),
\end{equation}
for some constant $\overline{C}_1(\vr)>0$. From the dissipation inequality (\ref{Lem4.1:6-0}), we have

\begin{equation}\label{rewrite}
\frac{\dif}{\dif t}X(t)+\overline{C}_2^{-1}(\vr)\left(\|\D u\|_{L^2}^2+\|\D\theta\|_{L^2}^2+\|\D\rho\|_{L^2}^2+\|\D\dot{u}\|_{L^2}^2+\|\D\dot{\theta}\|_{L^2}^2\right)\leq 0,
\end{equation}
for some constant $\overline{C}_2(\vr)>0$.

Recalling that $S(t)$ is the ball in $\mathbb{R}^3$ centered at the origin with radius $\displaystyle r(t)=\left(\frac{\overline{C}_0(\vr)}{(1+t)}\right)^{\frac{1}{2}}$, a decomposition of the frequency domain into two time-dependent subdomains by $S(t)^{c}$ and $S(t)$ yields that  
\begin{align}\label{decayeq9}
&\frac{\dif}{\dif t} X(t)+ \frac{\overline{C}_0(\vr)\overline{C}_2^{-1}(\vr)}{1+t} \left(\|u\|_{L^2}^2+\|\theta-1\|_{L^2}^2+\|\rho-\vr\|_{L^2}^2+\|\dot{u}\|_{L^2}^2+\|\dot{\theta}\|_{L^2}^2\right)\nonumber\\
\leq~& \frac{C(\vr)}{1+t}\int_{S(t)}\left( |\reallywidehat{\rho-\vr}(\xi,t)|^2+|\reallywidehat{u}(\xi,t)|^2+|\reallywidehat{\theta-1}(\xi,t)|^2+|\reallywidehat{ \dot{u}}(\xi,t)|^2+|\reallywidehat{ \dot{\theta}}(\xi,t)|^2 \right) \,\dif\xi.
\end{align}
Taking $\overline{C}_0(\vr)=2\overline{C}_1(\vr)\overline{C}_2(\vr)$ and adding \eqref{rewrite} to \eqref{decayeq9}, from \eqref{X:sim} we get
\begin{align}\label{decay:key}
\frac{\dif}{\dif t} X(t)+ \frac{1}{1+t}X(t)\leq~& \frac{C(\vr)}{1+t}\int_{S(t)}\left( |\reallywidehat{\rho-\vr}(\xi,t)|^2+|\reallywidehat{u}(\xi,t)|^2+|\reallywidehat{\theta-1}(\xi,t)|^2+|\reallywidehat{ \dot{u}}(\xi,t)|^2+|\reallywidehat{ \dot{\theta}}(\xi,t)|^2 \right) \,\dif\xi\nonumber\\
\leq~& C(\vr)(1+t)^{-1-r_1}, \qquad\text{for}\quad t\geq\overline{C}_0(\vr).
\end{align}
Multiplying by the integrating factor $(1+t)$ gives
\begin{align}
\frac{\dif}{\dif t}\left[(1+t)X(t)\right]\leq C(\vr)(1+t)^{-r_1},\qquad\text{for}\quad t\geq\overline{C}_0(\vr),
\end{align}
which leads to
\begin{equation}\label{informdecay}
X(t)\leq C(\vr)(1+t)^{-r_1},
\end{equation}
and 
\begin{equation}\label{decayeq10}
\|\rho-\vr\|_{L^2}+\|u\|_{L^2}+ \|\theta-1\|_{L^2}+\|\nabla \rho\|_{L^2}+\|\nabla u\|_{L^2}\leq C(\vr)(1+t)^{-r_1/2}.
\end{equation}

\textbf{Step 2}: We want to improve the decay estimate if $\beta(p_0)>\frac 14$. By definition, $r_1=\frac 12$. Thanks to \eqref{fl2-0} and \eqref{decayeq10}, we improve the estimate for the low frequency part as follows
\begin{align}
&\int_{S(t)}\left(|\reallywidehat{\rho-\vr}(\xi,t)|^2+|\reallywidehat{\rho u}(\xi,t)|^2+ |\reallywidehat{\rho(\theta-1)}(\xi,t)|^2\right) \,\dif\xi\nonumber\\
\leq~&  C(\vr)\left((1+t)^{-2\beta({p_0})} + (1+t)^{-1}+ (1+t)^{-\frac32}\log(1+t)\right).
\end{align}
Now following the similar argument used in the previous step, we conclude that
\begin{align}
&\int_{S(t)} \left(|\reallywidehat{\rho-\vr}(\xi,t)|^2+|\reallywidehat{u}(\xi,t)|^2+|\reallywidehat{\theta-1}(\xi,t)|^2+|\reallywidehat{\dot{u}}(\xi,t)|^2+|\reallywidehat{\dot{\theta}}(\xi,t)|^2 \right) \,\dif\xi \nonumber\\
\leq~&  C(\vr)\left((1+t)^{-2\beta({p_0})} + (1+t)^{-1}+ (1+t)^{-\frac32}\log(1+t)\right), 
\end{align}
which implies that
\begin{equation}
\frac{\dif}{\dif t} X(t)+ \frac{1}{1+t} X(t)\leq C(\vr)(1+t)^{-1}\left((1+t)^{-2\beta({p_0})} + (1+t)^{-1}+ (1+t)^{-\frac32}\log(1+t)\right).
\end{equation}
We obtain that 
\begin{equation}\label{decayeq11}
X(t)\leq C(\vr)\max\left\{(1+t)^{-2\beta({p_0})},  (1+t)^{-1}\right\}=C(\vr)(1+t)^{-r_2},
\end{equation}
with $r_2=\min\{2\beta(p_0), 1\}$, and 
$$\|\rho-\vr\|_{L^2}+\|u\|_{L^2}+ \|\theta-1\|_{L^2}+\|\nabla \rho\|_{L^2}+\|\nabla u\|_{L^2} \leq C(\vr)(1+t)^{-r_2/2}.$$

\textbf{Step 3}: Finally we deal with the case that $\beta(p_0)>\frac 12$. By \eqref{decayeq11}, we have 
\begin{equation}\label{decayeq10-3}
\|\rho-\vr\|_{L^2}+\|u\|_{L^2}+ \|\theta-1\|_{L^2}+\|\nabla \rho\|_{L^2}+\|\nabla u\|_{L^2}  \leq C(\vr)(1+t)^{-\frac 12}.
\end{equation}
We may repeat the same process in the above to get that
\begin{align}
&\int_{S(t)} \left(|\reallywidehat{\rho-\vr}(\xi,t)|^2+|\reallywidehat{u}(\xi,t)|^2+|\reallywidehat{\theta-1}(\xi,t)|^2+|\reallywidehat{\dot{u}}(\xi,t)|^2+|\reallywidehat{\dot{\theta}}(\xi,t)|^2 \right) \,\dif\xi \nonumber\\
\leq~&  C(\vr)\left((1+t)^{-2\beta({p_0})} +(1+t)^{-\frac 32}\right), 
\end{align}
which implies that
\begin{align}
\frac{\dif}{\dif t}X(t)+\frac{1}{1+t} X(t)\leq C(\vr)(1+t)^{-1}(1+t)^{-2\beta({p_0})}.
\end{align}
It is enough to derive \eqref{decayest2}. We ends the proof to the lemma.
\end{proof}

\bigskip

\noindent {\bf Acknowledgments}\\
The research of this work was supported in part by the National Natural Science Foundation
of China under grants 12371221, 12301277, 12161141004 and 11831011. This work was also partially supported by
the Fundamental Research Funds for the Central Universities and Shanghai Frontiers Science
Center of Modern Analysis.

\bigskip 
 
\noindent{\bf Data Availability Statements}\\ 
Data sharing not applicable to this article as no datasets were generated or analysed during the current study.

\bigskip

\noindent{\bf Conflict of interests}\\
The authors declare that they have no competing interests.

\bigskip

\noindent{\bf Authors' contributions}\\
The authors have made the same contribution. All authors read and approved the final manuscript.

\bigskip

\bibliographystyle{plain}

\end{document}